\theoremstyle{definition}
\newtheorem{definition}{Definition}
\theoremstyle{plain}
\newtheorem{theorem}{Theorem}
\newtheorem{lemma}{Lemma}
\newtheorem{proposition}{Proposition}
\newtheorem{corollary}{Corollary}
\renewcommand*\env@matrix[1][*\c@MaxMatrixCols c]{
  \hskip -\arraycolsep
  \let\@ifnextchar\new@ifnextchar
  \array{#1}}
  \renewenvironment{abstract}{%
    \small
    \paragraph{\abstractname}
  }{\par\bigskip}
\title{H-invariance theory:\\ A complete characterization of minimax optimal fixed-point algorithms}
\author{TaeHo Yoon\textsuperscript{$\ast$} \\ \texttt{tyoon7@jhu.edu} \and Ernest K.\ Ryu\textsuperscript{$\dagger$} \\ \texttt{eryu@math.ucla.edu} \and  Benjamin Grimmer\textsuperscript{$\ast$}\\ \texttt{grimmer@jhu.edu}}
\date{\textsuperscript{$\ast$}Department of Applied Mathematics and Statistics, Johns Hopkins University \smallskip \\ \textsuperscript{$\dagger$}Department of Mathematics, University of California, Los Angeles}
\newcommand{\JA}{\opJ_{\opA}}
\DeclareMathOperator{\spann}{span}
\newcommand{\cut}[1]{{}}
\newcommand{\vdelta}{{\boldsymbol{\delta}}}
\newcommand{\ve}{{\mathbf{e}}}
\newcommand{\vg}{{\mathbf{g}}}
\newcommand{\vx}{{\mathbf{x}}}
\newcommand{\vy}{{\mathbf{y}}}
\newcommand{\vA}{{\mathbf{A}}}
\newcommand{\vB}{{\mathbf{B}}}
\newcommand{\vC}{{\mathbf{C}}}
\newcommand{\vD}{{\mathbf{D}}}
\newcommand{\vE}{{\mathbf{E}}}
\newcommand{\vG}{{\mathbf{G}}}
\newcommand{\vL}{{\mathbf{L}}}
\newcommand{\vM}{{\mathbf{M}}}
\newcommand{\vX}{{\mathbf{X}}}
\newcommand{\vY}{{\mathbf{Y}}}
\newcommand{\cA}{{\mathcal{A}}}
\newcommand{\cH}{{\mathcal{H}}}
\newcommand{\cM}{{\mathcal{M}}}
\newcommand{\cO}{{\mathcal{O}}}
\newcommand{\cP}{{\mathcal{P}}}
\DeclareFontFamily{U}{ntxmia}{}
\DeclareFontShape{U}{ntxmia}{m}{it}{<-> ntxmia }{}
\DeclareFontShape{U}{ntxmia}{b}{it}{<-> ntxbmia }{}
\DeclareSymbolFont{lettersA}{U}{ntxmia}{m}{it}
\NewDocumentCommand{\varmathbb}{m}
 {
  \tl_map_inline:nn { #1 }
   {
    \use:c { varbb##1 }
   }
 }
\DeclareMathSymbol{varbb#1}{\mathord}{lettersA}{\int_eval:n { `#1+67 }}
\DeclareMathSymbol{varbbk}{\mathord}{lettersA}{169}
\newcommand{\Tr}{{\mathrm{Tr}}} % trace
\newcommand*{\fix}{\mathrm{Fix}\,}
\newcommand{\proj}{\Pi}
\newcommand{\reals}{\mathbb{R}}
\newcommand{\opA}{{\varmathbb{A}}}
\newcommand{\opI}{{\varmathbb{I}}}
\newcommand{\opJ}{{\varmathbb{J}}}
\newcommand{\opT}{{\varmathbb{T}}}
\newcommand{\inprod}[2]{\left\langle #1,#2 \right\rangle}
\newcommand{\sqnorm}[1]{\left\| #1 \right\|^2}
\newcommand{\norm}[1]{\left\|#1\right\|}
\newcommand{\at}{\textsf{A}}
\newcommand{\T}{\textsf{T}}
\newcommand{\adj}{\operatorname{adj}}
\newcommand{\fA}{\mathfrak{A}}
\newcommand{\fP}{\mathfrak{P}}
\begin{document}

\maketitle

\begin{abstract}
For nonexpansive fixed-point problems, Halpern's method with optimal parameters, its so-called H-dual algorithm, and in fact, an infinite family of algorithms containing them, all exhibit the exact minimax optimal convergence rate.
In this work, we provide a characterization of the complete, exhaustive family of distinct algorithms using predetermined step-sizes, represented as lower triangular H-matrices, which attain the same optimal convergence rate. 
The characterization is based on polynomials in the entries of the H-matrix that we call \textit{H-invariants}, whose values stay constant over all optimal H-matrices, together with \textit{H-certificates}, the nonnegativity of which precisely specifies the region of optimality within the common level set of H-invariants.
The H-invariance theory we present offers a novel view of optimal acceleration in first-order optimization as a mathematical study of carefully selected invariants, certificates, and structures induced by them.

\vspace{.1cm}
\noindent
\textbf{Keywords\phantom{.} } Fixed-point problems $\cdot$ Monotone operators $\cdot$ Acceleration $\cdot$ Rates of convergence $\cdot$ Minimax optimality

\vspace{.1cm}
\noindent
\textbf{Mathematics Subject Classification\phantom{.} } 47H05 $\cdot$ 47H09 $\cdot$ 47H10 $\cdot$ 68Q25 $\cdot$ 90C47 $\cdot$ 90C60
\end{abstract}

\section{Introduction}

In optimization, finding a minimax optimal, or worst-case optimal, algorithm for a given problem class is a standard goal of optimizers.
More formally, this goal can be stated as the meta-problem 
\begin{align}
\label{eqn:minimax-optimality}
    \underset{\cA \in \mathfrak{A}}{\text{minimize}} \,\, \underset{\cP \in \mathfrak{P}}{\text{maximize}} \,\, \cM \left( \cA, \cP ; N \right)
\end{align}
where $\mathfrak{A}$ denotes a class of algorithms (e.g.\ first, second, or zeroth-order deterministic algorithms, stochastic algorithms using mini-batching, randomized algorithms, etc.), $\mathfrak{P}$ denotes a class of problems (e.g.\ smooth (strongly) convex minimization, nonsmooth nonconvex minimization, smooth convex-concave minimax optimization, nonexpansive fixed-point problems, etc.), $N$ is the number of accesses to the computation oracle associated with the algorithm class, and $\cM$ is a measure of the performance---error, where smaller is better---of an algorithm $\cA$ for solving $\cP$ using $N$ oracle accesses.

\citet{NemirovskiYudin1983_problem} formalized and popularized the idea of studying asymptotically optimal algorithms in the context of minimax optimality\footnote{They originally formulated the problem as finding $\min \left\{ N \,\middle|\, \exists \cA \in \mathfrak{A} \text{ such that } \cM(\cA,\cP; N) \le \epsilon \right\}$, which is, in the end, equivalent to \eqref{eqn:minimax-optimality}.}. 
For example, for smooth convex minimization problems, the celebrated accelerated gradient method (AGM) of \citet{Nesterov1983_method} achieves $f(x_N) - \min_x f(x) = \cO\left(N^{-2}\right)$, which is optimal up to a constant for~\eqref{eqn:minimax-optimality} among all deterministic first-order algorithms using the gradient oracle $\nabla f(\cdot)$.
This type of result could be viewed as searching for an \textit{approximate} solution $\cA \in \mathfrak{A}$ which solves~\eqref{eqn:minimax-optimality}.
Over the decades since then, numerous results of the same spirit have appeared in the literature, identifying up-to-constant optimal approximate solutions to \eqref{eqn:minimax-optimality}.

A recently active movement in the field, initiated by \citet{DroriTeboulle2014_performance, KimFessler2016_optimized, TaylorHendrickxGlineur2017_smooth}, is to reformulate \eqref{eqn:minimax-optimality} into a finite-dimensional optimization problem, compute its numerical solution, and mathematically characterize an optimized solution $\cA_\star$ based on numerical insights.
Following this process, \citet{KimFessler2016_optimized} identified the optimized gradient method (OGM), which outperforms AGM by a factor $\approx 2$, and OGM was later proved to be an \textit{exact} optimal solution to \eqref{eqn:minimax-optimality} for smooth convex minimization \citep{Drori2017_exact}.
Taking inspiration from these results, subsequent works have discovered new exact optimal solutions to smooth strongly convex minimization \citep{TaylorDrori2023_optimal, DroriTaylor2022_oracle}, nonsmooth convex minimization \citep{DroriTaylor2020_efficient} and composite convex minimization \citep{JangGuptaRyu2025_computerassisted}.
For nonexpansive fixed-point problems (equivalently, proximal setting for monotone inclusion problems), \citet{SabachShtern2017_first} first achieved accelerated $\cO(N^{-2})$ convergence in squared fixed-point residual, which was later refined to the exact minimax optimal rate \citep{Lieder2021_convergence, Kim2021_accelerated, ParkRyu2022_exact}.

While these works represent a clear step toward resolving the meta-problem~\eqref{eqn:minimax-optimality}, they fall short of providing a mathematically \textit{complete} answer.
For nonsmooth convex minimization, there are distinct exact optimal $\cA$'s known \citep{Nesterov2004_introductory, DroriTaylor2020_efficient}, and for nonexpansive fixed-point problems, there are infinitely many exact optimal $\cA$ \citep{YoonKimSuhRyu2024_optimal}. 
The discussion of whether known optimal algorithms for \eqref{eqn:minimax-optimality} for distinct setups comprise a complete set of solutions is either missing or has been answered in the negative \citep{YoonKimSuhRyu2024_optimal}.

To summarize the chronological development of solving problem~\eqref{eqn:minimax-optimality} for the nonexpansive fixed-point setting, the literature has progressed through the following three stages:
 (i)~obtaining approximate (up-to-constant optimal) algorithms \citep{SabachShtern2017_first};
 (ii)~identifying \emph{an} exact optimal algorithm \citep{Lieder2021_convergence, Kim2021_accelerated, ParkRyu2022_exact};
 (iii)~recognizing that the exact optimal algorithm is not unique \citep{YoonKimSuhRyu2024_optimal}.
The natural next step in this mathematical progression is therefore (iv)~determining \emph{all} exact optimal solutions---which is what we accomplish in this work. We present the complete set of \emph{all} exact optimal algorithms for nonexpansive fixed-point problems using the notions of H-invariants and H-certificates, thereby demonstrating that a full resolution of \eqref{eqn:minimax-optimality} is indeed achievable.

\section{Key concepts and main results}

We consider the fixed-point problems
\begin{align}
\label{eqn:fixed-point}
\tag{FPP}
\begin{array}{cc}
    \underset{y \in \reals^d}{\textrm{find}} & y = \opT (y) 
\end{array}
\end{align}
where $\opT\colon \reals^d \to \reals^d$ is nonexpansive (1-Lipschitz):
\[
    \norm{\opT x - \opT y} \le \norm{x - y}, \quad \forall x,y \in \reals^d .
\]
We measure an algorithm's convergence via squared norm of the fixed-point residual, $\sqnorm{y - \opT y}$.

\paragraph{Equivalence to monotone inclusion.}
\eqref{eqn:fixed-point} with nonexpansive $\opT$ can be recast into a monotone inclusion problem: define $\opA = \left(\frac{\opI + \opT}{2}\right)^{-1} - \opI$ (equivalently, $\opT = 2\JA - \opI$ where $\JA = (\opI + \opA)^{-1}$ is the resolvent).
Here the inverse of $\frac{\opI + \opT}{2}$ denotes the set-valued operator defined by $y \in \left(\frac{\opI + \opT}{2}\right)^{-1}(x) \iff (\opI + \opT)(y) = y + \opT y = 2x$.
Then $\opA\colon \reals^d \rightrightarrows \reals^d$ is a maximally monotone operator, satisfying $\inprod{u-v}{x-y} \ge 0$ for any $u\in \opA x$, $v\in \opA y$ and has a maximal graph with this property \citep[Proposition~23.8]{bauschke2011convex}. 
With this correspondence, for $y \in \reals^d$, letting $x = \opJ_\opA y$ and $\Tilde{\opA}x := y-x \in \opA x$, we have $y - \opT y = 2\Tilde{\opA}x$ and in particular $y = \opT y \iff \Tilde{\opA} x = 0$.
Therefore, \eqref{eqn:fixed-point} is equivalent to the monotone inclusion problem
\begin{align*}
\begin{array}{cc}
    \underset{x \in \reals^d}{\textrm{find}} & 0 = \opA (x) 
\end{array}
\end{align*}
where the corresponding convergence measure is the squared operator norm $\sqnorm{\Tilde{\opA}x} = \frac{1}{4}\sqnorm{y - \opT y}$.

\paragraph{Complexity lower bound.}
There exists a lower bound, \cref{theorem:lower-bound}, for deterministic first-order algorithms for solving \eqref{eqn:fixed-point}.
This result first considers the class of algorithms $\fA_{\text{span}}$ satisfying the span condition
\begin{align*}
    y_{k+1} \in y_k + \mathrm{span}\left\{ y_0 - \opT y_0 , \dots, y_k - \opT y_k \right\} 
\end{align*}
and then extends it to the class $\fA_{\text{det}}$ of all deterministic first-order algorithms using the resisting oracle technique of \citet{NemirovskiYudin1983_problem}.

\begin{theorem}[\citep{ParkRyu2022_exact}]
\label{theorem:lower-bound}
Let $d\ge 2(N-1)$, $y_0 \in \reals^d$ and $R > 0$.
For any deterministic first-order algorithm using $N-1$ operator evaluations, there exists a nonexpansive operator $\opT\colon \reals^d \to \reals^d$ with a fixed point $y_\star$ such that $\|y_0 - y_\star\| = R$, and 
\begin{align*}
    \sqnorm{y_{N-1} - \opT y_{N-1}} \ge \frac{4R^2}{N^2} = \frac{4\sqnorm{y_0 - y_\star}}{N^2} .
\end{align*}
\end{theorem}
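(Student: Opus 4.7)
The plan is to proceed in two stages, mirroring the standard structure of minimax lower bounds in first-order optimization: first establish the bound for the span-restricted class $\fA_{\text{span}}$, then extend to all deterministic first-order algorithms $\fA_{\text{det}}$ via a resisting-oracle argument in the spirit of \citet{NemirovskiYudin1983_problem}. The dimension assumption $d \ge 2(N-1)$ will be used in the reduction to guarantee enough coordinate room for the oracle to respond adversarially at each step.

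For the span-restricted stage, I would construct an explicit worst-case nonexpansive operator as a Cayley transform $\opT = 2(\opI + \opA)^{-1} - \opI$ of a carefully chosen skew-adjoint linear map $\opA$ on $\reals^d$, which is automatically an isometry and hence nonexpansive. With $\opA$ taken as an appropriate block-diagonal sum of $2$D rotation-type blocks, starting from $y_0$ with $\|y_0 - y_\star\| = R$, the residual iterates $\{y_k - \opT y_k\}$ produced by any span-restricted algorithm lie in a Krylov subspace of $\opA$ applied to $y_0 - y_\star$. An induction then shows $y_{N-1} - \opT y_{N-1} = p(\opA)(y_0 - y_\star)$ for a polynomial $p$ of degree $\le N-1$ with a fixed value $p(0)$ dictated by the residual identity, and the target bound $\|y_{N-1} - \opT y_{N-1}\| \ge 2R/N$ should fall out of a Chebyshev-type extremal estimate for such polynomials on the spectrum of $\opA$.

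For the reduction $\fA_{\text{det}} \to \fA_{\text{span}}$, the plan is to build the operator $\opT$ adaptively, query by query: for any deterministic first-order algorithm $\cA$, maintain an orthonormal basis $\{e_1, \dots, e_k\}$ for the subspace revealed so far, and choose each oracle response $\opT y_k$ to lie in $\mathrm{span}\{e_1, \dots, e_{k+1}\}$ with $e_{k+1}$ orthogonal to everything seen before. By an isometric change of coordinates this is equivalent to the span-restricted iteration against the worst-case operator from the first stage, and the assumption $d \ge 2(N-1)$ provides the two-dimensional slack per step needed to extend the partial oracle to a globally nonexpansive $\opT$ on $\reals^d$.

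The hard part is the first stage, and specifically pinning down the exact constant $4$ rather than merely $\Theta(1)$: the lower-bound construction must match the extremal polynomial implicit in the known optimal upper bound (the Halpern iteration with its optimized step-sizes). I expect, therefore, that the spectrum of $\opA$ and the Krylov analysis will need to be reverse-engineered from the optimal Halpern parameters so that the Chebyshev extremal value comes out to exactly $2/N$. Verifying this equality, and checking that the resulting Cayley-transformed operator is genuinely a valid worst case for every span-restricted algorithm (not just the optimal one), is the technical crux.
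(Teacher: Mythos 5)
This theorem is a cited result from \citet{ParkRyu2022_exact}; the paper does not reprove it, but the core of the span-restricted lower bound is reproduced inside the proof of \cref{theorem:H-invariance-necessity}, so that is the natural benchmark. Your high-level plan is aligned with it: the argument is indeed two-stage (span-restricted class first, then a resisting-oracle reduction for $\fA_{\text{det}}$), and the worst-case operator really is an isometry of rotation type --- the paper's $\opT = \opI - 2G$ satisfies $G^\T G = \tfrac12(G+G^\T)$, hence $\opT^\T\opT = I$, which is exactly the Cayley-transform-of-a-skew-map family you propose.

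The genuine gap is that you never actually prove the bound: the entire quantitative content --- that the residual norm is at least $2R/N$ with the exact constant --- is deferred to a ``Chebyshev-type extremal estimate'' that you do not carry out, and you yourself flag reverse-engineering the spectrum so that ``the extremal value comes out to exactly $2/N$'' as the unresolved crux. As written, the proposal is a strategy, not a proof. Moreover, the Chebyshev detour is unnecessary: with the paper's $G$, the span condition forces $y_{N-1} \in y_0 + \spann\{e_1,\dots,e_{N-1}\}$, so the residual $2Gy_{N-1}$ lies in the affine subspace $2Gy_0 + V$ with $V = \spann\{Ge_1,\dots,Ge_{N-1}\}$ of dimension $N-1$ in $\reals^N$, whose orthogonal complement is the line $\spann\{e_1+\dots+e_N\}$. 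Since $Gy_0 = -\tfrac{R}{\sqrt N}e_1$, the projection of $2Gy_0$ onto that line has norm exactly $\tfrac{2R}{N}$, and this one-line orthogonality computation gives the constant $4/N^2$ with no extremal-polynomial machinery. If you want to complete your proposal, either carry out this projection argument or actually exhibit the extremal polynomial; likewise, the claim that $d \ge 2(N-1)$ supplies the needed slack in the resisting-oracle stage needs an explicit construction rather than an assertion.
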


Following the formulation and notation of~\eqref{eqn:minimax-optimality}, this states that when $\fP$ is the class of all nonexpansive fixed-point problems (represented by the tuple $(\opT, y_0)$), $\fA = \fA_\text{det}$ and $\cM(\cA,\opT,y_0; N) = \frac{\sqnorm{y_{N-1} - \opT y_{N-1}}}{\sqnorm{y_0 - y_\star}}$, then for any $\cA \in \fA$, we have $\max_{(\opT,y_0) \in \fP} \cM(\cA,\opT,y_0; N) \ge \frac{4}{N^2}$.
Here we use the convention of denoting the squared fixed-point residual norm for $y_{N-1}$ as $\cM(\cA,\opT,y_0; N)$ because computing the fixed-point residual itself requires an additional access to the operator evaluation oracle.

\paragraph{Optimal Halpern method exactly matches the lower bound.}
The lower bound of \cref{theorem:lower-bound} is matched by the classical Halpern iteration \citep{Halpern1967_fixed} with optimized interpolation parameters, which we call the \emph{Optimal Halpern Method (OHM)}: for $k=0,1,2,\dots$,
\begin{align}
\label{eqn:OHM}
    y_{k+1} = \frac{1}{k+2} y_0 + \frac{k+1}{k+2} \opT y_k .
    \tag{OHM}
\end{align}

\begin{theorem}[\citep{Kim2021_accelerated, Lieder2021_convergence}]
\label{theorem:OHM-rate}
Suppose $\opT\colon \reals^d \to \reals^d$ is nonexpansive and $y_\star \in \fix\opT$ (i.e., $y_\star = \opT y_\star$). Then for any $N=1,2,\dots$, \ref{eqn:OHM} exhibits the (exact optimal) rate
\begin{align}
\label{eqn:optimal-rate-N-iterate}
    \|y_{N-1} - \opT y_{N-1}\|^2 \le \frac{4\|y_0 - y_\star\|^2}{N^2} .
\end{align}
\end{theorem}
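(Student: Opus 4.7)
The plan is to establish the rate via a Lyapunov (potential function) argument. Write $u_k := y_k - \opT y_k$ for the fixed-point residual so that the target inequality reads $\sqnorm{u_{N-1}} \le 4\sqnorm{y_0 - y_\star}/N^2$, and rewrite the iteration as $y_{k+1} = \opT y_k + \tfrac{1}{k+2}(y_0 - \opT y_k)$, which makes explicit the ``anchored'' convex-combination structure that drives the acceleration.

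The central object would be a potential of the form
\begin{align*}
V_k \;=\; (k+1)^2 \sqnorm{u_k} + 2(k+1)\inner{u_k}{\opT y_k - y_\star} + \text{(lower-order correction terms)},
\end{align*}
designed to satisfy (a) $V_0 \le 4\sqnorm{y_0 - y_\star}$, (b) $V_{k+1} \le V_k$ for all $k$, and (c) $V_{N-1} \ge N^2 \sqnorm{u_{N-1}}$. Chaining these three yields $N^2\sqnorm{u_{N-1}} \le V_{N-1} \le \cdots \le V_0 \le 4\sqnorm{y_0-y_\star}$, which is exactly the claim. Property (a) is natural, since by nonexpansivity $\norm{u_0} \le \norm{y_0 - y_\star} + \norm{\opT y_\star - \opT y_0} \le 2\norm{y_0 - y_\star}$, accounting for the factor of $4$; property (c) is an algebraic identity that fixes the leading coefficient $(k+1)^2$.

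The heart of the argument is the monotonicity (b). After substituting the update into $V_{k+1}$ and expanding, I would attempt to write $V_k - V_{k+1}$ as a nonnegative combination of two specific nonexpansivity inequalities, namely
\begin{align*}
\sqnorm{y_k - y_\star} - \sqnorm{\opT y_k - y_\star} \;\ge\; 0, \qquad \sqnorm{y_k - y_{k-1}} - \sqnorm{\opT y_k - \opT y_{k-1}} \;\ge\; 0,
\end{align*}
plus a sum-of-squares remainder. The first couples the residual to the distance to $y_\star$; the second ties consecutive iterates together and is essential because the Halpern averaging mixes $y_0$ with $\opT y_k$. Requiring this polynomial identity to hold with $k$-dependent nonnegative multipliers simultaneously fixes both the correction terms in $V_k$ and those multipliers, and is the step where the specific stepsize $1/(k+2)$ enters in a binding way.

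The main obstacle is precisely this guessing step: one must identify the right Lyapunov correction terms and nonnegative weights so that $V_k - V_{k+1}$ is manifestly a nonnegative quadratic form in the vectors $\{u_k, u_{k-1}, y_k - y_\star, y_0 - y_\star\}$. In practice, one either formulates the one-step worst-case problem as a semidefinite program in the style of \cite{DroriTeboulle2014_performance, TaylorHendrickxGlineur2017_smooth} and reads the potential off the optimal dual certificate, or borrows the explicit Lyapunov function already constructed in \cite{Lieder2021_convergence, Kim2021_accelerated, ParkRyu2022_exact}. Once the potential is in hand, verifying (a)--(c) reduces to routine (if tedious) algebra and the rate follows immediately.
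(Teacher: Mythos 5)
Your overall strategy---a telescoping potential whose one-step decrease is certified by a nonnegative combination of nonexpansiveness inequalities---is indeed how this rate is proved in the cited works and how the present paper frames it (via the identity \eqref{eqn:H-symmetry-sufficiency-core-identity} specialized to OHM, and the Lyapunov function in \eqref{eqn:OHM-Lyapunov}). But as written there is a genuine gap: the entire mathematical content of the argument is the explicit potential, its correction terms, and the verification of (a)--(c), and you explicitly defer all of this to an SDP solve or to the papers being cited. Nothing is actually proved. Moreover, the ansatz you do write down is not yet consistent: completing the square gives $\sqnorm{u_0} + 2\inprod{u_0}{\opT y_0 - y_\star} = \sqnorm{y_0 - y_\star} - \sqnorm{\opT y_0 - y_\star} \le \sqnorm{y_0 - y_\star}$, so (a) holds without the factor $4$ you invoke; on the other hand, (c) is \emph{not} an algebraic identity, since $2N\inprod{u_{N-1}}{\opT y_{N-1} - y_\star} \ge -N\sqnorm{u_{N-1}}$ is the best one gets from nonexpansiveness at $y_\star$, which leaves $V_{N-1} \ge N^2\sqnorm{u_{N-1}} - N\sqnorm{u_{N-1}}$ and falls short of the claimed lower bound unless unspecified correction terms close the deficit. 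Anchoring the cross term at $y_\star$ also forces you to spend the inequality $\sqnorm{y_k - y_\star} - \sqnorm{\opT y_k - y_\star} \ge 0$ at every step, which is why the corrections become hard to guess.

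The clean way to finish---and the one this paper's machinery supplies---is to pass to the monotone-operator picture: set $g_{k+1} = \tfrac12 (y_k - \opT y_k)$, $x_{k+1} = \tfrac12 (y_k + \opT y_k)$, and take $V_k = k^2 \sqnorm{g_k} + k \inprod{g_k}{x_k - y_0}$, anchored at $y_0$ rather than $y_\star$. Then $V_1 = 0$ since $x_1 - y_0 = -g_1$, the decrease $V_k - V_{k+1} = k(k+1)\inprod{x_{k+1} - x_k}{g_{k+1} - g_k} \ge 0$ uses exactly one consecutive-iterate monotonicity inequality per step with no correction terms (this is \eqref{eqn:OHM-Lyapunov}, i.e., the sparsity pattern $\lambda^\star_{j+1,j} > 0$ of Section~\ref{subsubsection:re-deriving-OHM}), and the fixed point enters only once at the end: $0 \ge V_N \ge N^2\sqnorm{g_N} + N\inprod{g_N}{y_\star - y_0} \ge \tfrac{N^2}{2}\sqnorm{g_N} - \tfrac12 \sqnorm{y_0 - y_\star}$ by monotonicity at $y_\star$ and Young's inequality, whence $\sqnorm{y_{N-1} - \opT y_{N-1}} = 4\sqnorm{g_N} \le 4\sqnorm{y_0 - y_\star}/N^2$. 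Until you exhibit and verify a potential at this level of explicitness, the proposal remains an outline rather than a proof.
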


Again using the notation of~\eqref{eqn:minimax-optimality}, \cref{theorem:OHM-rate} states that $\max_{(\opT,y_0) \in \fP} \cM(\cA,\opT,y_0; N) \le \frac{4}{N^2}$.
Combining this with \cref{theorem:lower-bound}, we establish the equality, representing minimax optimality:
\begin{align*}
    \min_{\cA \in \fA_{\text{det}}} \, \max_{(\opT,y_0) \in \fP} \, \cM(\cA,\opT,y_0; N) = \frac{4}{N^2} .
\end{align*}

\paragraph{Class of H-matrix representable algorithms.}
\ref{eqn:OHM} is a fixed step-size algorithm that admits a representation of the form
\begin{align}
\label{eqn:H-matrix-representation}
    y_{k+1} = y_k - \sum_{j=0}^k h_{k+1,j+1} \left( y_j - \opT y_j \right)
\end{align}
with some predetermined $h_{k+1,j+1} \in \reals$. 
When an algorithm with $N-1$ total iterations (operator evaluations of $\opT$) is expressible in the format~\eqref{eqn:H-matrix-representation}, we say it is \emph{H-matrix representable}, and define its \emph{H-matrix} as the lower triangular matrix $H \in \reals^{(N-1) \times (N-1)}$ with
\begin{align*}
    (H)_{k,j} = \begin{cases}
        h_{k,j} & \text{if $j\le k$} \\
        0 & \text{otherwise.} 
    \end{cases}
\end{align*}
For \ref{eqn:OHM}, in particular, we have 
\begin{align*}
\begin{split}
    & \left(H_{\text{OHM}}\right)_{k,j} =
    \begin{cases}
        -\frac{j}{k(k+1)} & \text{if } j < k \\
        \frac{k}{k+1}     & \text{if } j = k .
    \end{cases}
\end{split} 
\end{align*}
We denote the class of H-matrix representable algorithms by $\fA_\text{H}$, so that we have $\fA_\text{H} \subset \fA_\text{span} \subset \fA_\text{det}$.

\paragraph{Non-uniqueness of minimax optimal algorithms via H-dual.}
Interestingly, \citet{YoonKimSuhRyu2024_optimal} showed that given a fixed $N$, a minimax optimal H-matrix representable algorithm satisfying \eqref{eqn:optimal-rate-N-iterate} is not unique.
In particular, they introduced the \emph{Dual-OHM} algorithm which exhibits the same rate~\eqref{eqn:optimal-rate-N-iterate} as OHM, defined as
\begin{align} \label{eqn:Dual-OHM} \tag{Dual-OHM}
    y_{k+1} = y_k + \frac{N-k-1}{N-k} \left( \opT y_k - \opT y_{k-1} \right), \quad k=0,\dots,N-2
\end{align}
where $\opT y_{-1} = y_0$.
\ref{eqn:Dual-OHM} has the H-matrix
\begin{align*}
\begin{split}
     \left(H_{\text{Dual-OHM}}\right)_{k,j} =
    \begin{cases}
        -\frac{N-k}{(N-j)(N-j+1)} & \text{if } j < k \\
        \frac{N-k}{N-k+1}         & \text{if } j = k
    \end{cases}
\end{split} 
\end{align*}
which is the anti-diagonal transpose of $H_{\text{OHM}}$, i.e., they are the reflections of each other along the anti-diagonal: 
\begin{align*}
    \left(H_{\text{Dual-OHM}}\right)_{k,j} = \left(H_{\text{OHM}}\right)_{N-j,N-k} .    
\end{align*}
In general, the \textit{H-dual} of an algorithm in $\mathfrak{A}_\mathrm{H}$ is an algorithm represented by the anti-diagonal transpose of its H-matrix---the terminology originated from \cite{KimOzdaglarParkRyu2023_timereversed}.

\citet{YoonKimSuhRyu2024_optimal} in fact proved the existence of a family of H-matrices, all satisfying~\eqref{eqn:optimal-rate-N-iterate}, admitting an $(N-2)$-dimensional open and convex parametrization.
At the same time, however, they showed that their family is incomplete, i.e., fails to capture all H-matrices with the rate~\eqref{eqn:optimal-rate-N-iterate}.
A complete characterization of minimax optimal H-matrices, i.e., finding all solutions to~\eqref{eqn:minimax-optimality} within $\fA_\text{H}$, thus has been left as an open problem. 

\paragraph{Main contributions.}
In this paper, we close the problem of identifying the set of all exact minimax optimal H-matrices for nonexpansive fixed-point problems.
The set is characterized as (a subset of) the intersection of level sets of $N-1$ homogeneous polynomials $P(N-1,m; H)$ of degree $m$ (where $m$ runs over $1,\dots,N-1$) in the entries of H-matrices.
We define them in \cref{section:H-invariants-certificates}, and name them \emph{H-invariants}, as their values stay invariant over the set of optimal H-matrices.
We additionally define what we call \emph{H-certificates} $\lambda_{k,j}^\star(H)$ (where $k=2,\dots,N$ and $j=1,\dots,k-1$), which are multipliers coupled with inequalities used in its convergence proof, which should be nonnegative. 
They are also polynomials in the entries of $H$, whose precise formulas are explicitly provided in \cref{theorem:lambda-characterization}, and the inequality constraints $\lambda_{k,j}^\star(H) \ge 0$ characterize a region of optimality within the common level set of H-invariants.
Using these algebraic quantities, we state our main result as follows.

\begin{theorem}
\label{theorem:main-result}
A fixed-point algorithm \eqref{eqn:H-matrix-representation} achieves the exact optimal worst-case rate of $\sqnorm{y_{N-1} - \opT y_{N-1}} \le \frac{4\sqnorm{y_0 - y_\star}}{N^2}$ over the class of nonexpansive operators $\opT\colon \reals^d \to \reals^d$ if and only if
\begin{align}
\label{eqn:optimal-H-conditions}
\tag{$\ast$}
    \begin{cases}
    P(N-1,m; H) = \frac{1}{N} \binom{N}{m+1}, & m=1,\dots,N-1 \\
    \lambda_{k,j}^\star(H) \geq 0, & k=2,\dots,N, \,\, j=1,\dots,k-1 .
    \end{cases}
\end{align}
\end{theorem}

As byproducts of this main characterization result, we obtain some interesting additional results.
\begin{itemize}
    \item We show that \ref{eqn:OHM} is the only \textit{anytime algorithm}, converging at the optimal rate for all iterations $k$ (\cref{corollary:anytime-optimal-algorithm-is-unique}).

    \item We provide a procedure for identifying new optimal algorithms, with explicit H-matrices, by requiring a majority of H-certificates to be $0$ (with only one nonzero $\lambda_{k,j}^\star (H)$ for each $j=1,\dots,N-1$). These algorithms are equipped with simple convergence proofs involving only the corresponding sparse set of inequalities.
    From this, we discover the first optimal algorithms that are self-H-dual or lack an optimal H-dual.
\end{itemize}

\paragraph{Equivalent result for monotone inclusion.}
Given the equivalent maximally monotone inclusion formulation with $\opA = \left(\frac{\opI + \opT}{2}\right)^{-1} - \opI$, for each $y_k$ ($k=0,\dots,N-1$) generated by an H-matrix representable algorithm \eqref{eqn:H-matrix-representation}, define $x_{k+1} = \JA y_k$ and $g_{k+1} = y_k - x_{k+1} \in \opA x_{k+1}$.
Then $\opT y_k = x_{k+1} - g_{k+1} = y_k - 2g_{k+1}$, so we can write
\begin{align}
\label{eqn:H-matrix-monotone-version}
    y_{k+1} = y_k - \sum_{j=0}^k 2h_{k+1,j+1} g_{j+1} .
\end{align}
Then \cref{theorem:main-result} can be translated into the following statement, and vice versa.

\begin{corollary}
A proximal-point algorithm~\eqref{eqn:H-matrix-monotone-version} achieves the exact optimal worst-case rate of $\sqnorm{g_N} \le \frac{\sqnorm{y_0 - y_\star}}{N^2}$, where $g_N = y_{N-1} - x_N \in \opA x_N$, over the class of maximally monotone operators $\opA\colon \reals^d \rightrightarrows \reals^d$ if and only if \eqref{eqn:optimal-H-conditions} holds.
\end{corollary}

\paragraph{Organization.}
The remainder of the paper is organized as follows. 
\Cref{section:H-invariants-certificates} defines H-invariants and H-certificates, and discusses their basic properties.
Sections~\ref{section:H-invariance-necessity} and \ref{section:completing-chracterization} together present the proof of \cref{theorem:main-result}.
\Cref{section:H-invariance-necessity} shows the necessity of H-invariance conditions $P(N-1,m; H) = \frac{1}{N} \binom{N}{m+1}$ for optimality and provides \cref{corollary:anytime-optimal-algorithm-is-unique} as its direct implication.
\Cref{subsection:optimality-and-lambda} shows that H-invariance, together with $\lambda_{k,j}^\star(H) \ge 0$, implies optimality.
\Cref{subsection:necessity-of-nonnegative-H-certificates} completes the characterization by conversely showing that the inequalities $\lambda_{k,j}^\star(H)\geq 0$ are also necessary for optimality.
Finally, \cref{section:new-boundary-algorithms} illustrates the characterization of novel optimal algorithms as an application of \cref{theorem:main-result}.

\section{H-invariants and H-certificates: Motivation and definitions}
\label{section:H-invariants-certificates}

\subsection{H-invariants}
\label{subsection:H-invariants}
H-invariants first appeared in \cite{YoonKimSuhRyu2024_optimal} (without the name) as a tool to show that, when the operator is linear, an algorithm in $\mathfrak{A}_\mathrm{H}$ and its H-dual produce the same terminal iterates from the same initial point (while the intermediate trajectories may significantly differ).
We start with this discussion, and then naturally define H-invariants.

Suppose that $\opT$ is linear, so that there exists a matrix $G \in \reals^{d\times d}$ satisfying $\opT y = (I - 2G)y = y - 2Gy$ for all $y \in \reals^d$, where $I$ is the identity matrix. 
Then we can write
\begin{align*}
    y_{k+1} = y_k - \sum_{j=0}^k 2h_{k+1,j+1} Gy_j .
\end{align*}
Using induction, we can write for $k=0,\dots,N-1$,
\begin{align}
\label{eqn:H-matrix-linear-case-polynomial-expression}
    y_k = \sum_{m=0}^{k} (-1)^m P(k,m; 2H) G^m y_0 
\end{align}
where $P(k,0; H) = 1$ and for $m=1,\dots,k$,
\begin{align*}
    P(k,m; H) = \sum_{\substack{1 \le j(1) \le i(1) < j(2) \le i(2) < \cdots \le i(m-1) < j(m) \le i(m) \le k}} \prod_{r=1}^m h_{i(r),j(r)} .
\end{align*}
The first few concrete expressions are as follows:
\begin{align*}
    P(1,1; H) & = h_{1,1} \\
    P(2,1; H) & = h_{1,1} + h_{2,1} +  h_{2,2} \\
    P(2,2; H) & = h_{1,1} h_{2,2} \\
    P(3,1; H) & = h_{1,1} + h_{2,1} + h_{2,2} + h_{3,1} + h_{3,2} + h_{3,3} \\
    P(3,2; H) & = h_{1,1}h_{2,2} + h_{1,1}h_{3,2}  + h_{1,1}h_{3,3} + h_{2,1}h_{3,3} + h_{2,2}h_{3,3} \\
    P(3,3; H) & = h_{1,1}h_{2,2}h_{3,3} .
\end{align*}
By definition, algorithms with the same values of $P(N-1,m; H)$ for $m=1,\dots,N-1$ produce the same terminal iterate $y_{N-1}$ if $\opT$ is linear.
This is the case for algorithms in the H-dual relationship \citep[Proposition~I.1]{YoonKimSuhRyu2024_optimal}. 
Note that each $P(k,m; H)$ is a homogeneous polynomial of degree $m$ in entries of $H$, so the effect of scaling $H$ is $P(k,m; cH) = c^m P(k,m; H)$ for any $c\in \reals$ and $m=1,\dots,k$.

\begin{definition}
\label{def:H-invariants}
For an algorithm represented by a lower triangular $H \in \reals^{(N-1)\times(N-1)}$, its \emph{H-invariants} are the quantities $P(N-1,m; H)$ for $m=1,\dots,N-1$.
\end{definition}

\subsection{H-certificates}
\label{subsection:H-certificates}

Consider the equivalent formulation~\eqref{eqn:H-matrix-monotone-version} of H-matrix representable algorithms based on the maximally monotone operator $\opA = \left(\frac{\opI+\opT}{2}\right)^{-1}$. With this notation, nonexpansivity inequalities for $\opT$ can be recast into monotonicity inequalities for $\opA$ as
\begin{align*}
    & \sqnorm{y_{k-1} - y_{j-1}} - \sqnorm{\opT y_{k-1} - \opT y_{j-1}} \ge 0 \\
    & \iff \sqnorm{x_k + g_k - x_j - g_j} - \sqnorm{x_k - g_k - x_j + g_j} \ge 0 \\
    & \iff \inprod{x_k - x_j}{g_k - g_j} \ge 0 
\end{align*}
and similarly,
\begin{align*}
    \sqnorm{y_{k-1} - y_\star} - \sqnorm{\opT y_{k-1} - y_\star} \ge 0 \iff \inprod{g_k}{x_k - y_\star} \ge 0
\end{align*}
for $k,j=1,\dots,N$ and $y_\star \in \fix\opT$.
In \cref{theorem:lambda-characterization}, we prove that if $H$ satisfies the \textit{H-invariance conditions} $P(N-1,m; H) = \frac{1}{N} \binom{N}{m+1}$ for $m=1,\dots,N-1$, then there exist unique $\lambda_{k,j}$, given as explicit polynomials in the entries of $H$ (whose precise forms are also given in \cref{theorem:lambda-characterization}) for $1 \le j < k \le N$, satisfying
\begin{align}
\label{eqn:H-symmetry-sufficiency-core-identity}
    0 = N \sqnorm{g_N} + \inprod{g_N}{x_N - y_0} + \sum_{k=1}^N \sum_{j=1}^{k-1} \lambda_{k,j} \inprod{x_k - x_j}{g_k - g_j} .
\end{align}

\begin{definition}
For an H-matrix representable algorithm satisfying $P(N-1,m;H) = \frac{1}{N}\binom{N}{m+1}$ for $m=1,\dots,N-1$, its \emph{H-certificates} are the unique values $\lambda_{k,j}^\star(H)$ of $\lambda_{k,j}$ for each pair $(k,j)$ of indices $1 \le j < k \le N$ satisfying \eqref{eqn:H-symmetry-sufficiency-core-identity}.
\end{definition}

If $\lambda_{k,j}^\star (H) \ge 0$ for all $1 \le j < k \le N$, then from \eqref{eqn:H-symmetry-sufficiency-core-identity} we have
\begin{align*}
    0 & \ge N \sqnorm{g_N} + \inprod{g_N}{x_N - y_0} \ge N \sqnorm{g_N} + \inprod{g_N}{y_\star - y_0} \ge \frac{N}{2} \sqnorm{g_N} - \frac{1}{2N} \sqnorm{y_\star - y_0}
\end{align*}
where the second inequality uses monotonicity of $\opA$ and the last inequality is Young's inequality.
This, in turn, implies
\begin{align*}
    \sqnorm{y_{N-1} - \opT y_{N-1}} = 4\sqnorm{g_N} \le \frac{4\sqnorm{y_0 - y_\star}}{N^2} .
\end{align*}
The above discussion illustrates why H-invariance and nonnegative H-certificates together imply exact optimality of the algorithm defined by $H$, and indicates that the proof of its optimality is given by the form~\eqref{eqn:H-symmetry-sufficiency-core-identity}.

\section{Necessity of H-invariance for optimality}
\label{section:H-invariance-necessity}

This section presents the following \cref{theorem:H-invariance-necessity}, showing that H-invariance conditions are necessary for optimality.
Furthermore, as a direct implication of this result, we prove that OHM is the unique anytime optimal algorithm (\cref{corollary:anytime-optimal-algorithm-is-unique}).

\begin{theorem}
\label{theorem:H-invariance-necessity}
A fixed-point algorithm \eqref{eqn:H-matrix-representation} achieves the exact optimal worst-case rate of $\sqnorm{y_{N-1} - \opT y_{N-1}} \le \frac{4\sqnorm{y_0 - y_\star}}{N^2}$ over the class of nonexpansive operators only if $P(N-1,m; H) = \frac{1}{N} \binom{N}{m+1}$ for $m=1,\dots,N-1$.
\end{theorem}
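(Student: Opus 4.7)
The plan is to apply the hypothesized rate to nonexpansive linear operators on $\reals^2 \cong \CC$ given by complex scalar multiplication by $1 - 2\alpha$, for $\alpha$ ranging over the closed complex disk $D := \{z \in \CC : |1 - 2z| \le 1\}$. Each such $\opT_\alpha\colon y \mapsto (1-2\alpha) y$ is nonexpansive with unique fixed point $y_\star = 0$. Specializing the linear-operator formula \eqref{eqn:H-matrix-linear-case-polynomial-expression} to the scalar operator $G = \alpha I$ gives $y_{N-1} = p(\alpha)\, y_0$ with $p(\alpha) := \sum_{m=0}^{N-1}(-1)^m 2^m P(N-1,m;H)\,\alpha^m$, and hence
\[
y_{N-1} - \opT_\alpha y_{N-1} \;=\; 2\alpha\, p(\alpha)\, y_0 \;=\; 2\, q(\alpha)\, y_0,
\qquad
q(\alpha) \;:=\; \sum_{m=0}^{N-1}(-1)^m 2^m P(N-1,m;H)\,\alpha^{m+1}.
\]
The polynomial $q$ has degree $\le N$, with $q(0) = 0$ and $q'(0) = 1$ (the latter from $P(N-1,0;H) = 1$). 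Applying the assumed rate to each such $\opT_\alpha$ yields the pointwise constraint $|q(\alpha)| \le 1/N$ for every $\alpha \in D$.

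The heart of the proof is then an extremal polynomial uniqueness claim: the only polynomial of degree $\le N$ with $q(0) = 0$, $q'(0) = 1$, and $|q(\alpha)| \le 1/N$ on $D$ is
\[
q^\star(\alpha) \;:=\; \frac{1 - (1-2\alpha)^N}{2N}.
\]
Given this uniqueness, the binomial expansion $1 - (1-2\alpha)^N = -\sum_{m=0}^{N-1}\binom{N}{m+1}(-2\alpha)^{m+1}$ rewrites $q^\star$ as $\sum_{m=0}^{N-1}(-1)^m 2^m \tfrac{1}{N}\binom{N}{m+1}\,\alpha^{m+1}$, so matching coefficients against the expression for $q$ extracts the H-invariance identities $P(N-1,m;H) = \tfrac{1}{N}\binom{N}{m+1}$ for $m = 1, \ldots, N-1$.

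The main obstacle is therefore this extremal uniqueness, which I would attack via the conformal change of variable $w = 1 - 2\alpha$, which maps $D$ bijectively onto the closed unit disk $\overline{\mathbb{D}}$ and sends $\alpha = 0$ to the boundary point $w = 1$. The problem becomes: show that the unique polynomial $\tilde q$ of degree $\le N$ with $\tilde q(1) = 0$, $\tilde q'(1) = -1/2$, and $|\tilde q| \le 1/N$ on $\overline{\mathbb{D}}$ is $\tilde q^\star(w) := (1 - w^N)/(2N)$. A natural route uses the Fej\'er--Riesz factorization to write $1/N^2 - |\tilde q(w)|^2 = |R(w)|^2$ on $\partial\mathbb{D}$ for some polynomial $R$ of degree $\le N$, converting feasibility into the polynomial identity $N^2\bigl(\tilde q(w)\tilde q^\sharp(w) + R(w) R^\sharp(w)\bigr) = w^N$ in $\CC[w]$, where $p^\sharp(w) := w^N\,\overline{p(1/w)}$ denotes the reverse-conjugate polynomial. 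Evaluating the identity and its derivative at $w = 1$ forces $|R(1)| = 1/N$ and constrains $R'(1)$ to be a real multiple of $R(1)$; a complex Chebyshev-style alternation argument, exploiting that $|\tilde q^\star|$ attains its maximum $1/N$ precisely at the $N$ distinct points $w_k = e^{i(2k+1)\pi/N}$ on $\partial\mathbb{D}$, then classifies all factorizations of $w^N$ compatible with the boundary data and ultimately forces $(\tilde q, R) = (\tilde q^\star, R^\star)$ up to an overall phase on $R$ that is immaterial for $\tilde q$.
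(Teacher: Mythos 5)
Your reduction to a complex extremal polynomial problem is a genuinely different route from the paper's: the paper uses a \emph{single} $N$-dimensional linear worst-case operator $\opT = \opI - 2G$, shows by a span/induction argument that the terminal residual decomposes orthogonally with $\sqnorm{\proj_{V^\perp}(2Gy_{N-1})} = 4R^2/N^2$ already saturating the bound, and then extracts the identities $P(N-1,m;H)=\frac{1}{N}\binom{N}{m+1}$ from a triangular linear system via Chu--Vandermonde. Your setup, the formula $y_{N-1}-\opT_\alpha y_{N-1}=2q(\alpha)y_0$, the constraint $|q(\alpha)|\le 1/N$ on $D$, and the final coefficient extraction from $q^\star(\alpha)=\frac{1-(1-2\alpha)^N}{2N}$ are all correct, and if completed your argument would even show that two-dimensional instances suffice for necessity.

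However, the entire burden rests on the extremal uniqueness claim, and that is where there is a genuine gap. First, a concrete flaw in the sketched Fej\'er--Riesz route: in the identity $N^2(\tilde q\tilde q^\sharp + RR^\sharp) = w^N$, differentiating once and evaluating at $w=1$ gives the term $\tilde q'(1)\tilde q^\sharp(1)$, which vanishes because $\tilde q^\sharp(1)=\overline{\tilde q(1)}=0$; so the first-derivative evaluation constrains only $R'(1)\overline{R(1)}$ (forcing it to be real) and carries \emph{no} information about the crucial normalization $\tilde q'(1)=-1/2$. Second, ``classifying all factorizations of $w^N$'' is nontrivial and not carried out: on $\partial\mathbb{D}$ the identity just says $|\tilde q|^2+|R|^2\equiv 1/N^2$, and a constant admits a large family of such decompositions with $\deg\le N$ (e.g.\ any $\tilde q = c_0 + c_N w^N$, $R=d_0+d_N w^N$ with $\overline{c_0}c_N+\overline{d_0}d_N=0$), so the side conditions at $w=1$ must do all the work and you have not shown how. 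The claim itself does appear to be true, and a cleaner path is available: the feasible set is convex, $\tilde q^\star$ attains $|\tilde q^\star|=1/N$ exactly at the $N$ roots $w_k$ of $w^N=-1$ with $\tilde q^\star(w_k)=1/N$, and one can exhibit strictly positive multipliers $\mu_k\propto \sin^{-2}\!\big(\tfrac{(2k+1)\pi}{2N}\big)$ for which $\sum_k\mu_k\,\mathrm{Re}\,u(w_k)=0$ whenever $u(1)=u'(1)=0$ and $\deg u\le N$; then any feasible direction $u$ must satisfy $\mathrm{Re}(\overline{\tilde q^\star(w_k)}u(w_k))\le 0$ for all $k$, hence $u(w_k)=0$ for all $k$ by complementarity and the quadratic term, and together with $u(1)=0$ this gives $N+1$ roots of a degree-$\le N$ polynomial, so $u\equiv 0$. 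Without proving this lemma (by this or any other route), the proposal does not yet establish the theorem.
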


\begin{proof}
Consider the worst-case nonexpansive operator from \citep{ParkRyu2022_exact}, $\opT = \opI - 2G \colon \reals^{N} \to \reals^{N}$ where $G$ is the linear operator represented by the matrix
\begin{align*}
    G = \frac{1}{2} \begin{bmatrix} 
    1 & 0 & \cdots & 0 & 1 \\ 
    -1 & 1 & \cdots & 0 & 0 \\
    \vdots & \vdots & \ddots & \vdots & \vdots \\
    0 & 0 & \cdots & 1 & 0 \\
    0 & 0 & \cdots & -1 & 1
    \end{bmatrix} \in \reals^{N\times N} .
\end{align*}
The matrix $2G$ has determinant 2, showing that $G$ has a unique zero, i.e., $\opT$ has a unique fixed point $y_\star = 0$.

Let us denote the elementary basis vectors of $\reals^N$ by $e_1,\dots,e_N$.
Observe that $G(e_1+\dots+e_N) = e_1$, so
\begin{align*}
    G^{-1} e_1 = e_1 + \dots + e_N = \begin{bmatrix} 1 \\ 1 \\ \vdots \\ 1 \end{bmatrix} \in \reals^N .
\end{align*}
Fix $R>0$, and suppose that we run the algorithm \eqref{eqn:H-matrix-representation} with $y_0 = -\frac{R}{\sqrt{N}} G^{-1}e_1$.
(Note that with this choice, we have $\norm{y_0 - y_\star} = R$.)
By the definition of our update rules, we have
\begin{align}
\label{eqn:necessary-span-definition}
    y_{k+1} \in y_k + \spann \{Gy_0, Gy_1,\dots, Gy_k\}
\end{align}
for $k=0,\dots,N-2$.
Using this and the induction arguments, we can show that
\begin{align}
\label{eqn:necessary-span-elementary-basis}
    y_j \in y_0 + \spann \{e_1,\dots,e_j\} 
\end{align}
for $j=1,\dots,N-1$.
The case $j=1$ is immediate from \eqref{eqn:necessary-span-definition} with $k=0$ and $Gy_0 \in \spann\{e_1\}$.
Next, suppose that \eqref{eqn:necessary-span-elementary-basis} holds for $j=1,\dots,\ell$, where $\ell \le N-2$.
Then by \eqref{eqn:necessary-span-definition}, we have
\begin{align*}
    y_{\ell+1} & \in y_\ell + \spann \{Gy_0, Gy_1,\dots, Gy_\ell\} \\
    & \in y_0 + \spann \{e_1,\dots,e_\ell\} + \spann \{Gy_0, Gy_1,\dots, Gy_\ell\} .
\end{align*}
By induction hypothesis, we have $Gy_j \in Gy_0 + \spann\{Ge_1,\dots,Ge_j\}$ for $j=1,\dots,\ell$, and because $Ge_j = \frac{1}{2} e_j - \frac{1}{2} e_{j+1}$,
\begin{align*}
    \spann \{Gy_0, Gy_1,\dots, Gy_\ell\} \subseteq \spann\{ e_1, \dots, e_{\ell+1} \} .
\end{align*}
Therefore $y_{\ell+1} \in y_0 + \spann \{e_1,\dots,e_{\ell+1}\}$, which completes the induction and thus proves \eqref{eqn:necessary-span-elementary-basis}.

Now by definition of $\opT$ and \eqref{eqn:necessary-span-elementary-basis}, we have
\begin{align*}
    y_{N-1} - \opT y_{N-1} = 2Gy_{N-1} & \in 2Gy_0 + \spann \{Ge_1, \dots, Ge_{N-1}\} \\
    & = -\frac{2R}{\sqrt{N}} e_1 + \spann \{Ge_1, \dots, Ge_{N-1}\} .
\end{align*}
Observe that $V = \spann \{Ge_1, \dots, Ge_{N-1}\}$ is an $(N-1)$-dimensional subspace of $\reals^N$, and its orthogonal complement is $V^\perp = \spann \{ e_1 + \dots + e_N \}$.
Therefore,
\begin{align*}
    \sqnorm{y_{N-1} - \opT y_{N-1}} & = \sqnorm{2Gy_{N-1}} \\
    & = \sqnorm{\proj_{V} \left( 2Gy_{N-1} \right)} + \sqnorm{\proj_{V^\perp} \left( 2Gy_{N-1} \right)} \\
    & \ge \sqnorm{\proj_{V^\perp} \left( 2Gy_{N-1} \right)}  \\
    & = \sqnorm{\proj_{V^\perp} \left( -\frac{2R}{\sqrt{N}} e_1 \right)} \\
    & = \sqnorm{-\frac{2R}{N\sqrt{N}} (e_1+\dots+e_N)} \\
    & = \frac{4R^2}{N^2} 
\end{align*}
where $\proj_V$ and $\proj_{V^\perp}$ denote the projection maps onto the subspaces $V$ and $V^\perp$, respectively.
If $y_{N-1}$ is generated by an algorithm achieving the exact optimal worst case, then it must necessarily satisfy $\sqnorm{y_{N-1} - \opT y_{N-1}} \le \frac{4\sqnorm{y_0 - y_\star}}{N^2} = \frac{4R^2}{N^2}$ so the equality must hold.
This holds if and only if $\proj_V (2Gy_{N-1}) = 0$, i.e., if and only if
\begin{align}
\label{eqn:necessary-condition-projection-identity}
    Gy_{N-1} = \proj_{V^\perp} (Gy_{N-1}) = -\frac{R}{N\sqrt{N}} (e_1+\dots+e_N) .
\end{align}
Now from \eqref{eqn:H-matrix-linear-case-polynomial-expression} we have
\begin{align}
\label{eqn:necessary-condition-GyNm1-expression}
    Gy_{N-1} = \sum_{m=0}^{N-1} (-1)^m P(N-1,m; 2H) G^{m+1} y_0 .
\end{align}
Next, we show that
\begin{align}
\label{eqn:necessary-condition-Gmp1y0-expression}
    G^{m+1} y_0 = -\frac{R}{\sqrt{N}} \frac{1}{2^m} \sum_{j=1}^{m+1} (-1)^{j-1} \binom{m}{j-1} e_j
\end{align}
for $m=0,1,\dots,N-1$.
The case $m=0$ is immediate because $Gy_0 = -\frac{R}{\sqrt{N}} e_1 = -\frac{R}{\sqrt{N}} \frac{1}{2^0} \binom{0}{0} e_1$.
Now, to use induction, suppose that \eqref{eqn:necessary-condition-Gmp1y0-expression} holds for some $0 \le m \le N-2$.
Then
\begin{align*}
    G^{m+2} y_0 & = G \left( G^{m+1} y_0 \right) \\
    & = G \left( -\frac{R}{\sqrt{N}} \frac{1}{2^m} \sum_{j=1}^{m+1} (-1)^{j-1} \binom{m}{j-1} e_j \right) \\
    & = -\frac{R}{\sqrt{N}} \frac{1}{2^m} \sum_{j=1}^{m+1} (-1)^{j-1} \binom{m}{j-1} Ge_j \\
    & = -\frac{R}{\sqrt{N}} \frac{1}{2^m} \sum_{j=1}^{m+1} (-1)^{j-1} \binom{m}{j-1} \frac{e_j - e_{j+1}}{2} \\
    & = -\frac{R}{\sqrt{N}} \frac{1}{2^{m+1}} \sum_{j=1}^{m+1} (-1)^{j-1} \binom{m}{j-1} (e_j - e_{j+1}) \\
    & = -\frac{R}{\sqrt{N}} \frac{1}{2^{m+1}} \left( e_1 + \sum_{j=2}^{m+1} (-1)^{j-1} \left( \binom{m}{j-1} + \binom{m}{j-2} \right) e_j + (-1)^{m+1} e_{m+2} \right) \\
    & = -\frac{R}{\sqrt{N}} \frac{1}{2^{m+1}} \left( e_1 + \sum_{j=2}^{m+1} (-1)^{j-1} \binom{m+1}{j-1} e_j + (-1)^{m+1} e_{m+2} \right) \\
    & = -\frac{R}{\sqrt{N}} \frac{1}{2^{m+1}} \sum_{j=1}^{m+2} (-1)^{j-1} \binom{m+1}{j-1} e_j
\end{align*}
where for the second-to-last equality we use the combinatorial identity $\binom{m}{j-1} + \binom{m}{j-2} = \binom{m+1}{j-1}$.
This completes the induction, and proves \eqref{eqn:necessary-condition-Gmp1y0-expression}.
Now we plug \eqref{eqn:necessary-condition-Gmp1y0-expression} into \eqref{eqn:necessary-condition-GyNm1-expression} and use $P(N-1,m;2H) = 2^m P(N-1,m;H)$ to cancel out the $2^m$ factor to obtain
\begin{align*}
    Gy_{N-1} & = -\frac{R}{\sqrt{N}} \sum_{m=0}^{N-1} (-1)^m P(N-1,m; H) \sum_{j=1}^{m+1} (-1)^{j-1} \binom{m}{j-1} e_j \\
    & = -\frac{R}{\sqrt{N}} \sum_{j=1}^N \left[ \sum_{m=j-1}^{N-1} (-1)^{m+j-1} \binom{m}{j-1} P(N-1,m; H) \right] e_j .
\end{align*}
Therefore, \eqref{eqn:necessary-condition-projection-identity} holds if and only if
\begin{align}
\label{eqn:necessary-P-invariant-equations}
    \sum_{m=j-1}^{N-1} (-1)^{m+j-1} \binom{m}{j-1} P(N-1,m; H) = \frac{1}{N}
\end{align}
for $j=1,\dots,N$.
Starting with the case $j=N$, which yields
\[
    \binom{N-1}{N-1} P(N-1,N-1; H) = \frac{1}{N} \iff P(N-1,N-1; H) = \frac{1}{N} = \frac{1}{N} \binom{N}{N} ,
\]
we can solve \eqref{eqn:necessary-P-invariant-equations} in the order of $j=N,N-1,\dots,1$ to determine the values of each $P(N-1,j; H)$---more formally, the system of linear equations \eqref{eqn:necessary-P-invariant-equations} for $j=N,\dots,1$ having $P(N-1,m;H)$ ($m=N-1,N-2,\dots,0$) as variables, is lower triangular with unit diagonal---so the values of $P(N-1,m;H)$ satisfying \eqref{eqn:necessary-P-invariant-equations} uniquely exist. 
Now consider the combinatorial identity
\begin{align*}
    \sum_{m=j-1}^{N-1} (-1)^{m+j-1} \binom{m}{j-1} \binom{N}{m+1} = 1 
\end{align*}
for any fixed $j=1,\dots,N$, which follows by setting $a=-j$, $b=N$ and $c=N-j$ in \Cref{lemma:chu-vandermonde}---the standard Chu-Vandermonde identity derived from the binomial series expansion, provided in Appendix~\ref{section:combinatorial-lemmas}---and then using the identities $\binom{-j}{i} = (-1)^i \binom{i+j-1}{i} = (-1)^i \binom{i+j-1}{j-1}$, applying the change of index $m=i+j-1$ and using $\binom{N}{N-m-1} = \binom{N}{m+1}$.
This shows that \eqref{eqn:necessary-P-invariant-equations} holds if and only if $P(N-1,m;H) = \frac{1}{N} \binom{N}{m+1}$ for $m=0,\dots,N-1$, which proves \cref{theorem:H-invariance-necessity}.
Note that in the theorem statement, we do not include the case $m=0$ which gives $P(N-1,0;H) = 1$, as it holds trivially by definition.

\end{proof}

Although \cref{theorem:H-invariance-necessity} is only a partial characterization result, establishing the necessity of H-invariance, it already has the following interesting implication.

\begin{definition}
\label{def:anytime-optimal}
We say that an H-matrix representable algorithm of the form \eqref{eqn:H-matrix-representation} with predetermined $h_{k+1,j+1} \in \reals$ (defined for all $k=0,1,2,\dots$ and $j=0,1,\dots,k$) is \emph{anytime optimal} if
\begin{align}
\label{eqn:anytime-optimal}
    \sqnorm{y_k - \opT y_k} \le \frac{4\sqnorm{y_0 - y_\star}}{(k+1)^2}
\end{align}
holds for all $k=0,1,\dots$, for any nonexpansive operator $\opT\colon \reals^d \to \reals^d$ and an initial point $y_0 \in \reals^d$.
If $N\in \mathbb{N}$ and \eqref{eqn:anytime-optimal} holds for all $k=N-1,N,\dots$, i.e., for all iterates including and after $y_{N-1}$, then we say that the algorithm is \emph{anytime-beyond-$N$-steps optimal}.    
\end{definition}

\begin{corollary}
\label{corollary:anytime-optimal-algorithm-is-unique}
OHM is the only anytime optimal algorithm of the form~\eqref{eqn:H-matrix-representation}.
More generally, for any $N\ge 1$, if an algorithm of the form~\eqref{eqn:H-matrix-representation} is anytime-beyond-$N$-steps optimal, then it takes the form
\begin{align*}
    y_{k+1} = \frac{1}{k+2} y_0 + \frac{k+1}{k+2} \opT y_k
\end{align*}
for $k=N-1,N,\dots$.
\end{corollary}

\begin{proof}
Suppose that the algorithm is anytime-beyond-$N$-steps optimal.
By \Cref{theorem:H-invariance-necessity}, for any $k=N-1,N,\dots$, we must have $P(k,m) = \frac{1}{k+1}\binom{k+1}{m+1}$ for all $m=1,\dots,k$ (we drop the $H$-dependence throughout this proof).
This implies 
\begin{align*}
    h_{k+1,k+1} = \frac{h_{1,1}h_{2,2}\cdots h_{k+1,k+1}}{h_{1,1}h_{2,2}\cdots h_{k,k}} = \frac{P(k+1,k+1)}{P(k,k)} = \frac{\frac{1}{k+2}}{\frac{1}{k+1}} = \frac{k+1}{k+2}. 
\end{align*}
Next, we show that $h_{k+1,m} = -\frac{1}{k+2} \left( h_{m,m} + h_{m+1,m} + \dots + h_{k,m} \right)$
for each $m=1,\dots,k$ by backward induction on $m$ (starting from $m=k$). 
We state a useful lemma for this purpose, 
whose proof is deferred to Appendix~\ref{section:proof-simple-P-recursion-lemma} because although it is simple, it relies on a notation established within the appendix.

\begin{lemma}
\label{lemma:P-recursion}
For any $k=1,2,\dots$ and $m=2,\dots,k+1$, 
\begin{align*}
    P(k+1,m) = \sum_{j=m-1}^{k} \left( \sum_{i=j+1}^{k+1} h_{i,j+1} \right) P(j,m-1) .
\end{align*}
\end{lemma}

Now to show the base case $m=k$, we use \cref{lemma:P-recursion} to obtain
\begin{align*}
    P(k+1,k) & = \sum_{j=k-1}^{k} \left( \sum_{i=j+1}^{k+1} h_{i,j+1} \right) P(j,k-1) \\
    & = h_{k+1,k+1} P(k,k-1) + \left( h_{k,k} + h_{k+1,k} \right) P(k-1,k-1) ,
\end{align*}
which implies
\begin{align*}
    h_{k+1,k} & = \frac{1}{P(k-1,k-1)} \left( P(k+1,k) - h_{k+1,k+1} P(k,k-1) \right) - h_{k,k} \\
    & = \frac{1}{h_{1,1}h_{2,2}\dots h_{k-1,k-1}} \left( \frac{1}{k+2} \binom{k+2}{k+1} - \frac{k+1}{k+2} \frac{1}{k+1} \binom{k+1}{k} \right) - h_{k,k} \\
    & = \frac{h_{k,k}}{P(k,k)} \frac{1}{k+2} - h_{k,k} \\
    & = -\frac{1}{k+2} h_{k,k}
\end{align*}
where the last equality holds because $P(k,k) = \frac{1}{k+1}$.

Let $2\le m\le k-1$ and assume $h_{k+1,n} = -\frac{1}{k+2} \left( h_{n,n} + h_{n+1,n} + \dots + h_{k,n} \right)$ holds for all $n=m+1,\dots,k$.
Applying \cref{lemma:P-recursion} again, we obtain 
\begin{align}
    P(k+1,m) 
    & = h_{k+1,k+1} P(k,m-1) + \left(h_{k,k} + h_{k+1,k}\right) P(k-1,m-1) + \dots + \left(h_{m+1,m+1} + \dots + h_{k+1,m+1}\right) P(m,m-1) \nonumber\\
    & \quad + \left(h_{m,m} + \dots + h_{k+1,m}\right) P(m-1,m-1) \nonumber\\
    & = \frac{k+1}{k+2} \frac{1}{k+1} \binom{k+1}{m} + \frac{k+1}{k+2} h_{k,k} P(k-1,m-1) + \dots + \frac{k+1}{k+2} \left( h_{m+1,m+1} + \dots + h_{k,m+1} \right) P(m,m-1) \nonumber\\
    & \quad + \left(h_{m,m} + \dots + h_{k+1,m}\right) P(m-1,m-1) \nonumber\\
    & = \frac{1}{k+2} \binom{k+1}{m} + \frac{k+1}{k+2} \sum_{j=m}^{k-1} \left( \sum_{i=j+1}^k h_{i,j+1} \right) P(j,m-1) + \left(h_{m,m} + \dots + h_{k+1,m}\right) P(m-1,m-1) \label{eqn:OHM-uniqueness-theorem-induction}
\end{align}
where we have used the induction hypothesis for the second equality. Now applying \cref{lemma:P-recursion} once again, with $k-1$ in place of $k$, we have 
\begin{align*}
    & \sum_{j=m-1}^{k-1} \left( \sum_{i=j+1}^k h_{i,j+1} \right) P(j,m-1) = P(k,m) = \frac{1}{k+1} \binom{k+1}{m+1} \\
    & \implies \sum_{j=m}^{k-1} \left( \sum_{i=j+1}^k h_{i,j+1} \right) P(j,m-1) = \frac{1}{k+1} \binom{k+1}{m+1} - \left( \sum_{i=m}^k h_{i,m} \right) P(m-1,m-1) .
\end{align*}
Plugging this into \eqref{eqn:OHM-uniqueness-theorem-induction} and using the H-invariance condition for $P(k+1,m)$, we obtain
\begin{align*}
    \frac{1}{k+2} \binom{k+2}{m+1} & = P(k+1,m) \\
    & = \frac{1}{k+2} \binom{k+1}{m} + \frac{k+1}{k+2} \left[ \frac{1}{k+1} \binom{k+1}{m+1} - \left( \sum_{i=m}^k h_{i,m} \right) P(m-1,m-1) \right] \\
    & \quad + \left(h_{m,m} + \dots + h_{k+1,m}\right) P(m-1,m-1) \\
    & = \frac{1}{k+2} \left[ \binom{k+1}{m} + \binom{k+1}{m+1} \right] + P(m-1,m-1) \left[ \frac{1}{k+2} \left( h_{m,m} + \dots + h_{k,m} \right) + h_{k+1,m} \right] .
\end{align*}
Because $\binom{k+2}{m+1} = \binom{k+1}{m} + \binom{k+1}{m+1}$ and $P(m-1,m-1) \ne 0$, we conclude that $h_{k+1,m} = -\frac{1}{k+2} \left( h_{m,m} + \dots + h_{k,m} \right)$.

Finally, assuming the induction hypothesis for all $m=2,\dots,k$, we have 
\begin{align*}
    h_{1,1} + h_{2,1} + \dots + h_{k+1,1} & = P(k+1,1) - \sum_{j=2}^{k} \sum_{m=j}^{k+1} h_{m,j} - h_{k+1,k+1} \\
    & = \frac{1}{k+2} \binom{k+2}{2} - \sum_{j=2}^k \frac{k+1}{k+2} \sum_{m=j}^k h_{m,j} - \frac{k+1}{k+2} \\
    & = \frac{k+1}{2} - \frac{k+1}{k+2} \left( P(k,1) - h_{1,1} - h_{2,1} - \dots - h_{k,1} \right) - \frac{k+1}{k+2} \\
    & = \frac{k+1}{2} - \frac{k+1}{k+2} \frac{1}{k+1} \binom{k+1}{2} + \frac{k+1}{k+2} (h_{1,1} + \dots + h_{k,1}) - \frac{k+1}{k+2} \\
    & = \frac{k+1}{k+2} (h_{1,1} + \dots + h_{k,1})
\end{align*}
and thus $h_{k+1,1} = -\frac{1}{k+2} (h_{1,1} + \dots + h_{k,1})$, completing the induction.

Now note that by the update rule \eqref{eqn:H-matrix-representation},
\begin{align*}
    y_0 - y_k & = \sum_{i=1}^{k} \sum_{j=1}^{i} h_{i,j} (y_{j-1} - \opT y_{j-1}) \\
    & = \sum_{j=1}^k \sum_{i=j}^k h_{i,j} (y_{j-1} - \opT y_{j-1}) \\
    & = \sum_{j=1}^k (h_{j,j} + h_{j+1,j} + \dots + h_{k,j}) (y_{j-1} - \opT y_{j-1}) .
\end{align*}
This implies
\begin{align*}
    y_{k+1} & = y_k - \sum_{j=1}^{k+1} h_{k+1,j} (y_{j-1} - \opT y_{j-1}) \\
    & = y_k + \frac{1}{k+2} \sum_{j=1}^k (h_{j,j} + h_{j+1,j} + \dots + h_{k,j}) (y_{j-1} - \opT y_{j-1}) - \frac{k+1}{k+2} (y_k - \opT y_k) \\
    & = y_k + \frac{1}{k+2} (y_0 - y_k) - \frac{k+1}{k+2} (y_k - \opT y_k) \\
    & = \frac{1}{k+2} y_0 + \frac{k+1}{k+2} \opT y_k .
\end{align*}
\end{proof}

\noindent
\textbf{Remark.} As a converse to \cref{corollary:anytime-optimal-algorithm-is-unique}, running any H-matrix representable algorithm for $N-1$ iterations satisfying the conditions of \cref{theorem:main-result}, and using the OHM recursion $y_{k+1} = \frac{1}{k+2} y_0 + \frac{k+1}{k+2} \opT y_k$ thereafter (for $k=N-1,N,\dots$) indeed yields an anytime-beyond-$N$-steps optimal algorithm.
This can be seen from the proof template identity~\eqref{eqn:H-symmetry-sufficiency-core-identity}, which implies
\begin{align*}
    N^2 \sqnorm{g_N} + N \inprod{g_N}{x_N - y_0} = N \left( N \sqnorm{g_N} + \inprod{g_N}{x_N - y_0} \right) \le 0 ,
\end{align*}
together with the fact that the \textit{Lyapunov function}
\begin{align*}
    V_k = k^2 \sqnorm{g_k} + k\inprod{g_k}{x_k - y_0}
\end{align*}
is nonincreasing with the OHM updates (for $k=N,N+1,\dots$), because
\begin{align}
\label{eqn:OHM-Lyapunov}
    V_k - V_{k+1} = k(k+1) \inprod{x_{k+1} - x_k}{g_{k+1} - g_k} \ge 0 .
\end{align}
This has been proved in \cite{RyuYin2022_largescale} for the pure OHM, but even in our case---where we have the OHM recursion only after $N-1$ updates---the same derivation applies, because that Lyapunov analysis does not depend on the history (previous iterates) of the algorithm besides $y_0$.

\paragraph{Potential extensions of anytime optimality notion.}
\cref{def:anytime-optimal} is a natural way of defining anytime optimal algorithms, but broader definitions can be considered.
For example, while we currently require that each $y_k$ given by \eqref{eqn:H-matrix-representation} should have the optimal guarantee, 
one may allow generating an \textit{auxiliary sequence} $z_k$ from $y_k$ by operations such as extrapolation or averaging (but without additional evaluation of $\opT$), and define the algorithm as anytime optimal if it admits a choice of auxiliary sequence $\sqnorm{z_k - \opT z_k} \le \frac{4\sqnorm{y_0 - y_\star}}{(k+1)^2}$ for all $k=0,1,\dots$.
Whether OHM is still unique even with this generalized notion of anytime optimality is an interesting question which we leave for future work.
As another possible extension, one may study algorithms that attain optimal guarantees only along an increasing sequence of iteration numbers $k_0 < k_1 < \cdots$, and characterize which algorithms beyond OHM possess such \emph{quasi-anytime} optimality.

\section{Completing the characterization of optimality}
\label{section:completing-chracterization}

In the previous section, we have proved that the H-invariance conditions $P(N-1,m; H) = \frac{1}{N} \binom{N}{m+1}$ ($m=1,\dots,N-1$) are necessary for exact optimality.
In this section, we complete the proof of this paper's main result, \cref{theorem:main-result}.
As a first step, in \cref{subsection:optimality-and-lambda} we prove a partial converse (\cref{theorem:lambda-characterization}) of the previous necessity result---an algorithm satisfying H-invariance achieves exact optimality \emph{under the additional nonnegativity conditions $\lambda^\star_{k,j}(H) \ge 0$} on H-certificates.
Then in the second step, we prove \cref{theorem:negative-lambda-suboptimality} in \cref{subsection:necessity-of-nonnegative-H-certificates}: provided that H-invariance holds, the conditions $\lambda^\star_{k,j}(H) \ge 0$ are \emph{necessary} for exact optimality.
Combining these results altogether completes the proof of \cref{theorem:main-result}.

\subsection{Optimality proof under H-invariance and nonnegative H-certificates}
\label{subsection:optimality-and-lambda}

\subsubsection{$\lambda^\star(H)$ as a solution to a linear system}

We have seen in \cref{subsection:H-certificates} that if the key identity \eqref{eqn:H-symmetry-sufficiency-core-identity} is satisfied with $\lambda_{k,j} \ge 0$, then it implies the optimal rate.
To clarify, this identity indicates the equality between formal quadratic-form-like expressions involving the vector variables $g_1,\dots,g_N$.
That is, by expanding~\eqref{eqn:H-symmetry-sufficiency-core-identity} and regrouping the terms, we can rewrite \eqref{eqn:H-symmetry-sufficiency-core-identity} in the form
\begin{align}
\label{eqn:H-symmetry-sufficiency-core-identity-vector-quadratic-form}
    0 = \sum_{k=1}^N \sum_{j=1}^{k} s_{k,j}(H,\lambda) \inprod{g_k}{g_j} ,
\end{align}
and the equality is equivalent to the system of equations $s(H,\lambda) = 0$, i.e., $s_{k,j} (H,\lambda) = 0$ for all $k=1,\dots,N$ and $j=1,\dots,k$.
Below, we provide the expressions for $s_{k,j}(H,\lambda)$, which reveal that $s(H,\lambda) = 0$ is a linear system in $\lambda$.

\begin{lemma}
\label{lemma:sNj-characterization}
The $s_{k,j}(H,\lambda)$'s in \eqref{eqn:H-symmetry-sufficiency-core-identity-vector-quadratic-form} have the explicit expressions
\begin{align}
    s_{N,N}(H,\lambda) & = N - 1 - \sum_{j=1}^{N-1} \lambda_{N,j} \label{eqn:sNN-expression} \\
    s_{N,j}(H,\lambda) & = 2 \left( \lambda_{N,j} - \sum_{i=j}^{N-1} \sum_{k=1}^{i} h_{i,j} \lambda_{N,k} - \sum_{i=j}^{N-1} h_{i,j} \right), \quad j=1,\dots,N-1 \label{eqn:sNj-expression} \\
    s_{k,k}(H,\lambda) & = \sum_{i=k+1}^N \left( \sum_{j=k}^{i-1} 2 h_{j,k} - 1 \right) \lambda_{i,k} - \sum_{j=1}^{k-1} \lambda_{k,j}, \quad k = 1,\dots,N-1 \label{eqn:skk-expression} \\
    s_{k,j}(H,\lambda) & = 2 \left( \lambda_{k,j} - \sum_{n=j}^{k-1} h_{n,j} \sum_{i=1}^n \lambda_{k,i} + c_{k,j} \right), \quad j=1,\dots,k-1 \label{eqn:skj-expression} 
\end{align}    
where $c_{k,j} = \sum_{m=k+1}^N \sum_{n=k}^{m-1} \left(h_{n,j} \lambda_{m,k} + h_{n,k} \lambda_{m,j}\right)$.
\end{lemma}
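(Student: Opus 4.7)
My plan is to derive \cref{lemma:sNj-characterization} by straight expansion: express every iterate as $y_0$ plus an explicit linear combination of $g_1,\dots,g_N$, substitute into the right-hand side of \eqref{eqn:H-symmetry-sufficiency-core-identity}, open up each inner product into pairs $\inprod{g_p}{g_q}$, and read off the coefficient of $\inprod{g_k}{g_j}$ for each ordered pair $j \le k$. No nontrivial identity is required; the whole lemma is a careful bookkeeping statement, and in particular this presentation makes the linearity of $s(H,\lambda)=0$ in $\lambda$ transparent.

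First I would unroll the recursion. Combining $y_{k+1} = y_k - \sum_{j=0}^{k} 2h_{k+1,j+1} g_{j+1}$ with $x_k = y_{k-1} - g_k$ yields, by induction,
\[
    x_k - y_0 = -g_k - 2\sum_{j=1}^{k-1}\Bigl(\sum_{i=j}^{k-1} h_{i,j}\Bigr) g_j, \qquad x_k - x_{j'} = g_{j'} - g_k - 2\sum_{i=j'}^{k-1}\sum_{m=1}^{i} h_{i,m}\, g_m \,\,\text{for } j' < k.
\]
Substituting into \eqref{eqn:H-symmetry-sufficiency-core-identity}, the term $N\sqnorm{g_N}$ contributes only to $\inprod{g_N}{g_N}$; the term $\inprod{g_N}{x_N - y_0}$ contributes $-\sqnorm{g_N}$ and $-2\sum_{i=j}^{N-1} h_{i,j}\,\inprod{g_N}{g_j}$; and each summand $\lambda_{k,j'}\inprod{x_k - x_{j'}}{g_k - g_{j'}}$ expands to
\[
    -\lambda_{k,j'}\sqnorm{g_k} - \lambda_{k,j'}\sqnorm{g_{j'}} + 2\lambda_{k,j'}\inprod{g_k}{g_{j'}} - 2\lambda_{k,j'}\sum_{i=j'}^{k-1}\sum_{m=1}^{i} h_{i,m}\bigl(\inprod{g_m}{g_k} - \inprod{g_m}{g_{j'}}\bigr).
\]

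Next I would collect coefficients, treating four cases separately since $k=N$ and $j=k$ each introduce special contributions. The $s_{N,N}$ coefficient gathers $N$ from $N\sqnorm{g_N}$, $-1$ from $\inprod{g_N}{x_N-y_0}$, and $-\lambda_{N,j'}$ from each $-\lambda_{N,j'}\sqnorm{g_N}$ piece, giving~\eqref{eqn:sNN-expression}. The $s_{k,k}$ for $k<N$ combines $-\sum_{j<k}\lambda_{k,j}$ (from the $-\sqnorm{g_k}$ terms inside $\lambda_{k,j}$), $-\sum_{i>k}\lambda_{i,k}$ (from the $-\sqnorm{g_{j'}}$ terms with $j' = k$ inside $\lambda_{i,k}$), and $2\sum_{i>k}\lambda_{i,k}\sum_{n=k}^{i-1} h_{n,k}$ (from the cross terms with $j' = m = k$), producing~\eqref{eqn:skk-expression}. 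For $s_{N,j}$ with $j<N$, the contribution from $\inprod{g_N}{x_N-y_0}$ supplies the $-2\sum_{i=j}^{N-1}h_{i,j}$ piece, while the $\lambda_{N,j'}$ terms give $2\lambda_{N,j}$ together with cross terms $-2\lambda_{N,j'}\sum_{i'=\max(j,j')}^{N-1} h_{i',j}$ summed over $j' = 1,\dots,N-1$; swapping the order of summation and using $\max(j,j') \le i'$ iff $j' \le i'$ (since $i' \ge j$) collapses this to $-2\sum_{i=j}^{N-1} h_{i,j}\sum_{k'=1}^{i}\lambda_{N,k'}$, yielding~\eqref{eqn:sNj-expression}.

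The main obstacle, and the only place where a subtle error could creep in, is the off-diagonal $s_{k,j}$ with $j<k<N$. This coefficient receives contributions from \emph{two} symmetric sources: the $\lambda_{k,j'}$ summands (in which $g_k$ appears directly as the outer index) and the $\lambda_{m,j'}$ summands with $m>k$ (in which the needed $\inprod{g_k}{g_j}$ only arises through the cross term $\inprod{g_m}{g_{j'}}$ when $j'\in\{j,k\}$ and $m$ in the $x_m - x_{j'}$ expansion supplies a $g_k$ or $g_j$). The within-$k$ part, after reversing the order of summation and again using $\max(j,j') \le i'$ iff $j' \le i'$, reduces exactly to $2\lambda_{k,j} - 2\sum_{n=j}^{k-1} h_{n,j}\sum_{i=1}^{n}\lambda_{k,i}$. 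The from-$m$ part sorts into two mirror cases — either $(j',m)=(j,k)$, contributing $2\sum_{m>k}\lambda_{m,j}\sum_{n=k}^{m-1} h_{n,k}$, or $(j',m)=(k,j)$, contributing $2\sum_{m>k}\lambda_{m,k}\sum_{n=k}^{m-1} h_{n,j}$ — which together comprise $2c_{k,j}$. Combining the two gives~\eqref{eqn:skj-expression}, completing the lemma.
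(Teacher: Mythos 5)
Your proposal is correct and follows essentially the same route as the paper's proof, which likewise unrolls $x_k = y_0 - g_k - \sum_{i=1}^{k-1}\sum_{j=1}^{i}2h_{i,j}g_j$, substitutes into \eqref{eqn:H-symmetry-sufficiency-core-identity}, and collects the coefficient of each $\inprod{g_k}{g_j}$; the paper simply leaves the bookkeeping implicit where you carry it out in full. Your case analysis (including the two mirror sources feeding $c_{k,j}$ and the summation swaps using $\max(j,j')\le i \iff j'\le i$) checks out against all four displayed expressions.
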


\paragraph{Remark.} Note that $c_{k,j}$ depends only on $\lambda_{m,\cdot}$'s with $m>k$.

\begin{proof}
For each $x_k$ ($k=1,\dots,N$), we have
\begin{align*}
    x_k = y_{k-1} - g_k = y_0 + \sum_{i=1}^{k-1} (y_i - y_{i-1}) - g_k = y_0 - g_k - \sum_{i=1}^{k-1} \sum_{j=1}^i 2h_{i,j} g_{j} .
\end{align*}
Plugging this expression into \eqref{eqn:H-symmetry-sufficiency-core-identity} in places of $x_N, x_k, x_j$ (with appropriate choice of summation indices to avoid overlapping), expanding all inner products and gathering the coefficients of each $\inprod{g_k}{g_j}$, we obtain the desired expressions for $s_{k,j}$.
\end{proof}

The remaining step is to solve this symbolic linear system $s(H,\lambda) = 0$ for $\lambda$.
That is, we express the (unique) solution $\lambda = \lambda^\star (H)$ to $s(H,\lambda) = 0$ as an explicit function of $H$.
This is the part that requires a significant technical effort;
we provide the explicit form of $\lambda^\star(H)$ in the following, while deferring the lengthy proof details to Appendix~\ref{section:lambda-characterization-proof}.

\subsubsection{Explicit characterization of $\lambda^\star(H)$}
\label{subsubsection:lambda-characterization}

We define some additional expressions used for characterizing $\lambda^\star(H)$.
First, define
\begin{align*}
    D(N;H) = \sum_{m=0}^{N-1} (-1)^m P(N-1,m; H) .
\end{align*}
Next, we define the \textit{Q-functions}, which can be viewed as partial H-invariants, as
\begin{align*}
    Q(k,m,j; H) = \sum_{\substack{ j = j(1) \le i(1) < j(2) \le i(2) < \cdots \le i(m-1) < j(m) \le i(m) \le k}} \prod_{r=1}^m h_{i(r),j(r)} 
\end{align*}
for $k=1,\dots,N-1$ and $m,j=1,\dots,k$.
Each $Q(k,m,j; H)$ is the partial sum of terms that constitute $P(k,m; H)$ such that its smallest column index $j(1)$ equals $j$.
Note that when $j > k-m+1$, the summation defining $Q(k,m,j; H)$ becomes vacuous, and in this case we define $Q(k,m,j;H) = 0$.
For later convenience, we also set $Q(k,m,j;H) = 0$ for $m\ge k+1$.

In the following, for simpler notation, we will fix $H$ and drop the $H$-dependency within $P, Q, D, s$ and $\lambda$.

\begin{theorem}
\label{theorem:lambda-characterization}
Suppose $P(N-1, m) = \frac{1}{N} \binom{N}{m+1}$ for $m=1,\dots,N-1$.
Then the unique solution to the linear system $s(\lambda) = 0$ is given by: 
\begin{align*}
    \lambda_{N,j}^\star = \frac{1}{D(N)} \sum_{m=1}^{N-1} (-1)^{m-1} Q(N-1,m,j) = N\sum_{m=1}^{N-j} (-1)^{m-1} Q(N-1,m,j) , \quad j=1,\dots,N-1
\end{align*}
and for $k=1,\dots,N-1$,
\begin{align*}
    \lambda_{k,j}^\star & = \frac{1}{D(N)} \sum_{\ell=1}^{N-1} \sum_{m=1}^{N-1} (-1)^{\ell+m-1} \binom{\ell+m}{m} Q(N-1,\ell,j) Q(N-1,m,k) \\
    & = N\sum_{\ell=1}^{N-j} \sum_{m=1}^{N-k} (-1)^{\ell+m-1} \binom{\ell+m}{m} Q(N-1,\ell,j) Q(N-1,m,k) , \quad j=1,\dots,k-1 .
\end{align*}
\end{theorem}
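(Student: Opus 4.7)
My plan is to exploit the triangular structure implicit in Lemma~\ref{lemma:sNj-characterization}: the equations $s_{N,j}(H,\lambda)=0$ for $j=1,\dots,N-1$ involve only $\lambda_{N,\cdot}$, and for each $k<N$, the equations $s_{k,j}(H,\lambda)=0$ for $j<k$ involve $\lambda_{k,\cdot}$ together with $\lambda_{m,\cdot}$ for $m>k$ (entering only through $c_{k,j}$, which is already determined at that stage of a top-down solve). So I would first solve for $\lambda_{N,\cdot}$ and then for $\lambda_{k,\cdot}$ in the order $k=N-1,N-2,\dots,1$. Within each fixed-$k$ block, introducing the cumulative sums $u_j = \sum_{i=1}^j \lambda_{k,i}$ converts the equations into a linear recurrence in $(u_j)$ whose coefficient matrix should be invertible under H-invariance. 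Together, this gives existence and uniqueness of $\lambda^\star$ independent of the explicit formula.

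Next, the explicit formulas must be verified as solutions. H-invariance first yields $D(N)=1/N$ (via the alternating binomial identity $\sum_{k=0}^{N}(-1)^k\binom{N}{k}=0$), reconciling the two equivalent expressions in the theorem; the tighter summation upper limits are justified by $Q(N-1,m,j)=0$ whenever $m>N-j$. For $\lambda_{N,j}^\star$, I would substitute the Q-function formula into $s_{N,j}=0$ and use the recursive identity $Q(N-1,m,j) = \sum_{i=j}^{N-m} h_{i,j} \sum_{j'>i} Q(N-1,m-1,j')$ (immediate from the definition of $Q$), which after telescoping the alternating sum in $m$ should reproduce the terms $h_{i,j}\sum_k \lambda_{N,k}$ appearing in the recurrence. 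For $\lambda_{k,j}^\star$ with $k<N$, a similar but two-parameter identity is needed, where the product $Q(N-1,\ell,j)Q(N-1,m,k)$ interacts with the defining sums in $s_{k,j}$, and the binomial weight $\binom{\ell+m}{m}$ arises from enumerating interleavings of the two Q-chains, one starting at column $j$ and one starting at column $k$.

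The main obstacle will be the combinatorial verification of the two-parameter formula for $\lambda_{k,j}^\star$. The binomial weights $\binom{\ell+m}{m}$ strongly suggest a Chu--Vandermonde-type reduction parallel to the one used in the proof of Theorem~\ref{theorem:H-invariance-necessity}: I would attempt to group summation indices by $\ell+m$ and collapse using that identity. Secondary tasks include verifying the consistency equations $s_{N,N}=0$ and $s_{k,k}=0$ (which are not used to determine $\lambda^\star$) via substitution of the explicit formulas together with the H-invariance identity $P(N-1,m)=\sum_{j=1}^{N-m} Q(N-1,m,j)=\frac{1}{N}\binom{N}{m+1}$, and confirming invertibility of each block's coefficient matrix---which small-$N$ calculations suggest has determinant proportional to $P(N-1,N-1)=1/N$, hence nonzero.
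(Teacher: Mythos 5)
Your overall strategy coincides with the paper's: verify the explicit formulas by direct substitution into the equations of \cref{lemma:sNj-characterization} using the recursion $Q(N-1,m+1,j)=\sum_{\ell>j}Q(N-1,m,\ell)\sum_{i=j}^{\ell-1}h_{i,j}$, the identity $P(N-1,m)=\sum_j Q(N-1,m,j)$, and Chu--Vandermonde collapses of the alternating binomial sums; then obtain uniqueness by solving the blocks top-down in $k$. The verification half is only sketched (the two-parameter manipulation for $s_{k,j}$ with $j<k<N$, which is the bulk of the paper's Parts 2--4, is left as "the main obstacle"), but the tools you name are exactly the ones that make it work.

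There is, however, a concrete gap in your uniqueness argument. You declare the diagonal equations $s_{k,k}=0$ to be mere consistency checks "not used to determine $\lambda^\star$", and propose to solve each block $k$ from the $k-1$ off-diagonal equations alone via the cumulative sums $u_j=\sum_{i\le j}\lambda_{k,i}$. That reformulation produces precisely the bidiagonal-plus-upper-triangular matrix $\vL_k$ of the paper, whose determinant is $D(k)=\sum_{m}(-1)^mP(k-1,m;H)$ (\cref{lemma:coefficient-matrix-determinant-is-DN}). For $k=N$ this equals $1/N\neq 0$ by H-invariance, but for $k<N$ the quantity $D(k)$ involves only the leading $(k-1)\times(k-1)$ block of $H$ and is \emph{not} constrained by the hypotheses: one can have $D(k)=0$ (e.g.\ $h_{1,1}=1$ forces $D(2)=0$ and is compatible with H-invariance for $N\ge 3$), so your block coefficient matrix can be singular and uniqueness fails by this route. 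Relatedly, your identification of the top block's determinant with $P(N-1,N-1)$ is off; it is $D(N)$, which merely happens to also equal $1/N$. The paper's fix is to build the square subsystem for each $k<N$ from the diagonal equation $s_{k,k}=0$ together with $s_{k,j}=0$ for $j=2,\dots,k-1$ (discarding $j=1$); after elementary row operations that matrix has determinant exactly $1$, independent of $H$. You need the diagonal equations for uniqueness, not just for consistency.
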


In the secondary expressions for $\lambda_{k,j}^\star$ in \cref{theorem:lambda-characterization}, the scope of summation is reduced from $1,\dots,N-1$ to $1,\dots,N-j$ or $1,\dots,N-k$. This holds because $Q(N-1,m,j) = 0$ when $m+j>N$ (in that case, the corresponding summation becomes vacuous).

In the proof of \cref{theorem:lambda-characterization}, presented in Appendix~\ref{section:lambda-characterization-proof}, we first algebraically verify that the formulas for $\lambda^\star(H)$ presented above indeed solves $s(\lambda) = 0$.
This, even without the uniqueness part, already establishes that $\lambda^\star$ satisfies \eqref{eqn:H-symmetry-sufficiency-core-identity};
hence, the H-invariance identities, combined with nonnegativity of H-certificates $\lambda^\star$, imply optimality.

Then, it remains to prove uniqueness of $\lambda^\star(H)$ as a solution.
The key idea for showing this is to subdivide the system $s(\lambda) = 0$ into $s_{k,j}(\lambda) = 0$, for each fixed $k$ and $j=1,\dots,k$, and run $k$ backwards from $N$ to $1$.
We sequentially solve the partial systems $s_{k,j}(\lambda) = 0$, viewing only $\lambda_{k,1}, \dots, \lambda_{k,k-1}$ as variables and regarding $\lambda_{m,\cdot}$ with $m>k$ as constants, as their values are already determined from previous rounds.
Then we analyze the determinants of those partial systems to show that their solutions must be unique, if they are consistent.
In fact, we also need (a slightly generalized version of) this uniqueness result in \cref{subsection:necessity-of-nonnegative-H-certificates}---see \cref{lemma:lambda-are-unique-solutions} and the remark following it---so we defer the detailed, self-contained proof of uniqueness to that point.

\subsection{Necessity of nonnegative H-certificates for optimality}
\label{subsection:necessity-of-nonnegative-H-certificates}

We now present \cref{theorem:negative-lambda-suboptimality}, the final ingredient to complete characterization of optimal H-matrices.
Here we provide a proof outline capturing all essential ideas and lemmas that need to be proved, and provide all missing details in Appendix~\ref{section:H-certificate-necessity-proof}.

\begin{theorem}
\label{theorem:negative-lambda-suboptimality}
Suppose $P(N-1,m; H) = \frac{1}{N} \binom{N}{m+1}$ for $m=1,\dots,N-1$. 
If $\lambda_{i_0, j_0}^\star(H) < 0$ for some $1\le j_0 < i_0 \le N$, then there exists a nonexpansive operator $\opT \colon \reals^{N+1} \to \reals^{N+1}$ such that $\norm{y_0 - y_\star} = 1$ and $\sqnorm{y_{N-1} - \opT y_{N-1}} > \frac{4}{N^2}$.
\end{theorem}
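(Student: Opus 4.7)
The plan is to construct a witness nonexpansive operator $\opT$ on $\reals^{N+1}$ by designing iterates $\{(x_k,g_k)\}_{k=1}^N$ so that identity \eqref{eqn:H-symmetry-sufficiency-core-identity} yields strict violation of the $\frac{4}{N^2}$ bound. The key algebraic observation is that when $\lambda^\star_{i_0,j_0}(H) < 0$, any configuration with a strictly positive slack at the $(i_0,j_0)$-monotonicity inequality and tight equalities elsewhere contributes a strictly negative term in \eqref{eqn:H-symmetry-sufficiency-core-identity}, forcing $\sqnorm{g_N} > \frac{1}{N^2}$ upon rearrangement.

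Concretely, I would engineer vectors $g_1,\ldots,g_N,y_0,y_\star \in \reals^{N+1}$---with $x_k = y_{k-1}-g_k$ and $y_k = y_0 - \sum_{j=1}^{k} 2h_{k,j} g_j$ determined from the $g$'s and $y_0$---satisfying $\|y_0-y_\star\|=1$; $\inner{g_k}{x_k-y_\star}=0$ for all $k$; $\inner{x_k-x_j}{g_k-g_j}=0$ for all $j<k$ except $\inner{x_{i_0}-x_{j_0}}{g_{i_0}-g_{j_0}}=\epsilon>0$; and $g_N = c(y_\star-y_0)$ for some scalar $c$. Under these choices, identity~\eqref{eqn:H-symmetry-sufficiency-core-identity} collapses to the scalar equation $Nc^2 + c + \lambda^\star_{i_0,j_0}(H)\,\epsilon = 0$, whose root continuous (in $\epsilon$) from the baseline value $c=-\frac{1}{N}$ at $\epsilon=0$ is $c = \frac{-1-\sqrt{1+4N|\lambda^\star_{i_0,j_0}(H)|\epsilon}}{2N}$, satisfying $|c|>\frac{1}{N}$ strictly. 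This yields $\sqnorm{y_{N-1}-\opT y_{N-1}} = 4c^2 > \frac{4}{N^2}$, as desired.

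To realize such a configuration as an actual operator, I would start from the Park-Ryu baseline of \cref{section:H-invariance-necessity}, embedded in $\reals^N \subset \reals^{N+1}$. At $\epsilon=0$ this achieves all the stated equalities with $c = -\frac{1}{N}$, since H-invariance combined with the tightness of every step in the optimality derivation forces all monotonicity slacks to vanish on this baseline. A small perturbation $g_k \mapsto g_k + \delta_k e_{N+1}$ along the extra coordinate modifies each slack by a quadratic form in $\{\delta_k - \delta_j\}$; I would tune the $\{\delta_k\}$ so that only the $(i_0,j_0)$-slack becomes positive. Because the perturbation lives orthogonally to the baseline span, the Gram matrix of $\{g_1,\dots,g_N,y_0,y_\star\}$ remains positive semidefinite automatically. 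With pairwise monotonicity in place (as equalities or $\ge 0$), the maximally monotone interpolation theorem for finite sets of point--gradient pairs extends $\{(x_k,g_k)\}_{k=1}^N \cup \{(y_\star,0)\}$ to a maximally monotone $\opA$, and $\opT = 2(\opI+\opA)^{-1}-\opI$ is the desired nonexpansive witness.

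The main obstacle will be the perturbation step: showing that $\{\delta_k\}$ can be chosen to make exactly one slack strictly positive while keeping the remaining $\binom{N}{2}-1$ at zero, and with sign such that the continuous branch of $c$ ends up below $-\frac{1}{N}$. Naive dimension counting ($N$ parameters against $\binom{N}{2}-1$ equality constraints) suggests over-determination, but the explicit algebraic structure of Park-Ryu iterates under H-invariance creates heavy redundancy among the slack equations, and this is where the technical heart of the argument lies. A cleaner but less explicit alternative is an SDP-duality argument: the worst-case PEP for a fixed H-matrix is a semidefinite program whose dual, under H-invariance, admits $\lambda^\star(H)$ as its unique KKT multipliers; infeasibility of the dual (implied by $\lambda^\star_{i_0,j_0}<0$) then forces the primal optimum strictly above $\frac{4}{N^2}$ by weak duality, and the witness $\opT$ is extracted from any primal optimum via the same monotone interpolation theorem.
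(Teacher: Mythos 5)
Your high-level strategy is the right one and matches the paper's: start from the Park--Ryu tight instance (where every monotonicity inequality holds with equality), open up a strictly positive slack only at the $(i_0,j_0)$ inequality, and let the identity \eqref{eqn:H-symmetry-sufficiency-core-identity} with $\lambda^\star_{i_0,j_0}<0$ force $\sqnorm{g_N}>\frac{1}{N^2}$. Your scalar computation $Nc^2+c+\lambda^\star_{i_0,j_0}\epsilon=0$ is essentially correct granted the configuration exists. But the configuration's existence is exactly where your argument has a genuine gap, and the mechanism you propose for it does not work. Perturbing each $g_k$ by $\delta_k e_{N+1}$ gives only $N$ free parameters, while you must keep roughly $\binom{N}{2}+N$ slacks (all $\inprod{x_i-x_j}{g_i-g_j}$ and all $\inprod{x_i-y_\star}{g_i}$) exactly zero except one, \emph{and} preserve $g_N \parallel y_\star-y_0$ (without which Cauchy--Schwarz only gives an upper bound on $\norm{g_N}$, not the lower bound you need). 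Each perturbed slack is a product of the $e_{N+1}$-components of the relevant differences, so you are asking for almost all of these products to vanish simultaneously; there is no "heavy redundancy" that rescues this, and indeed the paper does not attempt it. Instead, the paper perturbs the \emph{Gram matrix} directly, $\vG=\vG_0+\epsilon\vdelta$ with $\vdelta$ ranging over all of $\mathbb{S}^{N+1}$ (dimension $\binom{N+2}{2}$), which decouples the constraints into linear trace conditions. The price is that positive semidefiniteness is no longer automatic (since $\det\vG_0=0$), and this is handled by an explicit first-order expansion of the determinant via the adjugate of $\vG_0$ (\cref{lemma:determinant}, relying on \cref{lemma:worst-case-gram-matrix-properties}).

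The second, deeper missing piece is why a valid perturbation direction exists precisely when some $\lambda^\star_{i_0,j_0}<0$. This is not a soft genericity statement: it is equivalent to the kernel of the linear map $(a,b,c,d,e)\mapsto\sum a_{i,j}\vA_{i,j}+\sum b_i\vB_i+c_N\vC_N+d_N\vD_N+e_N\vE_N$ being \emph{one-dimensional} and spanned by the vector built from $\lambda^\star(H)$ (\cref{lemma:linear-algebra-trick,lemma:lambda-are-unique-solutions}); the sign of the $(i_0,j_0)$ kernel coordinate is what determines whether the slack can be opened in the favorable direction. Your alternative SDP sketch does not bypass this: weak duality bounds the primal (worst-case) value from \emph{above} by the dual, so dual infeasibility yields nothing, and arguing that no nonnegative multipliers certify the rate is exactly the uniqueness of $\lambda^\star$ as the multiplier vector --- i.e., \cref{lemma:lambda-are-unique-solutions} again. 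So the proposal correctly locates the technical heart of the proof but leaves it unresolved, and the specific perturbation it proposes would fail.
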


\paragraph{Part 1: Reformulating the worst-case construction via Gram matrix.}
Suppose that we are given an H-matrix $H$ satisfying the H-invariance conditions but at least one of the H-certificates is negative.
Rather than specifying an explicit operator $\opT$, we will show that there exist the values of $y_0, y_\star \in \reals^{N+1}$ and $g_1, \dots, g_N \in \reals^{N+1}$ such that $\norm{y_0 - y_\star} = 1$, $\norm{g_N} > \frac{1}{N}$, and if $y_1, \dots, y_{N-1}$ are defined as
\begin{align}
\label{eqn:update-rule-PEP-form}
    y_i = y_{i-1} - \sum_{j=1}^i 2h_{i,j} g_j
\end{align}
for $i=1,\dots,N-1$ and $x_i = y_{i-1} - g_i$ for $i=1,\dots,N$, then 
\begin{align}
\label{eqn:monotonicity-i-j-PEP-form}
    \inprod{x_i - x_j}{g_i - g_j} \ge 0 \iff \sqnorm{y_{i-1} - y_{j-1}} - \sqnorm{(y_{i-1} - 2g_i) - (y_{j-1} - 2g_j)} \ge 0
\end{align}
and
\begin{align}
\label{eqn:monotonicity-i-star-PEP-form}
    \inprod{x_i - y_\star}{g_i} \ge 0 \iff \sqnorm{y_{i-1} - y_\star} - \sqnorm{(y_{i-1} - 2g_i) - y_\star} \ge 0
\end{align}
holds for all $i,j = 1,\dots,N$.
When this holds, one can reconstruct a nonexpansive (1-Lipschitz) operator $\opT \colon \reals^{N+1} \to \reals^{N+1}$ such that $\opT y_\star = y_\star$ and $\opT y_{i-1} = y_{i-1} - 2g_i$ for each $i=1,\dots,N$ by the classical Kirszbraun--Valentine theorem \citep{Kirszbraun1934_uber, Valentine1943_extension, Valentine1945_lipschitz}. 
Now for such an operator $\opT$, the algorithm defined by~\eqref{eqn:H-matrix-representation}, if started at $y_0$, will precisely generate the sequence $y_1,\dots,y_{N-1}$ as its outputs.
Therefore,
\begin{align*}
    \sqnorm{y_{N-1} - \opT y_{N-1}} = 4\sqnorm{g_N} > \frac{4}{N^2} = \frac{4\sqnorm{y_0 - y_\star}}{N^2}
\end{align*}
and the algorithm fails to have the exact optimal worst-case rate.

Instead of providing concrete values of $y_0, y_\star, g_1, \dots, g_N \in \reals^d$, it suffices to specify a positive semidefinite \emph{Gram matrix}
\begin{align*}
    \vG = \begin{bmatrix}
        \sqnorm{g_1} & \inprod{g_1}{g_2} & \cdots & \inprod{g_1}{g_N} & \inprod{g_1}{y_0 - y_\star} \\
        \inprod{g_2}{g_1} & \sqnorm{g_2} & \cdots & \inprod{g_2}{g_N} & \inprod{g_2}{y_0 - y_\star} \\
        \vdots & \vdots & \ddots & \vdots & \vdots \\
        \inprod{g_N}{g_1} & \inprod{g_N}{g_2} & \cdots & \sqnorm{g_N} & \inprod{g_N}{y_0 - y_\star} \\
        \inprod{y_0 - y_\star}{g_1} & \inprod{y_0 - y_\star}{g_2} & \cdots & \inprod{y_0 - y_\star}{g_N} & \sqnorm{y_0 - y_\star}
    \end{bmatrix} \in \mathbb{S}^{N+1}_+ 
\end{align*}
because $y_0 - y_\star, g_1, \dots, g_N \in \reals^{N+1}$ can be found using the Cholesky decomposition of $\vG$ (then $y_\star$ can be chosen arbitrarily, e.g., $y_\star = 0$).
The equivalent conditions for $y_0 - y_\star, g_1, \dots, g_N \in \reals^{N+1}$ constructed in this way satisfying~\eqref{eqn:monotonicity-i-j-PEP-form} and \eqref{eqn:monotonicity-i-star-PEP-form} in terms of $\vG$ respectively are: 
\begin{align}
\label{eqn:trace-nonnegative-conditions}
    \Tr (\vG\vA_{i,j}) \ge 0 \quad \text{and} \quad \Tr (\vG \vB_i) \ge 0
\end{align}
for $i=1,\dots,N$ and $j=1,\dots,i-1$, where
\begin{align*}
    \vA_{i,j} & = \frac{1}{2} \left( (\vx_i - \vx_j) (\vg_i - \vg_j)^\intercal + (\vg_i - \vg_j) (\vx_i - \vx_j)^\intercal \right) \in \mathbb{S}^{N+1} \\
    \vB_{i} & = \frac{1}{2} \left( (\vx_i - \vy_\star) (\vg_i - \vg_\star)^\intercal + (\vg_i - \vg_\star) (\vx_i - \vy_\star)^\intercal \right) \in \mathbb{S}^{N+1}
\end{align*}
with $\vy_\star = 0$, $\vg_\star = 0$, $\vy_0 = \ve_{N+1}$, $\vg_i = \ve_i$ for $i=1,\dots,N$, and
\begin{align*}
    \vy_{i+1} & = \vy_i - \sum_{j=0}^i 2h_{i+1,j+1} \vg_{j+1} \quad \text{for $i=0,\dots,N-2$} \\
    \vx_{i+1} & = \vy_i - \vg_{i+1} \quad \text{for $i=0,\dots,N-1$} .
\end{align*}
Here we note that the idea of reducing the construction of worst-case operators into a search of finite-dimensional PSD Gram matrix satisfying LMIs based on interpolation conditions, and our choice of notations for this reformulation, have been motivated by prior work \citep{DroriTeboulle2014_performance, TaylorHendrickxGlineur2017_smooth, RyuTaylorBergelingGiselsson2020_operator} that proposed computer-assisted performance estimation and proof search for first-order algorithms. 
However, we only utilize their conceptual framework for the ease of articulating the arguments, and do not perform any numerical search with this formulation.

\paragraph{Part 2: Perturbation analysis in the space of Gram matrices.}

We start with analyzing a specific Gram matrix $\vG_0$, obtained from the worst-case operator used in \cref{theorem:H-invariance-necessity}.

\begin{lemma}
\label{lemma:worst-case-gram-matrix-explicit-form}
Let $\opT = I - 2G \colon \reals^N \to \reals^N$ be the worst-case nonexpansive linear operator from the proof of \cref{theorem:H-invariance-necessity}, and let $y_0 = -\frac{1}{\sqrt{N}}G^{-1} e_1 = -\frac{1}{\sqrt{N}} (e_1 + \dots + e_N)$.
Then the Gram matrix $\vG_0$ obtained by running the algorithm with H-matrix representation~\eqref{eqn:H-matrix-representation} for the operator $\opT$ is given by
\begin{enumerate}[label=\normalfont(\alph*)]
    \item $\left( \vG_0 \right)_{i,j} = \frac{1}{N} \sum_{m=0}^{i-1} \sum_{n=0}^{j-1} (-1)^{m+n} \binom{m+n}{m} P(i-1,m) P(j-1,n)$ for $i,j = 1, \dots, N$.
    \item $\left( \vG_0 \right)_{i,N+1} = \left( \vG_0 \right)_{N+1,i} = \frac{1}{N}$ for $i = 1, \dots, N$.
    \item $\left( \vG_0 \right)_{N+1,N+1} = 1$.
\end{enumerate}
\end{lemma}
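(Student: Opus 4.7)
The plan is to reduce each entry of $\vG_0$ to a closed-form expression for $g_i$ in the standard basis $\{e_1, \dots, e_N\}$ of $\reals^N$, and then to read off the three parts of the lemma as short basis-level calculations. Note that $y_\star = 0$ by the proof of \cref{theorem:H-invariance-necessity}, so throughout $y_0 - y_\star = y_0 = -\frac{1}{\sqrt{N}}(e_1 + \cdots + e_N)$.

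First I would observe that for the linear operator $\opT = \opI - 2G$ one has $\opA = (\opI - G)^{-1} G$ and $\JA = \opI - G$, so $x_i = (\opI - G)y_{i-1}$ and hence $g_i = G y_{i-1}$. Combining this with the polynomial identity $y_k = \sum_{m=0}^{k}(-1)^m P(k,m;2H) G^m y_0$ from \cref{theorem:H-invariance-necessity} together with the explicit evaluation of $G^{m+1} y_0$ already carried out in that proof, and using $P(k,m;2H) = 2^m P(k,m;H)$ to cancel the $2^m$ factors, yields the closed form
\[
    g_i = -\frac{1}{\sqrt{N}} \sum_{m=0}^{i-1} (-1)^m P(i-1,m) \sum_{j=1}^{m+1} (-1)^{j-1} \binom{m}{j-1} e_j .
\]

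From here each part is short. Part (c) is immediate from $\sqnorm{y_0} = \frac{1}{N}\sqnorm{e_1 + \cdots + e_N} = 1$. For part (b), $\inner{g_i}{y_0 - y_\star}$ equals $-\frac{1}{\sqrt{N}}$ times the sum of the coordinates of $g_i$; but the inner signed binomial sum $\sum_{j=1}^{m+1}(-1)^{j-1}\binom{m}{j-1} = (1-1)^m$ vanishes for every $m \geq 1$, so only the $m=0$ term survives, giving $\frac{1}{N}$. For part (a), expanding $\inner{g_i}{g_j}$ and using orthonormality of the $e_k$ collapses the two inner signed sums to $\sum_{k=0}^{\min(m,n)}\binom{m}{k}\binom{n}{k}$, which equals $\binom{m+n}{m}$ by the standard Chu--Vandermonde identity (after using $\binom{n}{k} = \binom{n}{n-k}$), producing the claimed formula exactly.

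The argument is essentially bookkeeping; the only care needed is in aligning index ranges, tracking the $(-1)^{j-1}$ signs, and converting between $P(\cdot,\cdot;2H)$ and $P(\cdot,\cdot;H)$. I do not anticipate any genuine technical obstacle beyond ensuring these small manipulations are carried out cleanly.
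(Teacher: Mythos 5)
Your proposal is correct and follows essentially the same route as the paper: both derive the closed form $g_i = Gy_{i-1} = -\frac{1}{\sqrt{N}}\sum_{m=0}^{i-1}(-1)^m P(i-1,m)\sum_{j=1}^{m+1}(-1)^{j-1}\binom{m}{j-1}e_j$ from the computations already done in the proof of \cref{theorem:H-invariance-necessity}, then obtain (b) from the vanishing of $(1-1)^m$ for $m\ge 1$ and (a) from the Chu--Vandermonde identity $\sum_{\ell}\binom{m}{\ell}\binom{n}{\ell}=\binom{m+n}{m}$. No gaps.
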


When $H$ satisfies H-invariance, then the Gram matrix $\vG_0$ from \cref{lemma:worst-case-gram-matrix-explicit-form} satisfies additional desirable properties.

\begin{lemma}
\label{lemma:worst-case-gram-matrix-properties}
Let $\vG_0$ be as in \cref{lemma:worst-case-gram-matrix-explicit-form} and suppose $P(N-1,m) = \frac{1}{N} \binom{N}{m+1}$ for $m=1,\dots,N-1$.
Then
\begin{enumerate}[label=\normalfont(\alph*)]
    \item $\Tr(\vG_0 \vA_{i,j}) = 0$ and $\Tr(\vG_0 \vB_i) = 0$ for all $i=1,\dots,N$ and $j=1,\dots,i-1$.
    \item $\left( \vG_0 \right)_{i,N} = \left( \vG_0 \right)_{N,i} = \frac{1}{N^2}$ for $i=1,\dots,N$. 
    In particular, $\det \vG_0 = 0$.
    \item For $k=2,\dots,N$, $\det \left( \vG_0 \right)_{1:k, 1:k} = \frac{1}{N^k} P(1,1)^2 \cdots P(k-1,k-1)^2 = \frac{1}{N^k} h_{1,1}^{2k-2} h_{2,2}^{2k-4} \cdots h_{k-1,k-1}^2 > 0$.
    In particular, $(\vG_0)_{1:N,1:N} \succ 0$.
\end{enumerate}
\end{lemma}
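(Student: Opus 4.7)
The plan is to handle the three parts of the lemma sequentially, starting from the explicit formula of \cref{lemma:worst-case-gram-matrix-explicit-form} and reusing the combinatorial identities already invoked in \cref{theorem:H-invariance-necessity} and \cref{theorem:lambda-characterization}.

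For part (a), I would first make a structural observation about the worst-case operator used to construct $\vG_0$: the matrix $\opI - 2G$ is orthogonal. Direct inspection of $G$ gives $(\opI - 2G)e_j = e_{j+1}$ for $j = 1,\dots,N-1$ and $(\opI - 2G)e_N = -e_1$, so $\opT = \opI - 2G$ acts as a signed cyclic permutation on the standard basis, hence is an isometry. Consequently every nonexpansivity inequality is saturated: $\sqnorm{y_{i-1} - y_{j-1}} = \sqnorm{\opT y_{i-1} - \opT y_{j-1}}$, which by \eqref{eqn:monotonicity-i-j-PEP-form} translates to $\Tr(\vG_0\vA_{i,j}) = 0$. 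Similarly, since $y_\star = 0$ is the fixed point and $\opT$ preserves norms, $\sqnorm{y_{i-1} - y_\star} = \sqnorm{\opT y_{i-1} - y_\star}$, yielding $\Tr(\vG_0\vB_i) = 0$ via \eqref{eqn:monotonicity-i-star-PEP-form}. No H-invariance is needed for this part.

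For part (b), my plan is to substitute $P(N-1,n) = \frac{1}{N}\binom{N}{n+1}$ into \cref{lemma:worst-case-gram-matrix-explicit-form}(a) evaluated at $j = N$ and pull out the factors independent of $n$, obtaining
\begin{align*}
(\vG_0)_{i,N} = \frac{1}{N^2}\sum_{m=0}^{i-1}(-1)^m P(i-1,m)\, S_m, \qquad S_m := \sum_{n=0}^{N-1}(-1)^n \binom{m+n}{m}\binom{N}{n+1}.
\end{align*}
The Chu--Vandermonde identity (\cref{lemma:chu-vandermonde}) with $a = -m-1$, $b = N$, $c = N-1$---precisely step $(**)$ used in the proof of \cref{theorem:lambda-characterization}---gives $S_m = 0$ for every $m \ge 1$, while for $m = 0$ the binomial theorem yields $S_0 = \sum_{k=1}^{N}(-1)^{k-1}\binom{N}{k} = 1$. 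Only the $m = 0$ term survives, so $(\vG_0)_{i,N} = \frac{1}{N^2}$. The claim $\det \vG_0 = 0$ then follows at once: combining this with \cref{lemma:worst-case-gram-matrix-explicit-form}(b,c), the $(N{+}1)$-th column of $\vG_0$ equals $N$ times the $N$-th column.

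For part (c), the plan is to factor the top-left $N\times N$ block via Vandermonde's identity $\binom{m+n}{m} = \sum_{p=0}^{\min(m,n)}\binom{m}{p}\binom{n}{p}$ applied inside \cref{lemma:worst-case-gram-matrix-explicit-form}(a). This rewrites
\begin{align*}
(\vG_0)_{i,j} = \frac{1}{N}\sum_{p=0}^{N-1} W_{i,p+1}W_{j,p+1}, \qquad W_{i,q} := \sum_{m=q-1}^{i-1}(-1)^m \binom{m}{q-1} P(i-1,m),
\end{align*}
so that $(\vG_0)_{1:k,1:k} = \frac{1}{N}W_{1:k,1:k}W_{1:k,1:k}^\top$. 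Because $W_{i,q} = 0$ whenever $q > i$ and $W_{i,i} = (-1)^{i-1}P(i-1,i-1)$, the matrix $W_{1:k,1:k}$ is lower triangular, hence
\begin{align*}
\det (\vG_0)_{1:k,1:k} = \frac{1}{N^k}\bigl(\det W_{1:k,1:k}\bigr)^2 = \frac{1}{N^k}\prod_{i=1}^{k} P(i-1,i-1)^2,
\end{align*}
and unpacking $P(i,i) = h_{1,1}\cdots h_{i,i}$ with the usual multiplicity count reproduces the claimed product $\frac{1}{N^k}h_{1,1}^{2k-2}h_{2,2}^{2k-4}\cdots h_{k-1,k-1}^2$. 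Strict positivity follows because H-invariance forces $\prod_{i=1}^{N-1} h_{i,i} = P(N-1,N-1) = \frac{1}{N} \ne 0$, so no $h_{i,i}$ vanishes, and then Sylvester's criterion gives $(\vG_0)_{1:N,1:N} \succ 0$. The hard part is spotting this factorization---once one sees that Vandermonde's identity on the scalar $\binom{m+n}{m}$ literally performs the Cholesky-style decomposition of the kernel in \cref{lemma:worst-case-gram-matrix-explicit-form}(a), everything else is routine bookkeeping, and parts (a) and (b) are shorter identifications reusing tools already developed in the paper.
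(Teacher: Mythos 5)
Your proposal is correct, and each part checks out against the paper's own argument, though the routes differ in places. For (a), your observation that $\opT=\opI-2G$ is a signed cyclic permutation ($\opT e_j=e_{j+1}$, $\opT e_N=-e_1$), hence orthogonal, is a cleaner packaging of the same fact the paper verifies by computing $G^\T G=\tfrac12(G+G^\T)$; both yield saturation of every nonexpansivity inequality, and neither needs H-invariance. For (b) you take a genuinely different path: the paper reuses the geometric identity $g_N=Gy_{N-1}=\tfrac1N(y_0-y_\star)$ already established in the necessity proof (equation \eqref{eqn:necessary-condition-projection-identity}) and reads off $(\vG_0)_{i,N}=\tfrac1N(\vG_0)_{i,N+1}=\tfrac1{N^2}$ in one line, whereas you substitute the H-invariance values into \cref{lemma:worst-case-gram-matrix-explicit-form}(a) and kill the inner sum with the same Chu--Vandermonde instance as step $(**)$ of \cref{theorem:lambda-characterization}; your version is more computational but self-contained at the level of the Gram entries, and your column-proportionality argument for $\det\vG_0=0$ is the transpose of the paper's row argument. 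For (c) the paper computes $\det(\vG_0)_{1:k,1:k}=\sqnorm{g_1\wedge\cdots\wedge g_k}$ by induction on the wedge product, while you re-expand $\binom{m+n}{m}$ via Vandermonde to exhibit the explicit lower-triangular factor $W$ with $\vG_0=\tfrac1N WW^\top$; this is the same triangular structure (your $W_{i,q}$ is, up to sign and the $1/\sqrt N$ scaling, the $e_q$-coordinate of $g_i$ from \eqref{eqn:gram-matrix-gyk-formula}) expressed as a Cholesky-type factorization rather than in exterior-algebra language. One cosmetic point: for Sylvester's criterion you should also note the $k=1$ minor $(\vG_0)_{1,1}=\sqnorm{g_1}=\tfrac1N>0$, which the lemma statement starts at $k=2$; this is trivial but worth a clause.
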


Note that in \cref{lemma:worst-case-gram-matrix-properties}(c), we implicitly use the fact that $h_{1,1} h_{2,2} \dots h_{N-1,N-1} = P(N-1,N-1) = \frac{1}{N} \binom{N}{N} = \frac{1}{N}$ so all diagonal entries $h_{j,j}$'s are nonzero.

The main idea of the proof is to show that there exists a small perturbation $\vG = \vG_0 + \epsilon \vdelta$ of $\vG_0$, where $\epsilon > 0$ and $\vdelta \in \mathbb{S}^{N+1}$ is a symmetric matrix specifying the direction of perturbation, such that  $\vG \succ 0$, $(\vG)_{N,N} > \frac{1}{N^2}$, $(\vG)_{N+1,N+1} = 1$, $\Tr(\vG \vB_i) = 0$ for $i=1,\dots,N$, $\Tr(\vG \vA_{i,j}) = 0$ for all $1 \le j < i \le N$ with $(i, j) \ne (i_0, j_0)$ and $\Tr(\vG \vA_{i_0,j_0}) > 0$.
Once this is done, the Cholesky factorization of $\vG$ will yield the values of $y_0 - y_\star$ and $g_1, \dots, g_N$ such that $\sqnorm{y_0 - y_\star} = 1$, $\sqnorm{g_N} > \frac{1}{N^2}$, and the iterates $y_i$ defined as in \eqref{eqn:update-rule-PEP-form} can be interpolated with a nonexpansive $\opT\colon \reals^{N+1} \to \reals^{N+1}$ with operator values $\opT y_i = y_i - 2g_{i+1}$ for $i=0,\dots,N-1$, which completes the proof.

Here all conditions needed for $\vG$ besides $\vG \succ 0$ are linear in $\vG$, while $\vG \succ 0$ is a nonlinear condition which is difficult to handle because $\det \vG_0 = 0$.
Therefore, we locally linearize it by taking the derivative of the determinant function at $\vG_0$.
Owing to the desirable properties of $\vG_0$ from \cref{lemma:worst-case-gram-matrix-properties}, the result is surprisingly simple.

\begin{lemma}
\label{lemma:determinant}
Under the assumptions and notations of \cref{lemma:worst-case-gram-matrix-properties}, if $\vdelta \in \mathbb{S}^{N+1}$ and $(\vdelta)_{N+1,N+1} = 0$, then
\begin{align*}
    \det (\vG_0 + t\vdelta) & = t \cdot \frac{P(1,1)^2 \cdots P(N-1,N-1)^2}{N^{N-2}} \left( (\vdelta)_{N,N} - \frac{2}{N} (\vdelta)_{N,N+1} \right) + O(t^2) \\
    & = t \cdot \frac{h_{1,1}^{2N-2} \cdots h_{N-1,N-1}^2}{N^{N-2}} \left( (\vdelta)_{N,N} - \frac{2}{N} (\vdelta)_{N,N+1} \right) + O(t^2)
\end{align*}
\end{lemma}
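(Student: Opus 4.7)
The plan is to reduce the problem to the Jacobi differentiation formula
\[
    \det(\vG_0 + t\vdelta) = \det(\vG_0) + t\cdot \Tr\!\bigl(\adj(\vG_0)\,\vdelta\bigr) + O(t^2),
\]
and then to exploit the very specific rank structure of $\vG_0$ exposed in \cref{lemma:worst-case-gram-matrix-properties}. Since $\det \vG_0 = 0$ by part~(b), the constant term vanishes, so everything hinges on computing $\Tr(\adj(\vG_0)\vdelta)$.

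The key observation is that the last two columns of $\vG_0$ are almost proportional: comparing part~(b) and the definition of $\vG_0$, the $N$th column equals $\tfrac{1}{N^2}(1,\dots,1,\tfrac{1}{N}\cdot N)^\intercal$ while the $(N{+}1)$th column equals $\tfrac{1}{N}(1,\dots,1,N)^\intercal$, so that the $(N{+}1)$th column is exactly $N$ times the $N$th column. Thus the vector $v = (0,0,\dots,0,N,-1)^\intercal \in \reals^{N+1}$ lies in $\ker \vG_0$. Combined with $(\vG_0)_{1:N,1:N}\succ 0$ from part~(c), this forces $\vG_0$ to have rank exactly $N$, so $\ker \vG_0 = \spann\{v\}$. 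Because $\vG_0$ is symmetric and of corank one, the identity $\vG_0\,\adj(\vG_0) = 0$ together with the symmetry of $\adj(\vG_0)$ implies
\[
    \adj(\vG_0) = \alpha\, vv^\intercal
\]
for some scalar $\alpha$, which I would pin down by reading off a single entry.

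The natural entry to read off is the $(N{+}1,N{+}1)$ one: the cofactor formula gives
\[
    \bigl(\adj(\vG_0)\bigr)_{N+1,N+1} = \det\bigl((\vG_0)_{1:N,1:N}\bigr) = \frac{P(1,1)^2\cdots P(N-1,N-1)^2}{N^N}
\]
by \cref{lemma:worst-case-gram-matrix-properties}(c) with $k=N$, while $(vv^\intercal)_{N+1,N+1} = 1$. Hence $\alpha$ equals the above ratio. Finally, using $(\vdelta)_{N+1,N+1}=0$, a direct expansion yields
\[
    v^\intercal \vdelta v = N^2 (\vdelta)_{N,N} - 2N(\vdelta)_{N,N+1} + (\vdelta)_{N+1,N+1} = N^2\!\left((\vdelta)_{N,N} - \tfrac{2}{N}(\vdelta)_{N,N+1}\right),
\]
so that $\Tr(\adj(\vG_0)\vdelta) = \alpha\, v^\intercal \vdelta v$ produces exactly the claimed leading coefficient $\frac{P(1,1)^2\cdots P(N-1,N-1)^2}{N^{N-2}}\bigl((\vdelta)_{N,N}-\tfrac{2}{N}(\vdelta)_{N,N+1}\bigr)$. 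The second equality in the lemma statement then follows from $P(j,j) = h_{1,1}\cdots h_{j,j}$ so that $\prod_{j=1}^{N-1}P(j,j)^2 = h_{1,1}^{2N-2}h_{2,2}^{2N-4}\cdots h_{N-1,N-1}^{2}$.

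The only place where real work is hiding is the rank-one conclusion $\adj(\vG_0)=\alpha vv^\intercal$; this is standard for symmetric matrices of corank one, but I would state it carefully as it is what lets a single entry of the adjugate determine the entire answer. Everything else is bookkeeping: identifying the null vector $v$ from the proportionality of columns $N$ and $N{+}1$, and plugging into $v^\intercal \vdelta v$.
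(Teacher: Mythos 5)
Your proof is correct, and while it shares the paper's overall skeleton (Jacobi's formula plus $\det\vG_0=0$, so everything reduces to $\Tr(\adj(\vG_0)\vdelta)$), you compute the adjugate by a genuinely different and cleaner route. The paper evaluates the cofactors directly: it kills all entries of $\adj\vG_0$ outside the bottom-right $2\times 2$ block by observing that the relevant minors retain two proportional rows, and then computes $(\adj\vG_0)_{N,N}$ and $(\adj\vG_0)_{N+1,N}$ by explicit row/column scaling of determinants. You instead note that $(\vG_0)_{1:N,1:N}\succ 0$ together with $\det\vG_0=0$ forces corank one, identify the kernel vector $v=Ne_N-e_{N+1}$ from the proportionality of the last two columns (equivalently, \cref{lemma:worst-case-gram-matrix-properties}(b)), invoke $\vG_0\,\adj(\vG_0)=0$ and symmetry to get $\adj(\vG_0)=\alpha vv^\intercal$, and pin down $\alpha$ from the single entry $(\adj\vG_0)_{N+1,N+1}=\det(\vG_0)_{1:N,1:N}$, which is already supplied by \cref{lemma:worst-case-gram-matrix-properties}(c). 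This buys you a structural shortcut: one known minor determines the entire adjugate, and $v^\intercal\vdelta v$ immediately produces the combination $(\vdelta)_{N,N}-\tfrac{2}{N}(\vdelta)_{N,N+1}$, whereas the paper's computation must handle each nonzero adjugate entry separately. The rank-one adjugate fact you rely on is standard and you correctly flag it as the one step to state carefully. One cosmetic slip: your factored form of the $N$th column should read $\tfrac{1}{N^2}(1,\dots,1,N)^\intercal$ (last entry $\tfrac{1}{N}$), not $\tfrac{1}{N^2}(1,\dots,1,\tfrac{1}{N}\cdot N)^\intercal$; the conclusion that column $N{+}1$ is $N$ times column $N$ is nonetheless right.
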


\Cref{lemma:determinant} shows that as long as we have $(\vdelta)_{N,N} - \frac{2}{N} (\vdelta)_{N,N+1} > 0$, by taking $\epsilon > 0$ small enough, we will have $\det(\vG_0 + \epsilon\vdelta) > 0$.
Furthermore, because $\det (\vG_0)_{1:k,1:k} > 0$ for all $k=1,\dots,N$, with small enough $\epsilon$, one will have $\det(\vG_0 + \epsilon\vdelta)_{1:k,1:k} > 0$ for all $k=1,\dots,N+1$, i.e., $\vG = \vG_0 + \epsilon\vdelta \succ 0$.
Now define the symmetric $(N+1) \times (N+1)$ matrices $\vC_N, \vD_N, \vE_N$ by
\begin{align*}
    \vC_N = \begin{bmatrix}
        \begin{array}{c|c}
             0_{N \times N} & 0_{N \times 1} \\
            \hline 
            0_{1\times N} & 1
        \end{array}
    \end{bmatrix}, 
    \quad
    \vD_N = \begin{bmatrix}
        \begin{array}{c|c}
             0_{(N-1) \times (N-1)} & 0_{(N-1) \times 2} \\
            \hline 
            0_{2\times (N-1)} &
            \begin{array}{cc}
                1 & -\frac{1}{N} \\
                -\frac{1}{N} & 0
            \end{array}
        \end{array}
    \end{bmatrix} ,
    \quad
    \vE_N = \begin{bmatrix}
        \begin{array}{c|c}
             0_{(N-1) \times (N-1)} & 0_{(N-1) \times 2} \\
            \hline 
            0_{2\times (N-1)} &
            \begin{array}{cc}
                1 & 0 \\
                0 & 0
            \end{array}
        \end{array}
    \end{bmatrix} , 
\end{align*}
Then, the proof is complete if there exists $\vdelta \in \mathbb{S}^{N+1}$ satisfying
\begin{enumerate}
    \item $\Tr(\vdelta \vA_{i,j}) = 0$ for $1\le j < i \le N$ such that $(i,j) \ne (i_0, j_0)$
    \item $\Tr(\vdelta \vA_{i_0,j_0}) > 0$
    \item $\Tr(\vdelta \vB_i) = 0$ for $i=1,\dots,N$
    \item $\Tr(\vdelta \vC_N) = 0$
    \item $\Tr(\vdelta \vD_N), \Tr(\vdelta \vE_N) > 0$.
\end{enumerate}
It remains to show how to construct such $\vdelta$.

\paragraph{Part 3: Linear algebraic tricks for constructing $\vdelta$.}

We endow the vector space $V = \mathbb{S}^{N+1}$ of $(N+1) \times (N+1)$ symmetric matrices with the inner product $\inprod{\vX}{\vY}_V = \Tr(\vX\vY)$.
We will make use of the following general result:
\begin{lemma}
\label{lemma:linear-algebra-trick}
Let $(W, \inprod{\cdot}{\cdot}_W)$ be a real inner product space, and let $u_1, \dots, u_p \in W$ and $v_1, v_2 \in W$.
Consider the linear map $L\colon \reals^{p+2} \to W$ defined by
\begin{align*}
    L(\alpha_1, \dots, \alpha_p, \beta_1, \beta_2) = \alpha_1 u_1 + \dots + \alpha_p u_p + \beta_1 v_1 + \beta_2 v_2 .
\end{align*}
Suppose that $\ker L$ is one-dimensional and is spanned by some $(a_1, \dots, a_p, b_1, b_2) \in \reals^{p+2}$ such that $b_1, b_2 > 0$.
Then there exists $w \in W$ satisfying both
\begin{enumerate}[label=\normalfont(\alph*)]
    \item $\inprod{w}{v_1} > 0$, $\inprod{w}{v_2} > 0$
    \item $\inprod{w}{u_i} \ge 0$ for all $i=1,\dots,p$
\end{enumerate}
if and only if $a_j < 0$ for some $j=1,\dots,p$.
Furthermore, when this is the case, one can choose $w$ to satisfy $\inprod{w}{u_i} = 0$ for $i = 1, \dots, j-1, j+1, \dots, p$ and $\inprod{w}{u_j} > 0$.
\end{lemma}

Finally, the proof is completed by \cref{lemma:lambda-are-unique-solutions}, which shows that the assumptions of \cref{lemma:linear-algebra-trick} are satisfied with $W = V$,
\begin{align*}
    \{u_1,\dots,u_p\} & = \left\{ \vA_{i,j} : 1\le j < i \le N \right\} \cup \left\{ \vB_i : i=1,\dots,N \right\} \cup \left\{ \vC_N \right\}
\end{align*}
and $v_1 = \vD_N$, $v_2 = \vE_N$.

\begin{lemma}
\label{lemma:lambda-are-unique-solutions}
Suppose $P(N-1,m; H) = \frac{1}{N} \binom{N}{m+1}$ for $m=1,\dots,N-1$.
The identity
\begin{align*}
    \sum_{i=2}^N \sum_{j=1}^{i-1} a_{i,j} \vA_{i,j} + \sum_{i=1}^N b_i \vB_i + c_N \vC_N + d_N \vD_N + e_N \vE_N = 0
\end{align*}
is satisfied if and only if
\begin{itemize}
    \item $d_N = e_N$
    \item $c_N = 0$
    \item $b_i = 0$ for $i=1,\dots,N-1$
    \item $b_N = \frac{2d_N}{N}$
    \item $a_{i,j} = b_N \lambda_{i,j}^\star(H)$ for $1\le j < i \le N$.
\end{itemize}
\end{lemma}

\Cref{lemma:linear-algebra-trick}, together with \cref{lemma:lambda-are-unique-solutions}, shows a clear distinction between the case where $\lambda_{i,j}^\star(H) \ge 0$ for all $i > j$ (when the algorithm is exact optimal) and the case where $\lambda_{i_0,j_0}^\star(H) < 0$ for some $i_0 > j_0$; the strategy of perturbing $\vG_0$ to violate the optimal rate only works in the latter case.

\vspace{.2cm}
\noindent
\textbf{Remark.} Once we show that $d_N = e_N$, $c_N = 0$, $b_N = \frac{2d_N}{N}$ and $b_i = 0$ for $i=1,\dots,N-1$, taking the inner product of the matrix summation in \cref{lemma:lambda-are-unique-solutions} with a generic Gram matrix reveals that the identity is equivalent to
\begin{align}
\label{eqn:matrix-identity-in-plain-equation}
    Nb_N \sqnorm{g_N} + b_N \inprod{g_N}{x_N - y_0} + \sum_{k=2}^N \sum_{j=1}^{k-1} a_{k,j} \inprod{x_k - x_j}{g_k - g_j} = 0
\end{align}
in the sense that the coefficient of every $\inprod{g_k}{g_j}$ term vanishes.
Therefore, \cref{lemma:lambda-are-unique-solutions} implies that $\lambda_{k,j}^\star(H)$ are the unique values satisfying \eqref{eqn:H-symmetry-sufficiency-core-identity}, so by proving this result we also complete the remaining uniqueness argument from \cref{theorem:lambda-characterization}.

\section{New optimal algorithms with sparse H-certificates}
\label{section:new-boundary-algorithms}

In the case $N=4$, a generic H-matrix $H \in \reals^{3\times 3}$ is parametrized by its six lower-triangular entries, and the optimal ones must satisfy the three H-invariance identities.
Using those identities to eliminate the last-row entries, one can parametrize the optimal H-matrix set $\cH_\star \subset \reals^{3\times 3}$ using $(h_{1,1}, h_{2,1}, h_{2,2}) \in \reals^3$.
The conditions $\lambda^\star_{k,j}(H) \ge 0$ characterize the boundary surfaces of $\cH_\star$ under this parametrization, which enclose a 3-dimensional solid altogether (Figure~\ref{fig:3D-optimal-family}).
We observe that this 3D solid has six \emph{vertices}, which are intersections of maximal number of boundary surfaces of the form $\lambda_{k,j}^\star(H) = 0$ (in this case, three of them).
In particular, the vertex $A$ corresponds to OHM, satisfying $\lambda^\star_{3,1} = \lambda^\star_{4,1} = \lambda^\star_{4,2} = 0$, while the vertex $B$ represents Dual-OHM, satisfying $\lambda^\star_{2,1} = \lambda^\star_{3,1} = \lambda^\star_{3,2} = 0$.
Such specification of $\lambda_{k,j}^\star(H)$'s that are zeros is the \emph{sparsity pattern} of H-certificates associated with that vertex algorithm.

\begin{figure}[t]
    \centering
    \includegraphics[width=0.4\linewidth]{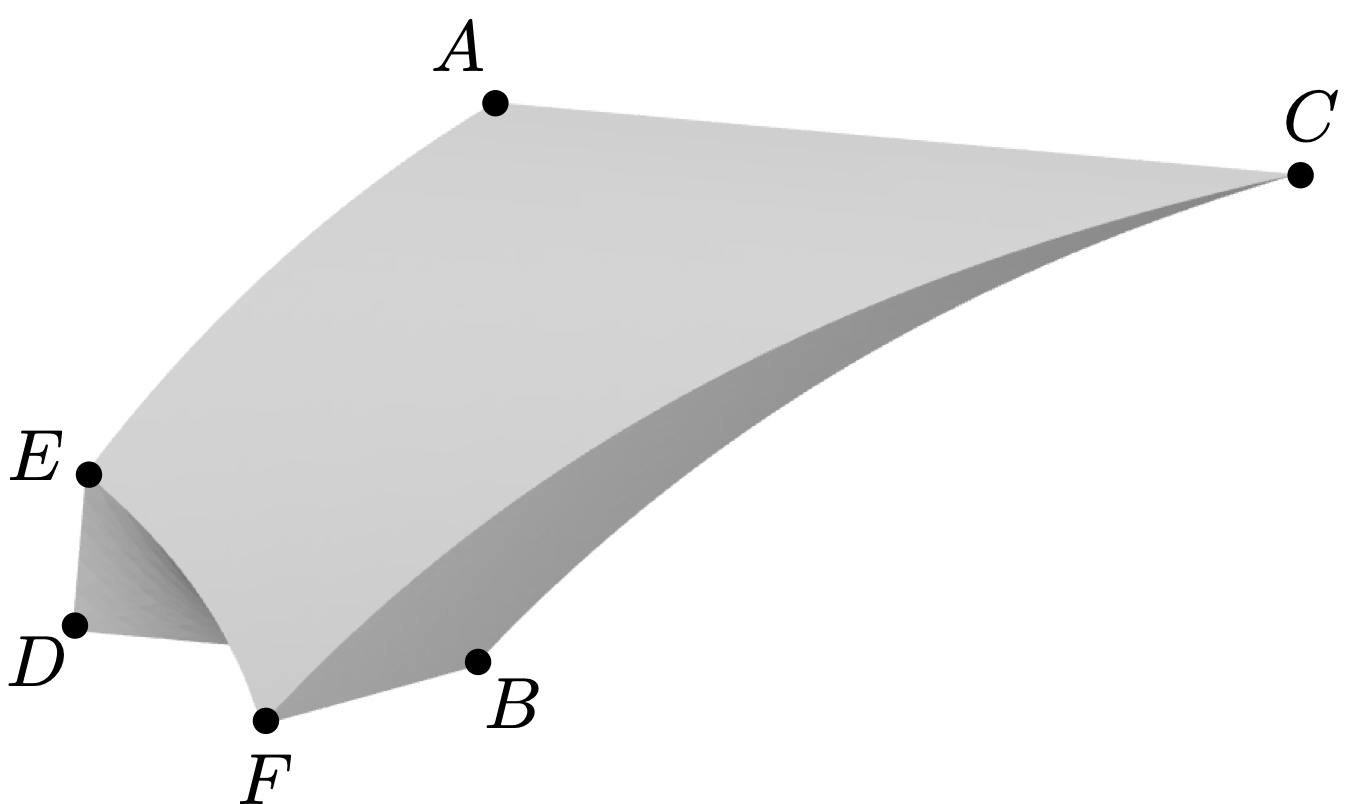}
    \caption{The optimal H-matrix set $\cH_\star$ for $N=4$, parametrized by $(h_{1,1}, h_{2,1}, h_{2,2})$, has six vertices with sparse $\lambda^\star$.}
    \label{fig:3D-optimal-family}
\end{figure}

In this section, we discover several new classes of optimal algorithms from the sparsity patterns of $\lambda^\star(H)$.
These are, by construction, algorithms whose convergence proofs use a minimal number of inequalities. 
To illustrate how a sparse $\lambda^\star(H)$, together with H-invariance, characterizes an H-matrix, we first revisit the cases of OHM and Dual-OHM.
Leveraging the insight from these cases, we then analyze some new sparsity patterns, which lead to novel algorithms.
This naturally lets us understand what the remaining vertices in Figure~\ref{fig:3D-optimal-family} other than $A$ (OHM) and $B$ (Dual-OHM) are.

\vspace{.2cm}
\noindent
Throughout this section, whenever clear from the context, we drop the dependence on $H$ from all notations.
Furthermore, we harmlessly (and implicitly) assume the condition $h_{1,1} h_{2,2} \cdots h_{N-1,N-1} \ne 0$ for all results and derivations below, as this is a necessary condition for optimality.
For cleaner exposition, we defer most proofs involving tedious computations using combinatorial identities to Appendix~\ref{section:new-algorithm-proofs}.

\subsection{Re-deriving OHM and Dual-OHM from sparsity patterns of $\lambda^\star$}

\subsubsection{Case of OHM}
\label{subsubsection:re-deriving-OHM}
The well-known convergence proof of OHM uses~\eqref{eqn:OHM-Lyapunov}---equivalently, the proof can be written in the form~\eqref{eqn:H-symmetry-sufficiency-core-identity}, where only $\lambda_{j+1,j}^\star > 0$ and all the other H-certificates are $0$.
In this section, we show that OHM can in fact be derived solely from this sparsity pattern of H-certificates, i.e., from the system of equations $\lambda_{k,j}^\star = 0$ for $j=1,\dots,N-2$ and $k=j+2,\dots,N$.

\begin{proposition}
\label{proposition:OHM-linear-equation}
For each fixed $j=1,\dots,N-2$, viewing the Q-function values $Q(N-1,1,j),\dots,Q(N-1,N-j,j)$ as variables, the quadratic system of $N-j-1$ equations
\begin{align}
\label{eqn:sparsity-pattern-OHM}
\tag{top-sparsity}
    \lambda^\star_{j+2,j} = \cdots = \lambda^\star_{N-1,j} = \lambda^\star_{N,j} = 0
\end{align}
is equivalent to a full-rank linear system~\eqref{eqn:OHM-Q-linear-system} of $N-j-1$ equations. Its (one-dimensional) solution is characterized by 
\begin{align}
\label{eqn:Q-relationship-formula-OHM}
    Q(N-1,k,j) = \binom{N-j-1}{k-1} Q(N-1,N-j,j) 
\end{align}
for $k=1,\dots,N-j-1$.
\end{proposition}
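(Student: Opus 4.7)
The plan is to exploit the explicit formulas from \cref{theorem:lambda-characterization} to convert the quadratic system into a linear system, then use a polynomial-degree/Vandermonde argument to determine both its rank and its solution.

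First, I would observe that for any $k > j$, the value $Q(N-1, m, k)$ depends only on entries $h_{i, j'}$ with $j' \ge k > j$, and in particular is independent of the $j$-th column $\{h_{i,j}\}_{i \ge j}$ on which $Q(N-1, \cdot, j)$ depends. Writing $q_\ell := Q(N-1, \ell, j)$ and $\tilde q_\ell := (-1)^{\ell-1} q_\ell$, each equation $\lambda^\star_{k,j} = 0$ from \cref{theorem:lambda-characterization} then takes the linear form
\[
    \sum_{\ell=1}^{N-j} p_k(\ell)\, \tilde q_\ell = 0, \qquad k = j+2, \ldots, N,
\]
where $p_N(\ell) \equiv 1$ and $p_k(\ell) = \sum_{m=1}^{N-k} (-1)^m \binom{\ell+m}{m} Q(N-1, m, k)$ for $k < N$. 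This is the desired linear reformulation.

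Next, I would show that each $p_k$ is a polynomial in $\ell$ of degree exactly $N-k$: its leading coefficient is $(-1)^{N-k} Q(N-1, N-k, k)/(N-k)!$, and H-invariance (implicit because $\lambda^\star$ is only defined under it) yields $Q(N-1, N-k, k) = \prod_{i=k}^{N-1} h_{i,i} = k/N \ne 0$. Hence $\{p_k\}_{k=j+2}^{N}$ contains exactly one polynomial of each degree $0, 1, \ldots, N-j-2$ and forms a basis of polynomials of degree at most $N-j-2$. The system is therefore equivalent to the moment conditions $\sum_{\ell=1}^{N-j} \ell^r \tilde q_\ell = 0$ for $r = 0, \ldots, N-j-2$, whose $(N-j-1) \times (N-j)$ coefficient matrix is Vandermonde-like on the distinct nodes $1, \ldots, N-j$; every $(N-j-1) \times (N-j-1)$ minor is a nonsingular Vandermonde determinant, so the matrix has full row rank $N-j-1$ and the solution space is one-dimensional.

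Finally, to identify the explicit kernel, I would verify that $\tilde q_\ell = (-1)^\ell \binom{N-j-1}{\ell-1}$ satisfies the moment conditions: reindexing $\ell \mapsto \ell + 1$ reduces $\sum_{\ell=1}^{N-j} \ell^r \tilde q_\ell$ to $-\sum_{\ell=0}^{N-j-1} (-1)^\ell \binom{N-j-1}{\ell}(\ell+1)^r$, which vanishes for $r \le N-j-2$ by the standard identity $\sum_{\ell=0}^n (-1)^\ell \binom{n}{\ell} P(\ell) = 0$ whenever $\deg P < n$. Translating back via $q_\ell = (-1)^{\ell-1} \tilde q_\ell$ and normalizing so that $q_{N-j}$ equals the prescribed $Q(N-1, N-j, j)$ then yields the claimed formula. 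I do not expect a serious obstacle: the disjoint-support observation justifying the linearization and the degree computation for $p_k$ (the single place H-invariance enters) are the only steps requiring care, after which the rank and explicit-solution identification follow from standard polynomial/Vandermonde facts.
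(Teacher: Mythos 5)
Your proof is correct, but it reaches the conclusion by a genuinely different route from the paper's. The paper first runs a downward induction on $k$: using the already-established equations for larger $k$, it kills every term of $\lambda^\star_{k,j}$ except the top one $(-1)^{N-k}Q(N-1,N-k,k)\sum_\ell(-1)^{\ell-1}\binom{\ell+N-k}{N-k}Q(N-1,\ell,j)$, whose nonzero prefactor $Q(N-1,N-k,k)=h_{k,k}\cdots h_{N-1,N-1}$ can be divided out; this produces the explicit binomial system \eqref{eqn:OHM-Q-linear-system}, which is then triangularized by hand via Pascal's rule and whose kernel is identified by a Chu--Vandermonde computation (\cref{lemma:chu-vandermonde}). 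You instead keep all terms, observe that the coefficient of $\tilde q_\ell$ in the $k$-th equation is a polynomial $p_k(\ell)$ of exact degree $N-k$ (leading coefficient proportional to $Q(N-1,N-k,k)\neq 0$), conclude that $\{p_k\}$ spans all polynomials of degree at most $N-j-2$, and hence reduce to moment conditions whose rank follows from Vandermonde nonsingularity and whose kernel follows from the vanishing of $(N-j-1)$-th finite differences of low-degree polynomials. Your version buys a cleaner conceptual explanation of \emph{why} the system has full rank and why binomial coefficients appear in the kernel, at the cost of not literally exhibiting the matrix \eqref{eqn:OHM-Q-linear-system} named in the statement (though your moment system is row-equivalent to it, since $\binom{\ell+m}{m}$, $m=0,\dots,N-j-2$, is likewise a degree-graded basis, so the equivalence claim is still established). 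One small slip: you assert $Q(N-1,N-k,k)=\prod_{i=k}^{N-1}h_{i,i}=k/N$; the first equality is definitional but the value $k/N$ holds only for OHM, not for a general H-invariant matrix (e.g.\ for Dual-OHM it equals $\frac{1}{N-k+1}$). What H-invariance actually gives is $\prod_{i=1}^{N-1}h_{i,i}=\frac{1}{N}$, hence every $h_{i,i}\neq 0$ and the partial product is nonzero --- which is all your degree argument needs, so the error is inconsequential.
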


Combining \cref{proposition:OHM-linear-equation} with the H-invariance conditions
\begin{align}
\label{eqn:H-invariance-Q-sum}
    Q(N-1,m,1) + \dots + Q(N-1,m,N-m) = P(N-1,m) = \frac{1}{N} \binom{N}{m+1}
\end{align}
for $m=1,\dots,N-1$, we can uniquely determine the values of $Q(N-1,\cdot,\cdot)$ via the following procedure:
\begin{enumerate}
    \item For each $1\le j \le N-1$, assuming that the value of $Q(N-1,N-j,j)$ is known, we can also determine the values of $Q(N-1,k,j)$ for $k=1,\dots,N-j-1$ using \eqref{eqn:Q-relationship-formula-OHM}.
    Note that for the base case $j=1$, we indeed know that $Q(N-1,N-1,1) = h_{1,1}h_{2,2}\cdots h_{N-1,N-1} = P(N-1,N-1) = \frac{1}{N}$, so we can determine $Q(N-1,k,1)$ for all $k=1,\dots,N-1$.

    \item Assuming that all values of $Q(N-1,k,i)$ are found for $i=1,\dots,j$ and $k=1,\dots,N-i$, we can determine the value of $Q(N-1,N-j-1,j+1)$ using the identity \eqref{eqn:H-invariance-Q-sum} with $m=N-j-1$:
    \begin{align*}
        Q(N-1,N-j-1,j+1) = \frac{1}{N}\binom{N}{N-j} - \sum_{i=1}^{j} Q(N-1,N-j-1,i) .
    \end{align*}
    Return to Step 1, and repeat until all Q-function values are found.
\end{enumerate}

\begin{proposition}
\label{proposition:OHM-Q-function-values}
The conditions \eqref{eqn:Q-relationship-formula-OHM} from \cref{proposition:OHM-linear-equation}, together with H-invariance conditions~\eqref{eqn:H-invariance-Q-sum}, uniquely determine $Q(N-1,k,j) = \frac{j}{N}\binom{N-j-1}{k-1}$ for $j=1,\dots,N-1$ and $k=1,\dots,N-j$.
\end{proposition}

\begin{proof}
As the claimed formula trivially satisfies \eqref{eqn:Q-relationship-formula-OHM} for each $j=1,\dots,N-2$, it suffices to verify \eqref{eqn:H-invariance-Q-sum} (then these values must be the unique Q-function values that the above procedure produces).
This is an immediate application of \cref{lemma:simple-combinatorial-summations}(b) with $p=N-1, q=k-1$.
\end{proof}

Having all values of $Q(N-1,\dots,\dots)$ determined, we can uniquely recover $H$ through the following procedure: for each $k=1,\dots,N-1$, we determine the values of $h_{k,k}, h_{k+1,k}, \dots, h_{N-1,k}$ in sequential order as follows.
\begin{enumerate}
    \item First, if $k=N-1$, then $h_{N-1,N-1} = Q(N-1,1,N-1)$; otherwise $k<N-1$, so
    \begin{align*}
        h_{k,k} = \frac{h_{k,k}h_{k+1,k+1}\dots h_{N-1,N-1}}{h_{k+1,k+1}h_{k+2,k+2}\dots h_{N-1,N-1}} = \frac{Q(N-1,N-k,k)}{Q(N-1,N-k-1,k+1)} .
    \end{align*}

    \item For each $i=k+1,k+2,\dots,N-2$, we have
    \begin{align*}
        h_{i,k} = \frac{1}{Q(N-1,N-i-1,i+1)} \left[ Q(N-1,N-i,k) - \sum_{\ell=k+1}^i \left(\sum_{m=k}^{\ell-1} h_{m,k}\right) Q(N-1,N-i-1,\ell) \right] - \sum_{m=k}^{i-1} h_{m,k} ,
    \end{align*}
    which is a restatement of the second recursion in \cref{lemma:P_Q_identities}.
    Note that this formula only relies on the values of $Q(N-1,\cdot,\cdot)$ and $h_{m,k}$ with $m=k,\dots,i-1$, which are assumed to be already found.
    \item Finally, $h_{N-1,k} = Q(N-1,1,k) - \sum_{m=k}^{N-2} h_{m,k}$.
\end{enumerate}

Note that under H-invariance we have $Q(N-1,N-j,j) = h_{j,j} h_{j+1,j+1} \dots h_{N-1,N-1} \ne 0$ for any $j=1,\dots,N-1$, so there is no issue with division by these terms.
In fact, the above procedure more broadly establishes the one-to-one correspondence between matrices $H$ with nonzero diagonal entries and $\big\{Q(N-1,k,j)\big\}_{\substack{j=1,\dots,N-1\\k=1,\dots,N-j}}$ satisfying $Q(N-1,N-j,j) \ne 0$ for all $j$, even without H-invariance or exact optimality.

\begin{proposition}
\label{proposition:OHM-is-that-algorithm}
The unique optimal $H\in \reals^{(N-1) \times (N-1)}$ satisfying \eqref{eqn:sparsity-pattern-OHM} for $j=1,\dots,N-2$ is that of OHM.
\end{proposition}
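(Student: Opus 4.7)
The plan is to exploit the bijection between H-matrices with nonzero diagonal entries and Q-value tableaux $\{Q(N-1,k,j)\}$ satisfying $Q(N-1,j,N-j)\neq 0$, established by the recovery procedure described immediately after~\Cref{proposition:OHM-Q-function-values}. Since Propositions~\ref{proposition:OHM-linear-equation} and~\ref{proposition:OHM-Q-function-values} already pin down $Q(N-1,k,j) = \frac{j}{N}\binom{N-j-1}{k-1}$ for any optimal $H$ satisfying~\eqref{eqn:sparsity-pattern-OHM}, it suffices to verify that $H_{\text{OHM}}$ itself satisfies both H-invariance and~\eqref{eqn:sparsity-pattern-OHM}; the bijection then forces the $H$ produced by the procedure to coincide with $H_{\text{OHM}}$.

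H-invariance of $H_{\text{OHM}}$ is immediate from~\Cref{theorem:H-invariance-necessity} combined with the exact optimality of OHM (\Cref{theorem:OHM-rate}). For~\eqref{eqn:sparsity-pattern-OHM}, I would invoke the classical Lyapunov proof recalled in the remark after~\Cref{theorem:anytime-optimal-algorithm-is-unique}: telescoping the identity $V_k - V_{k+1} = k(k+1)\inprod{x_{k+1}-x_k}{g_{k+1}-g_k}$ with $V_k = k^2\sqnorm{g_k} + k\inprod{g_k}{x_k-y_0}$ from $k=1$ to $N-1$, and noting that $V_1 = 0$ since $x_1 = y_0 - g_1$, produces (after dividing by $N$) an equality of the exact form~\eqref{eqn:H-symmetry-sufficiency-core-identity} in which the only nonzero multipliers are $\lambda_{k+1,k} = \frac{k(k+1)}{N}$. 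By the uniqueness portion of~\Cref{theorem:lambda-characterization} (Part~5 of its proof, resting on~\Cref{lemma:lambda-are-unique-solutions}), this sparse set of multipliers must coincide with $\lambda^\star(H_{\text{OHM}})$, so $\lambda^\star_{k,j}(H_{\text{OHM}}) = 0$ for all $k\ge j+2$.

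The argument is essentially a synthesis of already-established facts. The only subtlety worth flagging is that the recovery procedure must be well-defined at every step for the target Q-values: this reduces to checking that the denominators $Q(N-1,N-i-1,i+1) = \frac{i+1}{N}\binom{0}{N-i-2} = \frac{i+1}{N} > 0$ are nonzero for each $i$, which is immediate from the formula in Proposition~\ref{proposition:OHM-Q-function-values}. Once this is in place, there is nothing further to grind through beyond bookkeeping.
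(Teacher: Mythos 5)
Your proposal is correct and takes essentially the same route as the paper, which also argues indirectly: OHM is \emph{an} exact optimal algorithm with the sparsity pattern \eqref{eqn:sparsity-pattern-OHM} (via the classical Lyapunov/telescoping proof yielding \eqref{eqn:H-symmetry-sufficiency-core-identity} with only $\lambda_{k+1,k}$ nonzero), and the preceding propositions plus the $H\leftrightarrow Q$ bijection show such an algorithm is unique. You supply more detail than the paper's one-line remark (which is fine); the only blemish is the cosmetic miswriting of $Q(N-1,N-i-1,i+1)=\frac{i+1}{N}\binom{N-i-2}{N-i-2}$ as $\frac{i+1}{N}\binom{0}{N-i-2}$, though the value $\frac{i+1}{N}>0$ and the conclusion are unaffected.
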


Although we can certainly verify this via direct computations, for simplicity, we instead leave a brief indirect ``proof'': OHM is \textit{an} exact optimal H-matrix representable algorithm that has the sparsity pattern of \cref{proposition:OHM-linear-equation}, and our discussion above shows that it is a unique such one.

\subsubsection{Case of Dual-OHM}

We can similarly derive Dual-OHM from its $\lambda^\star$-sparsity pattern: $\lambda_{k,j}^\star = 0$ for $j=1,\dots,N-2$ and $k=j+1,\dots,N-1$, as shown in \cite{YoonKimSuhRyu2024_optimal}.

\begin{proposition}
\label{proposition:Dual-OHM-linear-equation}
For each fixed $j=1,\dots,N-2$, the system of $N-j-1$ equations
\begin{align}
\label{eqn:sparsity-pattern-Dual-OHM}
\tag{bottom-sparsity}
    \lambda^\star_{j+1,j} = \lambda^\star_{j+2,j} = \cdots = \lambda^\star_{N-1,j} = 0
\end{align}
is equivalent to a full-rank linear system~\eqref{eqn:Dual-OHM-Q-linear-system} of $N-j-1$ equations. Its (one-dimensional) solution is characterized by 
\begin{align}
\label{eqn:Q-relationship-formula-dual-OHM}
    Q(N-1,k,j) = \frac{N-j+1}{k+1}\binom{N-j-1}{k-1} Q(N-1,N-j,j) 
\end{align} 
for $k=1,\dots,N-j-1$.
\end{proposition}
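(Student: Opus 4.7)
The plan mirrors \cref{proposition:OHM-linear-equation}. Starting from the explicit expression for $\lambda_{k,j}^\star$ in \cref{theorem:lambda-characterization} (the $k\le N-1$ case), each equation $\lambda_{k,j}^\star = 0$ rewrites as
\[
\sum_{\ell=1}^{N-j}(-1)^{\ell-1} p_k(\ell)\,Q(N-1,\ell,j) = 0, \qquad p_k(\ell) := \sum_{m=1}^{N-k}(-1)^m\binom{\ell+m}{m}Q(N-1,m,k),
\]
which is linear in the $N-j$ variables $Q(N-1,\ell,j)$. Assembling these for $k\in\{j+1,\ldots,N-1\}$ produces the linear system~\eqref{eqn:Dual-OHM-Q-linear-system}.

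I would next verify that~\eqref{eqn:Q-relationship-formula-dual-OHM} lies in the kernel of this system. After factoring out $Q(N-1,N-j,j)$ and regrouping by $m$, the verification reduces to the combinatorial identity
\[
\sum_{\ell=1}^{n+1}(-1)^{\ell-1}\binom{\ell+m}{m}\frac{1}{\ell+1}\binom{n}{\ell-1} = 0 \quad\text{for all } m = 1,\ldots,n, \ \text{ where } n := N-j-1.
\]
My approach would be to substitute $r=\ell-1$, use the kernel rewrite $\frac{1}{r+2}\binom{n}{r} = \frac{r+1}{(n+1)(n+2)}\binom{n+2}{r+2}$, and collapse the product via $(r+1)\binom{r+m+1}{m} = (m+1)\binom{r+m+1}{m+1}$. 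After the shift $s=r+2$ (extendable to $s=0,1$ at no cost since $\binom{m-1}{m+1}=\binom{m}{m+1}=0$), the identity becomes $\sum_{s=0}^{n+2}(-1)^s\binom{s+m-1}{m+1}\binom{n+2}{s}=0$, which follows from the standard vanishing $\sum_{s=0}^{N}(-1)^s\binom{N}{s}P(s)=0$ for any polynomial $P$ of degree below $N$, since $\binom{s+m-1}{m+1}$ has degree $m+1\le n+1 < n+2$ in $s$.

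For the full-rank claim, each $p_k$ is a polynomial in $\ell$ of degree exactly $N-k$: its leading coefficient is proportional to $Q(N-1,N-k,k) = h_{k,k}h_{k+1,k+1}\cdots h_{N-1,N-1}$, which is nonzero under H-invariance because $P(N-1,N-1)=1/N$ forces every diagonal entry $h_{i,i}$ to be nonzero. Thus $p_{j+1},\ldots,p_{N-1}$ have pairwise distinct degrees $N-j-1,\ldots,1$ and are linearly independent as polynomials. If a combination $\sum_k c_k p_k$ vanished at the $N-j$ distinct points $\ell=1,\ldots,N-j$, then, having degree at most $N-j-1$, it would vanish identically, forcing all $c_k=0$; hence the coefficient matrix has full row rank $N-j-1$ and the solution space is exactly one-dimensional, spanned by~\eqref{eqn:Q-relationship-formula-dual-OHM}. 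The main obstacle I anticipate is the awkward $\frac{1}{\ell+1}$ factor in the verification identity, which blocks any direct finite-difference cancellation; the two rewrites above are precisely what is needed to convert the sum into a clean polynomial-vanishing statement, after which the remainder of the argument is essentially automatic.
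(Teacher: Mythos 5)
Your proof is correct, but it takes a genuinely different route from the paper's. The paper first converts the quadratic system $\lambda^\star_{j+1,j}=\cdots=\lambda^\star_{N-1,j}=0$ into the pure-binomial system \eqref{eqn:Dual-OHM-Q-linear-system} by a backward induction on $k$ (each $\lambda^\star_{k,j}=0$ collapses to the single row $\sum_\ell(-1)^{\ell-1}\binom{\ell+N-k}{N-k}Q(N-1,\ell,j)=0$ once the previously derived rows are used, since $Q(N-1,N-k,k)=h_{k,k}\cdots h_{N-1,N-1}\neq0$), and then establishes full rank and the kernel by an explicit two-stage Gaussian elimination followed by a Chu--Vandermonde identity. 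You instead keep the equations in the form $\sum_\ell(-1)^{\ell-1}p_k(\ell)Q(N-1,\ell,j)=0$ with polynomial coefficients $p_k$, verify kernel membership by reducing to the vanishing of an $(n+2)$-nd finite difference of a polynomial of degree $m+1\le n+1$, and get full rank from the fact that $p_{j+1},\dots,p_{N-1}$ have pairwise distinct degrees $N-j-1,\dots,1$ (with nonzero leading coefficients, by the same diagonal-nonvanishing fact) so that any nontrivial combination is a nonzero polynomial of degree at most $N-j-1$ and cannot vanish at the $N-j$ points $\ell=1,\dots,N-j$. Both arguments are sound; yours trades the paper's explicit elimination (which for the Dual-OHM pattern needs a second, less obvious round of row operations) for a cleaner degree/interpolation argument, and your $\tfrac{1}{\ell+1}$-absorbing rewrites correctly handle the factor that blocks a naive finite-difference cancellation. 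One small imprecision: the system you assemble is not literally \eqref{eqn:Dual-OHM-Q-linear-system} but is row-equivalent to it via a triangular change of basis with diagonal entries $\pm Q(N-1,N-k,k)\neq0$; this is harmless for the conclusion (same solution set, same rank), but you should say it explicitly if you want to claim equivalence to that specific system.
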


\begin{proposition}
\label{proposition:Dual-OHM-is-that-algorithm-with-Qs}
The conditions \eqref{eqn:Q-relationship-formula-dual-OHM} from \cref{proposition:Dual-OHM-linear-equation} and H-invariance~\eqref{eqn:H-invariance-Q-sum} uniquely determine $Q(N-1,k,j) = \frac{1}{k+1} \binom{N-j-1}{k-1}$ for $j=1,\dots,N-1$ and $k=1,\dots,N-j$.
The unique $H$ satisfying these conditions is that of Dual-OHM.
\end{proposition}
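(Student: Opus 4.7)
The plan is to parallel the analysis of OHM in Section~\ref{subsubsection:re-deriving-OHM}, splitting the argument into three steps: (i) verify that the claimed closed form $Q(N-1,k,j) = \frac{1}{k+1}\binom{N-j-1}{k-1}$ is consistent with both the shape relation~\eqref{eqn:Q-relationship-formula-dual-OHM} and the H-invariance sum~\eqref{eqn:H-invariance-Q-sum}; (ii) invoke the iterative determination procedure following \cref{proposition:OHM-linear-equation} (with \cref{proposition:Dual-OHM-linear-equation} substituted) to conclude that the $Q$-table, and hence $H$, is uniquely pinned down; and (iii) identify this unique $H$ with $H_{\text{Dual-OHM}}$.

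For step (i), evaluating the formula at $k = N-j$ gives $Q(N-1,N-j,j) = 1/(N-j+1)$, and plugging back into \eqref{eqn:Q-relationship-formula-dual-OHM} reproduces $\frac{1}{k+1}\binom{N-j-1}{k-1}$ for $k < N-j$, so the shape relation is satisfied. For the H-invariance sum, I would substitute the formula and use the change of index $i = N-j-1$ together with the hockey-stick identity:
\begin{align*}
    \sum_{j=1}^{N-m} \frac{1}{m+1}\binom{N-j-1}{m-1} = \frac{1}{m+1}\sum_{i=m-1}^{N-2}\binom{i}{m-1} = \frac{1}{m+1}\binom{N-1}{m},
\end{align*}
which a one-line factorial manipulation shows equals $\frac{1}{N}\binom{N}{m+1} = P(N-1,m)$.

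For step (ii), the iterative procedure from Section~\ref{subsubsection:re-deriving-OHM} transfers almost verbatim: the base case $Q(N-1,N-1,1) = h_{1,1}\cdots h_{N-1,N-1} = P(N-1,N-1) = 1/N$ is forced by H-invariance; then \eqref{eqn:Q-relationship-formula-dual-OHM} determines the rest of column $j=1$, the H-invariance equation at $m = N-2$ fixes $Q(N-1,N-2,2)$, which unlocks column $j=2$ via \eqref{eqn:Q-relationship-formula-dual-OHM}, and induction continues. Because $Q(N-1,N-j,j) = 1/(N-j+1) \ne 0$ for every $j$, the diagonal entries of the corresponding $H$ are all nonzero, and the $Q \to H$ inversion recipe recalled in Section~\ref{subsubsection:re-deriving-OHM} applies to recover a unique $H$. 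For step (iii), I would follow the same indirect argument as for OHM: Dual-OHM is an exact optimal H-matrix representable algorithm \citep{YoonKimSuhRyu2024_optimal}, so by \cref{theorem:H-invariance-necessity} its H-matrix satisfies H-invariance, and the convergence proof given in \citet{YoonKimSuhRyu2024_optimal} realizes exactly the sparsity pattern \eqref{eqn:sparsity-pattern-Dual-OHM}; the uniqueness from step (ii) then forces $H = H_{\text{Dual-OHM}}$. I expect the main obstacle to be not any individual verification---each is a short combinatorial manipulation---but the careful bookkeeping required to confirm that the iterative procedure for Dual-OHM does not degenerate, i.e., that at every stage the shape relation of \cref{proposition:Dual-OHM-linear-equation} and the appropriate partial H-invariance constraint together determine the next $Q$-entries uniquely, mirroring the OHM walkthrough but with the new parametrization.
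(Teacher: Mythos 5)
Your proposal is correct and follows essentially the same route as the paper: verify that the closed form satisfies the shape relation \eqref{eqn:Q-relationship-formula-dual-OHM} trivially, check H-invariance via a hockey-stick-type identity (the paper invokes \cref{lemma:generalized-hockeystick} with $r=0$, which after your reindexing is the same computation), appeal to the iterative procedure of Section~\ref{subsubsection:re-deriving-OHM} for uniqueness of the Q-profile and hence of $H$, and conclude $H = H_{\text{Dual-OHM}}$ by the indirect argument that Dual-OHM is a known optimal algorithm realizing the sparsity pattern \eqref{eqn:sparsity-pattern-Dual-OHM}. No gaps.
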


\subsection{New algorithms from mixed sparsity patterns}

In the previous section, we discovered that the sparsity patterns \eqref{eqn:sparsity-pattern-OHM} and \eqref{eqn:sparsity-pattern-Dual-OHM} respectively reduce to explicit linear relations~\eqref{eqn:Q-relationship-formula-OHM} and \eqref{eqn:Q-relationship-formula-dual-OHM} for Q-functions.
In both of these cases, we can write
\begin{align}
\label{eqn:new-algorithm-search-ckj}
    Q(N-1,k,j) = c_{k,j} Q(N-1,N-j,j)
\end{align}
for some constants $c_{k,j}$, where $c_{k,j}$ is either $\binom{N-j-1}{k-1}$ or $\frac{N-j+1}{k+1}\binom{N-j-1}{k-1}$, depending on which sparsity pattern is chosen for each $j=1,\dots,N-2$.

In this section, we will consider the mixed sparsity patterns where we choose \eqref{eqn:sparsity-pattern-OHM} for a subset of $j$'s and \eqref{eqn:sparsity-pattern-Dual-OHM} for the rest.
Combining the resulting linear relations \eqref{eqn:new-algorithm-search-ckj} with the H-invariance condition~\eqref{eqn:H-invariance-Q-sum}, we can uniquely characterize $Q(N-1,\cdot,\cdot)$ and $H$.
We show that the H-matrices produced by this process indeed define new optimal algorithms.

\subsubsection{Selecting \eqref{eqn:sparsity-pattern-OHM} for $j=1,\dots,N'-1$ and \eqref{eqn:sparsity-pattern-Dual-OHM} for $j=N',\dots,N-1$}
\label{section:self-dual-optimal-algorithm}

In this sparsity pattern, with a fixed $2\le N'\le N-2$, we have $\lambda^\star_{j+1,j}\ne 0$ for $j=1,\dots,N'-1$, $\lambda^\star_{N,j}\ne 0$ for $j=N',\dots,N-1$, and all the other H-certificates are $0$.

\begin{proposition}
\label{proposition:self-dual-family-Q-functions}
The unique set of values of the Q-functions $Q(N-1,\cdot,\cdot)$ satisfying \eqref{eqn:sparsity-pattern-OHM} for $j=1,\dots,N'-1$, \eqref{eqn:sparsity-pattern-Dual-OHM} for $j=N',\dots,N-1$ and the H-invariance condition~\eqref{eqn:H-invariance-Q-sum} is given by
\begin{align}
\label{eqn:Q-formula-self-dual-family}
    Q(N-1,k,j) = \begin{cases}
        \frac{j}{N}\binom{N-j-1}{k-1} & \text{for } j=1,\dots,N'-1 \\
        \frac{N'(N-N'+1)}{N(k+1)}\binom{N-N'-1}{k-1} & \text{for } j=N' \\
        \frac{1}{k+1}\binom{N-j-1}{k-1} & \text{for } j=N'+1,\dots,N-1 .
    \end{cases}
\end{align}
where $k=1,\dots,N-j$. With these Q-functions, we have  $\lambda^\star_{j+1,j} > 0$ for $j=1,\dots,N'-1$ and $\lambda^\star_{N,j} > 0$ for $j=N',\dots,N-1$, so the corresponding $H$ represents a minimax optimal algorithm.
\end{proposition}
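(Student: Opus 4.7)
The plan is to reduce the Q-function unknowns using the two sparsity characterizations, pin them down with H-invariance, and then verify positivity of the surviving H-certificates.

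First, I would invoke \cref{proposition:OHM-linear-equation} at each $j \in \{1,\dots,N'-1\}$ and \cref{proposition:Dual-OHM-linear-equation} at each $j \in \{N',\dots,N-1\}$. These reduce the sparsity requirements to the one-parameter shape relations \eqref{eqn:Q-relationship-formula-OHM} and \eqref{eqn:Q-relationship-formula-dual-OHM} respectively, leaving only the scalars $c_j := Q(N-1,N-j,j)$ free. Direct substitution then confirms that \eqref{eqn:Q-formula-self-dual-family} satisfies both shape relations, with $c_j = \frac{j}{N}$ for $j \le N'-1$, $c_{N'} = \frac{N'}{N}$, and $c_j = \frac{1}{N-j+1}$ for $j \ge N'+1$.

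Next I would verify H-invariance \eqref{eqn:H-invariance-Q-sum} for every $m$. Three cases arise depending on whether $N-m$ is less than, equal to, or greater than $N'$. The first two follow almost immediately from the OHM summation identity proved in \cref{proposition:OHM-Q-function-values}. The third, mixed case is the main combinatorial content: the sum splits into an OHM segment (for $j<N'$), the junction term at $j=N'$, and a Dual-OHM segment (for $j>N'$), and one must show that the three pieces recombine to $\frac{1}{N}\binom{N}{m+1}$. I would do this by applying the OHM identity to the OHM segment, the hockey-stick identity (as used in \cref{proposition:Dual-OHM-is-that-algorithm-with-Qs}) to the Dual-OHM segment, and then contracting via $\binom{N-N'-1}{m-1} = \frac{m}{N-N'}\binom{N-N'}{m}$ and Pascal's rule. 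Uniqueness of the $c_j$'s follows from the recursive procedure of Section~\ref{subsubsection:re-deriving-OHM}: starting from $c_1 = P(N-1,N-1) = \frac{1}{N}$, each H-invariance equation with $m=N-j$ uniquely determines $c_j$ given $c_1,\dots,c_{j-1}$, and the bijection between nonzero-diagonal lower-triangular $H$ and Q-values established there gives uniqueness of $H$.

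Finally, for positivity of the nonzero H-certificates, I would substitute the closed form of $Q$ into the formulas of \cref{theorem:lambda-characterization}. For $j \ge N'$, the expression for $\lambda^\star_{N,j}$ collapses to a single alternating sum, and the Beta-integral identity $\sum_{\ell=0}^{n}\frac{(-1)^\ell}{\ell+2}\binom{n}{\ell} = \int_0^1 x(1-x)^n\,dx = \frac{1}{(n+1)(n+2)}$ then yields $\lambda^\star_{N,N'} = \frac{N'}{N-N'}$ and $\lambda^\star_{N,j} = \frac{N}{(N-j)(N-j+1)}$ for $j > N'$---both positive. For $j \le N'-1$, the inner $\ell$-sum in the double-sum formula for $\lambda^\star_{k,j}$ is handled using the finite-difference identity $\sum_{a=0}^{n}(-1)^a\binom{n}{a}\binom{a+r}{s} = (-1)^n\binom{r}{s-n}$: it vanishes unless $k \le j+1$ (reconfirming the OHM sparsity) and for $k=j+1$ reduces to $\lambda^\star_{j+1,j} = j \cdot Q(N-1,N-j-1,j+1)$, which evaluates to $\frac{j(j+1)}{N}$ for $j \le N'-2$ and $\frac{N'(N'-1)}{N}$ for $j = N'-1$. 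All are positive, so \cref{theorem:main-result} gives minimax optimality.

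The step I anticipate as the main obstacle is the combinatorial contraction in the mixed case $N-m > N'$ of H-invariance: the three-piece decomposition must collapse exactly, and this is the single place in the argument where the specific coefficient $\frac{N'(N-N'+1)}{N(k+1)}$ at the junction $j=N'$ is essential---it is forced by the requirement of simultaneously matching the tail of the OHM chain (coming from smaller $j$) and the head of the Dual-OHM chain (coming from larger $j$). The positivity computations are conceptually lighter but rely on two nontrivial binomial identities that must be applied verbatim.
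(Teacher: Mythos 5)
Your proposal is correct and follows essentially the same route as the paper: reduce the sparsity patterns to the shape relations of Propositions~\ref{proposition:OHM-linear-equation} and \ref{proposition:Dual-OHM-linear-equation}, verify H-invariance by splitting the sum into the OHM segment, the junction term at $j=N'$, and the Dual-OHM segment, and obtain uniqueness from the recursive procedure of Section~\ref{subsubsection:re-deriving-OHM}. The only difference is in the positivity step, where the paper observes that $\lambda^\star_{j+1,j}$ for $j\le N'-2$ and $\lambda^\star_{N,j}$ for $j\ge N'+1$ coincide with OHM's and Dual-OHM's (already known positive) certificates and explicitly computes only the two junction values $\lambda^\star_{N',N'-1}=\frac{N'(N'-1)}{N}$ and $\lambda^\star_{N,N'}=\frac{N'}{N-N'}$, whereas you evaluate all surviving certificates in closed form—your values agree with the paper's where they overlap.
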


Note that in \cref{proposition:self-dual-family-Q-functions}, we have verified the optimality of $H$ using the information of its Q-functions only, without even computing $H$ explicitly.
However, the precise form of $H$ is of interest on its own, which we discuss next.
For convenience, denote the H-matrix of OHM up to $k$ steps by $H_{\text{OHM}}(k) \in \reals^{k\times k}$ and its anti-diagonal transpose (H-dual) by $H_{\text{OHM}}^\at (k) \in \reals^{k\times k}$.

\begin{proposition}
\label{proposition:self-dual-family-H-matrix}
The H-matrix corresponding to the Q-function profile \eqref{eqn:Q-formula-self-dual-family} is given by
\begin{align}
\label{eqn:H-matrix-self-dual-family}
    H = \begin{bmatrix}
        \begin{array}{c|cc}
             H_{\text{OHM}}(N'-1) & \\ \begin{array}{ccc} h_{N',1} & \cdots & h_{N',N'-1} \end{array} & h_{N',N'} \\
            \hline 
            0_{(N-N'-1) \times (N'-1)} & \begin{array}{c} h_{N'+1,N'} \\ \vdots \\ h_{N-1,N'} \end{array} & H_{\text{OHM}}^\at(N-N'-1)
        \end{array}
    \end{bmatrix} 
\end{align}
where $h_{N',N'} = \frac{N'(N-N')}{N}$ and
\begin{align*}
\begin{cases}
    h_{N',j} = j\left(\frac{1}{N} - \frac{1}{N'}\right) & \text{for } j=1,\dots,N'-1 \\
    h_{k,N'} = (N-k)\left(\frac{1}{N} - \frac{1}{N-N'}\right) & \text{for } k=N'+1,\dots,N-1 .
\end{cases}
\end{align*}
\end{proposition}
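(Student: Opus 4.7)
The plan is to exploit the bijection between Q-function profiles (with $Q(N-1,j,N-j) \ne 0$ for all $j$) and H-matrices with nonzero diagonal entries, as established in \cref{subsubsection:re-deriving-OHM}. Since \cref{proposition:self-dual-family-Q-functions} already pins down the Q-function profile \eqref{eqn:Q-formula-self-dual-family} uniquely, the corresponding $H$ is also unique, so it suffices to verify that the recovery procedure of \cref{subsubsection:re-deriving-OHM} applied to \eqref{eqn:Q-formula-self-dual-family} outputs the matrix \eqref{eqn:H-matrix-self-dual-family}.

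I would work column by column. For the diagonal entries, the ratio formula $h_{k,k} = Q(N-1,N-k,k)/Q(N-1,N-k-1,k+1)$ applied to \eqref{eqn:Q-formula-self-dual-family} immediately yields $k/(k+1)$ for $k\le N'-1$ (matching $H_{\text{OHM}}(N'-1)$), $N'(N-N')/N$ for $k=N'$, and $(N-k)/(N-k+1)$ for $k\ge N'+1$ (matching $H_{\text{OHM}}^\at(N-N'-1)$ in the appropriate position). For the off-diagonal entries, I would invoke the $Q$-recursion from \cref{lemma:P_Q_identities}, namely $Q(N-1,m+1,j) = \sum_{\ell = j+1}^{N-m} Q(N-1,m,\ell) \sum_{i=j}^{\ell-1} h_{i,j}$, and inductively determine $h_{i,j}$ in the order prescribed by \cref{subsubsection:re-deriving-OHM}.

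The verification then splits into three regimes: (i) columns $k \le N'-1$, where induction on $i$ should give $h_{i,k} = -k/(i(i+1))$ for $k < i \le N'-1$ (OHM), followed by $h_{N',k} = k(1/N - 1/N')$ and zeros for $N' < i \le N-1$; (ii) the middle column $k=N'$, where the same recursion should yield $h_{i,N'} = (N-i)(1/N - 1/(N-N'))$ for $i>N'$; (iii) columns $k \ge N'+1$, where the computation mirrors (i) by the symmetry $Q(N-1,k,j)$ for $j \ge N'+1$ being a scaled reflection of OHM's Q-profile. The $H_{\text{OHM}}^\at(N-N'-1)$ structure of the bottom-right block then follows because the H-dual operation corresponds precisely to this reflection of Q-functions, as already used in \cref{proposition:Dual-OHM-is-that-algorithm-with-Qs}.

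The main obstacle will be the combinatorial bookkeeping needed to verify the $Q$-recursion at each step, and I expect two computations to be the most delicate. First, establishing the zero block $h_{i,k} = 0$ for $i > N'$, $k < N'$ requires showing that the contributions from the $Q$-recursion cancel---this should reduce to an alternating binomial identity provable via \cref{lemma:chu-vandermonde}. Second, verifying the middle row and column entries requires tracking how the non-OHM Q-value $Q(N-1,k,N') = \tfrac{N'(N-N'+1)}{N(k+1)}\binom{N-N'-1}{k-1}$ propagates through the recursion and consistently interpolates between the two OHM regimes. Both computations, while tedious, should proceed by standard binomial identities (Chu--Vandermonde and the Pascal rule) rather than requiring new ideas, and can be relegated to an appendix along the lines of \cref{section:new-algorithm-proofs}.
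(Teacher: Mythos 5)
Your proposal is sound and rests on the same two pillars as the paper's proof — the bijection between Q-profiles and H-matrices established in \cref{subsubsection:re-deriving-OHM}, and the recursion $Q(N-1,m+1,j)=\sum_{\ell}Q(N-1,m,\ell)\sum_{i=j}^{\ell-1}h_{i,j}$ from \cref{lemma:P_Q_identities} — but you run the verification in the opposite direction. You propose to apply the Q-to-H recovery procedure to the profile \eqref{eqn:Q-formula-self-dual-family} and check that it reproduces \eqref{eqn:H-matrix-self-dual-family}; the paper instead takes the candidate $H$ as given and computes its Q-functions, checking they match \eqref{eqn:Q-formula-self-dual-family}. Both directions are legitimate given uniqueness, but the paper's choice is noticeably lighter: the columns $j\ge N'+1$ are dispatched instantly because those Q-values depend only on the bottom-right block (which is Dual-OHM for the reduced horizon $N-N'$), and the columns $j\le N'$ reduce to a single induction on the degree $m$ driven by the closed form $\sum_{i=j}^{\ell-1}h_{i,j}=j/\ell$ (for $\ell\le N'$) or $j/N$ (for $\ell\ge N'+1$). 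In particular, the zero block never has to be \emph{derived} — it merely makes those partial column sums constant — whereas in your direction the vanishing of $h_{i,k}$ for $i>N'$, $k<N'$ and the interpolation through row/column $N'$ must be extracted from the recovery recursion, which is exactly where the alternating-sum cancellations you flag would appear. Your diagonal-entry computation via $h_{k,k}=Q(N-1,N-k,k)/Q(N-1,N-k-1,k+1)$ checks out in all three regimes; the off-diagonal verification remains a plan rather than a proof, with the binomial-identity work (Chu--Vandermonde plus the hockey-stick-type sums of \cref{lemma:simple-combinatorial-summations}) still to be carried out, just as the paper defers its analogous computations to \cref{section:new-algorithm-proofs}.
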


The characterization \eqref{eqn:H-matrix-self-dual-family} shows that in particular, when $N$ is even and $N' = \frac{N}{2}$, we have $H=H^\at$, i.e., the corresponding algorithm is \emph{self-dual} with respect to the H-dual operation.

\subsubsection{Selecting \eqref{eqn:sparsity-pattern-Dual-OHM} for $j=1,\dots,N'-1$ and \eqref{eqn:sparsity-pattern-OHM} for $j=N',\dots,N-1$}
\label{section:second-mixture-optimal-algorithm}

In this sparsity pattern, with a fixed $2\le N'\le N-2$, we have $\lambda^\star_{N,j}\ne 0$ for $j=1,\dots,N'-1$, $\lambda^\star_{j+1,j}\ne 0$ for $j=N',\dots,N-1$, and all the other H-certificates are $0$.

\begin{proposition}
\label{proposition:second-mixture-family-Q-functions}
The unique set of values of the Q-functions $Q(N-1,\cdot,\cdot)$ satisfying \eqref{eqn:sparsity-pattern-Dual-OHM} for $j=1,\dots,N'-1$, \eqref{eqn:sparsity-pattern-OHM} for $j=N',\dots,N-1$ and the H-invariance condition~\eqref{eqn:H-invariance-Q-sum} is given by
\begin{align}
\label{eqn:Q-formula-second-mixture-family}
     Q(N-1,k,j) = \begin{cases}
        \frac{1}{k+1} \binom{N-j-1}{k-1} & \text{for } j=1,\dots,N'-1 \\
        \frac{j-N'+1}{N-N'+1} \binom{N-j-1}{k-1} & \text{for } j=N',\dots,N-1 .
    \end{cases}
\end{align}
where $k=1,\dots,N-j$. With these Q-functions, we have  $\lambda^\star_{N,j} > 0$ for $j=1,\dots,N'-1$ and $\lambda^\star_{j+1,j} > 0$ for $j=N',\dots,N-1$, so the corresponding $H$ represents a minimax optimal algorithm.
\end{proposition}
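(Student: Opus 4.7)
The plan mirrors the proof of the companion result \cref{proposition:self-dual-family-Q-functions}, with the two sparsity patterns exchanged. Applying \cref{proposition:Dual-OHM-linear-equation} to $j=1,\dots,N'-1$ reduces \eqref{eqn:sparsity-pattern-Dual-OHM} to $Q(N-1,k,j) = \tfrac{N-j+1}{k+1}\binom{N-j-1}{k-1}Q(N-1,N-j,j)$, while applying \cref{proposition:OHM-linear-equation} to $j=N',\dots,N-1$ reduces \eqref{eqn:sparsity-pattern-OHM} to $Q(N-1,k,j) = \binom{N-j-1}{k-1}Q(N-1,N-j,j)$. In each column $j$ this leaves the single unknown $Q(N-1,N-j,j)$, which is determined by the H-invariance identities via the sequential procedure of Steps~1--2 following \cref{proposition:OHM-linear-equation}; hence $Q(N-1,\cdot,\cdot)$ is unique. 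The proposed formula \eqref{eqn:Q-formula-second-mixture-family} satisfies the scalar relations above upon specializing to $k=N-j$, so the content of the uniqueness claim reduces to verifying H-invariance.

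For H-invariance, split $\sum_{j=1}^{N-m} Q(N-1,m,j)$ at the index $N'$. A single hockey-stick identity gives the first piece as $\tfrac{1}{m+1}\bigl[\binom{N-1}{m}-\binom{N-N'}{m}\bigr]$. The second piece, after reindexing $i=N-j-1$, becomes $\tfrac{1}{N-N'+1}\bigl[(N-N')\sum_i \binom{i}{m-1} - \sum_i i\binom{i}{m-1}\bigr]$; expanding $i\binom{i}{m-1}=(m-1)\binom{i}{m-1}+m\binom{i}{m}$, applying hockey-stick twice more, and invoking the algebraic simplification $(m+1)(N-N'-m+1)-m(N-N'-m)=N-N'+1$, one obtains $\tfrac{\binom{N-N'}{m}}{m+1}$. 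Adding the two pieces yields $\tfrac{\binom{N-1}{m}}{m+1} = \tfrac{1}{N}\binom{N}{m+1}$, as required. (The edge regime $m>N-N'+1$, where only the first piece contributes, is a straightforward variant giving the same answer.)

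For positivity of the non-vanishing H-certificates, invoke \cref{theorem:lambda-characterization}. For $j\in\{1,\dots,N'-1\}$ the single-sum formula gives
\[
\lambda^\star_{N,j} = N\sum_{m=1}^{N-j}(-1)^{m-1}\tfrac{1}{m+1}\binom{N-j-1}{m-1} = N\int_0^1 x(1-x)^{N-j-1}\,dx = \frac{N}{(N-j)(N-j+1)} > 0,
\]
using $\tfrac{1}{m+1}=\int_0^1 x^m\,dx$ and the binomial theorem. For $j\in\{N',\dots,N-2\}$, both $Q$-factors entering $\lambda^\star_{j+1,j}$ are binomial coefficients scaled by constants $c_j=\tfrac{j-N'+1}{N-N'+1}$ and $c_{j+1}=\tfrac{j-N'+2}{N-N'+1}$, so the double-sum in \cref{theorem:lambda-characterization} factorizes as $N\,c_j\,c_{j+1}\cdot S$ with $S$ depending only on $N-j$ and independent of the prefactors. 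The same factorization applied to the OHM Q-profile of \cref{proposition:OHM-Q-function-values} (prefactors $j/N$ and $(j+1)/N$) gives $\lambda^\star_{j+1,j}=\tfrac{j(j+1)}{N}S$; comparing against the value $\tfrac{j(j+1)}{N}$ independently obtainable from the OHM Lyapunov analysis~\eqref{eqn:OHM-Lyapunov} forces $S=1$ for every admissible $N$ and $j$. Hence $\lambda^\star_{j+1,j}=\tfrac{N(j-N'+1)(j-N'+2)}{(N-N'+1)^2}>0$, while the endpoint $j=N-1$ follows from the single-sum formula $\lambda^\star_{N,N-1}=N\,Q(N-1,1,N-1)=\tfrac{N(N-N')}{N-N'+1}>0$. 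All remaining H-certificates vanish by the chosen sparsity pattern, so \cref{theorem:main-result} delivers optimality.

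The main obstacle is the H-invariance verification, which requires careful hockey-stick bookkeeping in both pieces together with the non-obvious algebraic cancellation above; the positivity claims then come essentially for free, via a Beta integral for the single-sum case and by ``borrowing'' $S=1$ from the OHM calculation for the double-sum case.
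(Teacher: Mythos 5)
Your proposal is correct and follows the same overall skeleton as the paper's proof: reduce the two sparsity patterns to the scalar relations of \cref{proposition:Dual-OHM-linear-equation} and \cref{proposition:OHM-linear-equation}, pin down the remaining degrees of freedom by the sequential procedure, verify H-invariance of the candidate profile by splitting the sum at $N'$, and then check positivity of the surviving certificates. The H-invariance step is the same computation (your expansion $i\binom{i}{m-1}=(m-1)\binom{i}{m-1}+m\binom{i}{m}$ just re-derives what the paper gets from \cref{lemma:simple-combinatorial-summations}(b)). Where you genuinely diverge is the positivity step. For $j\le N'-1$ the paper simply observes that the Q-values in those columns coincide with Dual-OHM's and inherits positivity; your Beta-integral evaluation $\lambda^\star_{N,j}=N\int_0^1 x(1-x)^{N-j-1}\,dx=\frac{N}{(N-j)(N-j+1)}$ is a clean self-contained alternative that also produces the explicit value. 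For $j\ge N'$ the paper evaluates the double sum directly, using Chu--Vandermonde to show the inner sum $\sum_{\ell}(-1)^{\ell}\binom{\ell+m}{m}\binom{N-j-1}{\ell-1}$ vanishes except at $m=N-j-1$; you instead factor out the column prefactors $c_j,c_{j+1}$, observe the residual double sum $S$ depends only on $N-j$, and determine $S=1$ by matching against OHM's certificates $\frac{j(j+1)}{N}$ obtained from the Lyapunov identity~\eqref{eqn:OHM-Lyapunov} together with the uniqueness of H-certificates. This transfer argument is valid (both ingredients are established in the paper) and avoids any binomial gymnastics, at the cost of leaning on the uniqueness result; the paper's direct evaluation is longer but self-contained. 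Both routes land on the same value $\frac{N(j-N'+1)(j-N'+2)}{(N-N'+1)^2}$.
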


As before, we provide the explicit expression for the associated $H$ below.

\begin{proposition}
\label{proposition:second-mixture-family-H-matrix}
The H-matrix corresponding to the Q-function profile \eqref{eqn:Q-formula-second-mixture-family} is given by
\begin{align}
\label{eqn:H-matrix-second-mixture-family}
    H = \begin{bmatrix}
        \begin{array}{c|cc}
             \left[ H_{\text{Dual-OHM}}(N-1) \right]_{1:N'-1,1:N'-1} \\
            \hline 
            \begin{array}{ccc} 
            -c_1 & \cdots & -c_{N'-1} \\
            \vdots & & \vdots \\
            -c_1 & \cdots & -c_{N'-1}
            \end{array} &  H_{\text{OHM}}(N-N')
        \end{array}
    \end{bmatrix} 
\end{align}
where $\left[ H_{\text{Dual-OHM}}(N-1) \right]_{1:N'-1,1:N'-1}$ denotes the upper left $(N'-1)\times (N'-1)$ submatrix of Dual-OHM's H-matrix (with $N-1$ iterations), and $h_{N',j} = \cdots = h_{N-1,j} = -c_j = -\frac{N-N'+1}{2(N-j)(N-j+1)}$ for $j=1,\dots,N'-1$.
In analytical form, 
\begin{align*}
    h_{k,j} = \begin{cases}
        \frac{N-k}{N-k+1} & \text{for } j = k \le N'-1 \\
        -\frac{N-k}{(N-j)(N-j+1)} & \text{for } j < k \le N'-1 \\
        \frac{k-N'+1}{k-N'+2} & \text{for } j = k \ge N' \\
        -\frac{j-N'+1}{(k-N'+1)(k-N'+2)} & \text{for } N' \le j < k \\
        -\frac{N-N'+1}{2(N-j)(N-j+1)} & \text{for }k \ge N', j \le N'-1 .
    \end{cases}
\end{align*}
\end{proposition}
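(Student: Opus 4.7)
The plan is to verify that the proposed H-matrix~\eqref{eqn:H-matrix-second-mixture-family} produces the Q-function values~\eqref{eqn:Q-formula-second-mixture-family}. Since the correspondence between Q-function profiles (with $Q(N-1,j,N-j)\neq 0$) and H-matrices with nonzero diagonals is bijective via the sequential inversion procedure discussed after \cref{proposition:OHM-Q-function-values}, this equivalence alone suffices.

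I would first dispose of the columns $j \ge N'$. For such $j$, only entries $h_{i,\ell}$ with $i,\ell \ge j \ge N'$ appear in the sum defining $Q(N-1,m,j)$, so only the lower-right $(N-N')\times(N-N')$ block of $H$ contributes. By inspection this block coincides, after the index shift $(k,j)\mapsto(k-N'+1,j-N'+1)$, with the H-matrix of OHM run for $N-N'$ iterations. Applying \cref{proposition:OHM-Q-function-values} with shifted parameters $(\tilde N,\tilde j)=(N-N'+1,j-N'+1)$ immediately yields $Q(N-1,m,j) = \frac{j-N'+1}{N-N'+1}\binom{N-j-1}{m-1}$, matching the claim.

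For columns $j \le N'-1$, I would induct on $m$ via the recursion of \cref{lemma:P_Q_identities},
\begin{align*}
    Q(N-1,m+1,j) \;=\; \sum_{\ell=j+1}^{N-1} Q(N-1,m,\ell)\,T_\ell, \qquad T_\ell := \sum_{i=j}^{\ell-1} h_{i,j}.
\end{align*}
The $Q(N-1,m,\ell)$ values for $\ell\ge N'$ are already established by the first step, and those for $\ell\in[j+1,N'-1]$ enter through the inductive hypothesis. The key preliminary is a closed form for $T_\ell$: splitting $T_\ell$ according to whether $i\le N'-1$ (upper-left Dual-OHM block) or $i\ge N'$ (constant-row lower-left block), and simplifying with the substitution $a=N-j$, $b=N-\ell$, gives
\begin{align*}
    T_\ell \;=\; \tfrac{1}{2} + \tfrac{(N-\ell)(N-\ell+1)}{2(N-j)(N-j+1)} \text{ for } \ell\le N', \qquad T_\ell \;=\; \tfrac{1}{2} + \tfrac{(N-\ell)(N-N'+1)}{2(N-j)(N-j+1)} \text{ for } \ell\ge N',
\end{align*}
with both expressions agreeing at $\ell=N'$. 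The base case $m=1$ is then immediate: $Q(N-1,1,j)=T_N=\tfrac{1}{2}$ matches $\tfrac{1}{2}\binom{N-j-1}{0}$.

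For the inductive step, substituting the $T_\ell$'s and the Q-values for $\ell>j$ into the right-hand side of the recursion and expanding produces a linear combination of $\binom{N-\ell-1}{m-1}$ with prescribed rational weights. Using the identity $(N-\ell)(N-\ell+1)\binom{N-\ell-1}{m-1}=m(m+1)\binom{N-\ell+1}{m+1}$, together with the hockey-stick identity $\sum_{k=a}^{b}\binom{k}{r}=\binom{b+1}{r+1}-\binom{a}{r+1}$ and the Chu--Vandermonde identity (\cref{lemma:chu-vandermonde}), the sum should collapse to the target $\frac{1}{m+2}\binom{N-j-1}{m}$. I expect the main obstacle to be the combinatorial bookkeeping: four partial sums arise after expansion (two from the upper-left block weighted by $\tfrac{1}{m+1}$, two from the lower-right block weighted by $\tfrac{\ell-N'+1}{N-N'+1}$), and the $N'$-dependent terms must be shown to cancel through careful tracking of the boundary contributions at $\ell=N'$, where the two pieces of $T_\ell$ meet, and at $\ell=N$, where $T_N=\tfrac{1}{2}$ terminates the column sum. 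As sanity checks, the formal limits $N'=1$ and $N'=N$ reduce the identity to the OHM (\cref{proposition:OHM-Q-function-values}) and Dual-OHM (\cref{proposition:Dual-OHM-is-that-algorithm-with-Qs}) cases, respectively.
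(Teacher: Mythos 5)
Your proposal is correct and follows essentially the same route as the paper: reduce to verifying the Q-profile via the bijection with H-matrices, dispose of columns $j\ge N'$ through the block structure and the index-shifted OHM result, and induct on $m$ through the recursion of \cref{lemma:P_Q_identities} for $j\le N'-1$ (your closed forms for $T_\ell$ and the base case $Q(N-1,1,j)=\tfrac12$ are correct). The only difference is that you keep the natural summation order and precompute the partial column sums $T_\ell$, whereas the paper swaps the order and precomputes the tail sums $\sum_{\ell>i}Q(N-1,m,\ell)$; both versions collapse via the same combinatorial lemmas, and the $N'$-dependent boundary terms do cancel exactly as you anticipate.
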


\vspace{.1in}
\noindent
\textbf{Remark.} The H-dual of \eqref{eqn:H-matrix-second-mixture-family} is:
\begin{align}
\label{eqn:H-matrix-Dual-OHM-then-OHM}
    H^\at = \begin{bmatrix}
        \begin{array}{c|cc}
             H_{\text{Dual-OHM}}(N-N') \\
            \hline 
            \begin{array}{ccc} 
            -c_{N'-1} & \cdots & -c_{N'-1} \\
            \vdots & & \vdots \\
            -c_1 & \cdots & -c_1
            \end{array} &  \left[H_{\text{OHM}}(N-1)\right]_{N-N'+1:N-1,N-N'+1:N-1}
        \end{array}
    \end{bmatrix} 
\end{align}
where $\left[H_{\text{OHM}}(N-1)\right]_{N-N'+1:N-1,N-N'+1:N-1}$ denotes the lower right $(N'-1)\times (N'-1)$ submatrix of OHM's H-matrix with $N-1$ iterations. 
Interestingly, this is the H-matrix of the anytime-beyond-$(N-N')$-steps optimal algorithm that runs Dual-OHM for $N-N'$ iterations and then proceeds with the OHM recursion from there on (we do not formally prove this fact, but it is not difficult to verify through direct calculations).
With this observation, one may be inclined to conjecture that the H-dual of any optimal H-matrix representable algorithm is also optimal---e.g., the H-invariance conditions are preserved under the H-dual operation because H-invariants remain unchanged under the anti-diagonal transpose \citep{YoonKimSuhRyu2024_optimal}.
However, we show in the following Section~\ref{subsection:non-dual-optimal-optimal-algorithm} that this is \textit{not} the case because nonnegativity of H-certificates is not necessarily preserved under the H-dual.

\subsection{An optimal algorithm whose H-dual is not optimal}
\label{subsection:non-dual-optimal-optimal-algorithm}

Based on the results developed earlier in this section, we can enumerate the algorithms and their associated sparsity patterns of $\lambda^\star$ corresponding to vertices of the 3D volume $\cH_\star$ shown in Figure~\ref{fig:3D-optimal-family}:

\begin{itemize}
    \item Vertex $A$: OHM with $N=4$, $\lambda^\star_{3,1} = \lambda^\star_{4,1} = \lambda^\star_{4,2} = 0$.
    \item Vertex $B$: Dual-OHM with $N=4$, $\lambda^\star_{2,1} = \lambda^\star_{3,1} = \lambda^\star_{3,2} = 0$.
    \item Vertex $C$: Algorithm of \cref{section:self-dual-optimal-algorithm} with $N=4, N'=2$ (which is self-H-dual), $\lambda^\star_{3,1} = \lambda^\star_{3,2} = \lambda^\star_{4,1} = 0$.
    \item Vertex $D$: Algorithm of \cref{section:second-mixture-optimal-algorithm} with $N=4, N'=2$, $\lambda^\star_{2,1} = \lambda^\star_{3,1} = \lambda^\star_{4,2} = 0$.
    \item Vertex $E$: Algorithm~\eqref{eqn:H-matrix-Dual-OHM-then-OHM} with $N=4, N'=2$ (Dual-OHM augmented by OHM), $\lambda^\star_{2,1} = \lambda^\star_{4,1} = \lambda^\star_{4,2} = 0$.
\end{itemize}
However, $\cH_\star$ still has one last vertex $F$ that is not a member of this list, which corresponds to the sparsity pattern 
\begin{align}
\label{eqn:non-dual-optimal-sparsity-pattern}
    \lambda^\star_{2,1} = \lambda^\star_{3,2} = \lambda^\star_{4,1} = 0 .
\end{align}
We explicitly compute the associated values of $Q(3,\cdot,\cdot)$ by solving \eqref{eqn:non-dual-optimal-sparsity-pattern} together with the H-invariance conditions, as
\begin{align*}
    Q(3,1,1) = \frac{5}{12}, \quad Q(3,1,2) = \frac{1}{2}, \quad Q(3,1,3) = \frac{7}{12}, \quad Q(3,2,1) = \frac{2}{3}, \quad Q(3,2,2) =\frac{1}{3}, \quad Q(3,3,1) = \frac{1}{4} 
\end{align*}
and identify the corresponding H-matrix as
\begin{align*}
    H_\text{strange} = \begin{bmatrix}
        \frac{3}{4} \\
        -\frac{1}{4} & \frac{4}{7} \\
        -\frac{1}{12} & -\frac{1}{14} & \frac{7}{12}
    \end{bmatrix} .
\end{align*}
We call this algorithm the \textit{strange 3-iterate algorithm}.
$H_\text{strange}$ is indeed optimal, as we can compute its H-certificates as
\begin{align*}
    & \lambda^\star_{4,3} = 4Q(3,1,3) = \frac{7}{3} \\
    & \lambda^\star_{4,2} = 4(Q(3, 1, 2) - Q(3, 2, 2)) = \frac{2}{3} \\
    & \lambda^\star_{3,1} = 4Q(3,1,3)(-2Q(3,1,1) + 3Q(3,2,1) - 4Q(3,3,1)) = \frac{7}{18} 
\end{align*}
and the remaining ones are $0$ by design \eqref{eqn:non-dual-optimal-sparsity-pattern}, which are all nonnegative.

Now consider its H-dual
\begin{align*}
    H_\text{strange}^\at = \begin{bmatrix}
        \frac{7}{12} \\
        -\frac{1}{14} & \frac{4}{7} \\
        -\frac{1}{12} & -\frac{1}{4} & \frac{3}{4}
    \end{bmatrix} .
\end{align*}
This matrix satisfies the H-invariance conditions, but its Q-functions $\Tilde{Q}(3,\cdot,\cdot) = Q\left(3,\cdot,\cdot; H_\text{strange}^\at \right)$ are as follows:
\begin{align*}
    \Tilde{Q}(3,1,1) = \frac{3}{7}, \quad \Tilde{Q}(3,1,2) = \frac{9}{28}, \quad \Tilde{Q}(3,1,3) = \frac{3}{4}, \quad \Tilde{Q}(3,2,1) = \frac{4}{7}, \quad \Tilde{Q}(3,2,2) =\frac{3}{7}, \quad \Tilde{Q}(3,3,1) = \frac{1}{4} .
\end{align*}
From this we can deduce that $H_\text{strange}^\at$ is not optimal---e.g., we have $\lambda^\star_{4,2}\left( H_\text{strange}^\at \right) = 4(\Tilde{Q}(3,1,2) - \Tilde{Q}(3,2,2)) = -\frac{3}{7} < 0$.

\section{Conclusion}
We provide a complete characterization of the minimax optimal fixed-point algorithms admitting H-matrix representations via H-invariants and H-certificates.
In our view, this translates  
the study of optimal optimization algorithms into a study of mathematical structures, providing a holistic understanding of them beyond analyzing each algorithm’s convergence in isolation.
For fixed-point algorithms, our study raises interesting questions including characterizing optimal algorithms that remain optimal under the H-dual, classifying all extremal elements---vertices---of the optimal H-matrix set, or identifying operations beyond the H-dual that induce interesting symmetry among optimal algorithms.

We believe similar lines of thought can be extended beyond the problem setup considered in this work.
For instance, one may broaden the algorithm class to include, e.g., an adaptive (yet still minimax optimal) variant of OHM proposed in \cite[Theorem~7.2]{SuhParkRyu2023_continuoustime}.
Nonsmooth convex minimization with bounded subgradients is another problem setup where exact minimax optimal algorithms are known to be non-unique \citep{Nesterov2004_introductory, DroriTeboulle2016_optimal, DroriTaylor2020_efficient}, and we expect an analogous characterization of optimal algorithms to be achievable there.
For smooth convex minimization, exact optimality is achieved by OGM \citep{KimFessler2016_optimized, Drori2017_exact}, and similar results are achieved under strong convexity by ITEM \citep{TaylorDrori2023_optimal} or for composite minimization with an additional proximable term by OptISTA \citep{JangGuptaRyu2025_computerassisted}.
To the best of our knowledge, it is unknown whether these algorithms are uniquely optimal for their respective problem classes---interestingly, for the case of OptISTA, it is reported in \cite{JangGuptaRyu2025_computerassisted} that numerical evidence suggests that the optimal algorithm is not unique. 
Careful consideration of worst-case instances, as in our work, may pave the way toward answering these questions.
For smooth minimax optimization problems, up-to-constant optimal acceleration of gradient-norm reduction has been achieved \citep{YoonRyu2021_accelerated, LeeKim2021_fast, Tran-DinhLuo2021_halperntype}, and although exact optimality is not yet established, there seems to be a strong connection between (proximal) fixed-point and minimax optimization algorithms \citep{yoonAcceleratedMinimaxAlgorithms2025a, MokhtariOzdaglarPattathil2020_unified}; hence, we anticipate that our theory may extend to that setting as well.
Finally, we raise the question of whether our theory can be combined with the recently proposed concept of dynamic optimality based on the notion of subgame perfect equilibrium, which is stronger than minimax optimality \citep{grimmerMinimaxOptimalitySubgame2026}.

\section*{Statements and declarations}

\paragraph{Funding. }
Ernest K.\ Ryu’s contribution to this work was supported by the Air Force Office of Scientific Research under award number FA95502510183.
Benjamin Grimmer's work was supported as a fellow of the Alfred P.\ Sloan Foundation.

\paragraph{Competing interests. }
The authors have no relevant financial or non-financial interests to disclose.

\paragraph{Availability of data and materials.}
No datasets were generated or analyzed during the current study.

\bibliographystyle{unsrtnat}
\bibliography{ref}

\appendix

\section{Useful combinatorial lemmas}
\label{section:combinatorial-lemmas}

\begin{lemma}[Chu-Vandermonde]
\label{lemma:chu-vandermonde}
For $a,b \in \reals$ and a nonnegative integer $c$, 
\begin{align*}
    \sum_{i=0}^c \binom{a}{i} \binom{b}{c-i} = \binom{a+b}{c} .
\end{align*}
\end{lemma}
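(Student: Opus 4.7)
The plan is a two-step argument: first establish the identity when $a$ and $b$ are nonnegative integers by a standard combinatorial count, then bootstrap to arbitrary real $a,b$ via a polynomial identity argument. The key enabling fact is that the generalized binomial coefficient $\binom{x}{k} = \frac{x(x-1)\cdots(x-k+1)}{k!}$ is, for each fixed nonnegative integer $k$, a polynomial in the real variable $x$ of degree exactly $k$.

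For the integer case (say $a=m$, $b=n$ with $m,n\in\mathbb{Z}_{\ge 0}$), I would use a bijective counting argument: $\binom{m+n}{c}$ counts the $c$-element subsets of a disjoint union $A\sqcup B$ with $|A|=m$, $|B|=n$. Partitioning these subsets by how many elements $i$ they contain from $A$ (so $c-i$ come from $B$), and using $\binom{m}{i}=0$ when $i>m$ (resp.\ $\binom{n}{c-i}=0$ when $c-i>n$) to justify the summation range $i=0,\dots,c$, yields $\binom{m+n}{c}=\sum_{i=0}^c \binom{m}{i}\binom{n}{c-i}$.

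To extend to real $a,b$, fix a nonnegative integer $c$ and view each side of the desired identity as a function of the two real variables $a,b$. The right-hand side $\binom{a+b}{c}$ is a polynomial in $a+b$ of degree $c$, hence a polynomial in $(a,b)$ of total degree $c$. Each summand $\binom{a}{i}\binom{b}{c-i}$ on the left is a product of polynomials of degrees $i$ and $c-i$, so the left-hand side is also a polynomial in $(a,b)$ of total degree at most $c$. First fix $b=n\in\mathbb{Z}_{\ge 0}$ arbitrary; both sides are then polynomials in $a$ of degree at most $c$ that agree on the infinite set $\mathbb{Z}_{\ge 0}$ by the integer case, so they coincide as polynomials in $a$, giving the identity for all real $a$ and all nonnegative integer $b$. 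Now fix $a\in\mathbb{R}$ arbitrary; both sides are polynomials in $b$ of degree at most $c$ that agree on $\mathbb{Z}_{\ge 0}$ by the previous step, so they coincide as polynomials in $b$, completing the proof.

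There is no real obstacle here; the only thing to be slightly careful about is verifying the degree bound on the left-hand side so that the polynomial-identity principle (a univariate polynomial of bounded degree with infinitely many roots is zero) applies at each of the two stages. I prefer this route over the generating-function proof using $(1+x)^a(1+x)^b=(1+x)^{a+b}$ because it avoids any discussion of convergence of the binomial series for real exponents.
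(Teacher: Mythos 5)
Your proposal is correct, but it takes a genuinely different route from the paper. The paper's proof is a one-line generating-function argument: both sides are the coefficient of $x^c$ in the product of the binomial series of $(1+x)^a$ and $(1+x)^b$, i.e.\ in $(1+x)^{a+b}$. You instead prove the identity combinatorially for nonnegative integers $a,b$ and then lift it to all real $a,b$ by the polynomial-identity principle, applied first in $a$ (with $b$ a nonnegative integer fixed) and then in $b$. Both arguments are sound. What your route buys is exactly what you note: it sidesteps any appeal to the binomial series for real exponents, whose use in the paper's proof implicitly relies on convergence for $|x|<1$ (or an equivalent analytic justification) together with uniqueness of Taylor coefficients, since treating $(1+x)^a$ purely as a formal series would essentially presuppose the identity being proved. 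The cost is length: the paper's argument is a single sentence, while yours needs the counting step plus two polynomial-extension stages. One small remark: the degree bound you are careful about is not actually needed---any two univariate polynomials (of whatever finite degree) agreeing on infinitely many points coincide---so that part of your caution can be dropped without harm.
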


\begin{proof}
Both sides represent the coefficient of $x^c$ in the product of the binomial series expansions of $(1+x)^a$ and $(1+x)^b$.
\end{proof}

\begin{lemma}
\label{lemma:DN-under-H-invariance}
Suppose $P(N-1,m) = \frac{1}{N} \binom{N}{m+1}$ for $m=0,\dots,N-1$. Then $D(N) = \frac{1}{N}$.
\end{lemma}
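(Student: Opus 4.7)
The plan is to substitute the H-invariance values directly into the definition of $D(N)$ and reduce the resulting sum to a standard alternating binomial identity, so essentially the entire argument is one line of calculation.

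First I would unfold the definition
\[
D(N) = \sum_{m=0}^{N-1} (-1)^m P(N-1,m; H)
\]
and substitute $P(N-1,m;H) = \tfrac{1}{N}\binom{N}{m+1}$ (valid for all $m=0,\dots,N-1$, since the $m=0$ case is the trivial identity $P(N-1,0;H)=1=\tfrac{1}{N}\binom{N}{1}$, and the remaining cases are the H-invariance hypothesis). Then I would perform the index shift $k=m+1$, which gives
\[
D(N) = \frac{1}{N}\sum_{k=1}^{N} (-1)^{k-1}\binom{N}{k}.
\]

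Next I would invoke the standard alternating binomial sum $\sum_{k=0}^{N}(-1)^k\binom{N}{k} = (1-1)^N = 0$, separate off the $k=0$ term, and conclude
\[
\sum_{k=1}^{N}(-1)^{k-1}\binom{N}{k} = -\sum_{k=1}^{N}(-1)^{k}\binom{N}{k} = \binom{N}{0} = 1,
\]
which yields $D(N) = \tfrac{1}{N}$, as claimed. There is no real obstacle here; the only thing to be careful about is the sign bookkeeping in the index shift and the treatment of the $m=0$ term, but both are routine.
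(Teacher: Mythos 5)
Your proposal is correct and follows essentially the same route as the paper's proof: both substitute the H-invariance values into the definition of $D(N)$ and reduce to the alternating binomial identity $\sum_{k=0}^{N}(-1)^k\binom{N}{k}=(1-1)^N=0$. The sign bookkeeping and treatment of the $m=0$ term in your argument are accurate.
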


\begin{proof}
Observe that $\sum_{m=0}^N (-1)^m \binom{N}{m} = (1-1)^N = 0$. Therefore,
\begin{align*}
    D(N) = \frac{1}{N} \sum_{m=0}^{N-1} (-1)^m \binom{N}{m+1} = \frac{1}{N} \left( 1 - \sum_{m=0}^N (-1)^m \binom{N}{m} \right) = \frac{1}{N} .
\end{align*}
\end{proof}

\begin{lemma}
\label{lemma:generalized-hockeystick}
Let $q,r$ be nonnegative integers and let $p$ be an integer satisfying $p \ge q+r$. Then 
\begin{align*}
    \sum_{j=r}^{p-q} \binom{p-j}{q} \binom{j}{r} = \binom{p+1}{q+r+1} .
\end{align*}
\end{lemma}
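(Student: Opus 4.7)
The plan is to prove this identity via a direct combinatorial double-counting, which is the cleanest route given the simple form of the right-hand side. Both sides will count the number of $(q+r+1)$-element subsets of $\{1, 2, \ldots, p+1\}$. The right-hand side does so by definition. For the left-hand side, I would partition such subsets according to the value $j+1$ of their $(r+1)$-st smallest element. Given this pivot, the remaining elements split into $r$ elements chosen from $\{1, \ldots, j\}$, contributing $\binom{j}{r}$, and $q$ elements chosen from $\{j+2, \ldots, p+1\}$, contributing $\binom{p-j}{q}$. The constraints $j \ge r$ (so that $r$ elements can fit below the pivot) and $j \le p-q$ (so that $q$ elements can fit above it) match exactly the summation range on the left, and the hypothesis $p \ge q+r$ is precisely what ensures this range is nonempty.

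An algebraic alternative, which fits the style of the rest of the appendix, is to derive the identity from Chu--Vandermonde (\cref{lemma:chu-vandermonde}) via the upper-negation trick $\binom{-n-1}{k} = (-1)^k \binom{n+k}{k}$. Specifically, I would substitute $a = -q-1$, $b = -r-1$, $c = p-q-r$ into \cref{lemma:chu-vandermonde}, apply upper-negation to all three binomial coefficients to cancel the global sign $(-1)^c$, and then perform the reindexing $j = r + c - i$ to recognize the left-hand side as the desired sum. This route is slightly longer but mirrors the algebraic manipulations used elsewhere in the paper.

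I do not anticipate any serious obstacle: the result is a textbook variant of the hockey-stick identity, and neither route requires case analysis or estimation. The only point requiring care is verifying that the summation bounds on both the combinatorial partition (or the reindexing in the algebraic approach) align precisely with $j \in \{r, \ldots, p-q\}$, which is routine.
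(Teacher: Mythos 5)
Your proposal is correct. Your second (algebraic) route is essentially identical to the paper's proof: the paper applies \cref{lemma:chu-vandermonde} with exactly the substitution $a=-q-1$, $b=-r-1$, $c=p-q-r$, uses the upper-negation identities $\binom{p-j}{q} = (-1)^{p-q-j}\binom{-q-1}{p-q-j}$ and $\binom{j}{r} = (-1)^{j-r}\binom{-r-1}{j-r}$, and reindexes via $j=p-q-i$, just as you describe. Your primary route — double-counting the $(q+r+1)$-subsets of $\{1,\dots,p+1\}$ by the value of their $(r+1)$-st smallest element — is a genuinely different and arguably cleaner argument: it avoids sign bookkeeping entirely, makes the summation bounds $r \le j \le p-q$ self-evident from the requirement that $r$ elements fit below the pivot and $q$ above it, and requires no appeal to the Chu–Vandermonde lemma. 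What the paper's algebraic route buys in exchange is stylistic uniformity: the appendix derives all of its combinatorial identities from the single \cref{lemma:chu-vandermonde} via upper negation, so the paper's choice keeps the toolbox minimal. Either proof is complete and correct as stated.
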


\begin{proof}
Note that $\binom{p-j}{q} = (-1)^{p-q-j}\binom{-q-1}{p-q-j}$ and $\binom{j}{r} = (-1)^{j-r} \binom{-r-1}{j-r}$.
Using \cref{lemma:chu-vandermonde} with $a=-q-1$, $b=-r-1$ and $c=p-q-r$ we have
\begin{align*}
    \sum_{i=0}^{p-q-r} \binom{-q-1}{i} \binom{-r-1}{p-q-r-i} = \binom{-q-r-2}{p-q-r} = (-1)^{p-q-r} \binom{p+1}{q+r+1} .
\end{align*}
The conclusion follows by making the index change $j=p-q-i$.
\end{proof}

\begin{lemma}
\label{lemma:simple-combinatorial-summations}
Let $q$ be a nonnegative integer. Then
\begin{enumerate}[label=(\alph*)]
    \item For any integers $p\ge s \ge q$, we have $\sum_{i=s}^p \binom{i}{q} = \binom{p+1}{q+1} - \binom{s}{q+1}$. In particular, $\sum_{i=q}^p \binom{i}{q} = \binom{p+1}{q+1}$.
    
    \item For any integers $p\ge q+1$ and $1\le s \le p-q$, 
    \begin{align*}
        \sum_{j=1}^s j\binom{p-j}{q} & = \binom{p+1}{q+2} - s \binom{p-s}{q+1} - \binom{p-s+1}{q+2} \\
        \sum_{j=s+1}^{p-q} j\binom{p-j}{q} & = s \binom{p-s}{q+1} + \binom{p-s+1}{q+2} .
    \end{align*}
    In particular, $\sum_{j=1}^{p-q} j\binom{p-j}{q} = \binom{p+1}{q+2}$.

    \item For any integer $p \ge q$, we have $\sum_{i=q}^p (i+1) \binom{i}{q} = (q+1) \binom{p+2}{q+2}$.
\end{enumerate}
\end{lemma}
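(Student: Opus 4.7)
The plan is to derive all three parts from Pascal's rule $\binom{n+1}{k} = \binom{n}{k} + \binom{n}{k-1}$ together with the elementary identity $(n+1)\binom{n}{k} = (k+1)\binom{n+1}{k+1}$, reusing earlier parts of the lemma and Lemma \ref{lemma:generalized-hockeystick} where convenient.

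For part (a), I would first establish the classical hockey-stick identity $\sum_{i=q}^{p} \binom{i}{q} = \binom{p+1}{q+1}$ by induction on $p \geq q$; the inductive step is immediate from Pascal's rule. The general statement then follows by writing $\sum_{i=s}^{p}\binom{i}{q} = \sum_{i=q}^{p}\binom{i}{q} - \sum_{i=q}^{s-1}\binom{i}{q}$.

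For part (b), the full-sum identity $\sum_{j=1}^{p-q} j\binom{p-j}{q} = \binom{p+1}{q+2}$ is exactly Lemma \ref{lemma:generalized-hockeystick} with $r=1$. For the partial sums the key manipulation is the decomposition
\begin{align*}
j\binom{p-j}{q} \;=\; (p+1)\binom{p-j}{q} - (p-j+1)\binom{p-j}{q} \;=\; (p+1)\binom{p-j}{q} - (q+1)\binom{p-j+1}{q+1},
\end{align*}
where the second equality uses the elementary identity above. Summing and invoking part (a) (after an index shift) on each piece yields
\begin{align*}
\sum_{j=1}^{s} j\binom{p-j}{q} = (p+1)\!\left[\binom{p}{q+1} - \binom{p-s}{q+1}\right] - (q+1)\!\left[\binom{p+1}{q+2} - \binom{p-s+1}{q+2}\right].
\end{align*}
Re-applying $(p+1)\binom{p}{q+1} = (q+2)\binom{p+1}{q+2}$ and $(q+2)\binom{p-s+1}{q+2} = (p-s+1)\binom{p-s}{q+1}$ collapses the boundary terms: the first pair contributes $\binom{p+1}{q+2}$, while the second pair contributes $-s\binom{p-s}{q+1} - \binom{p-s+1}{q+2}$. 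This gives the first claimed formula, and subtracting it from the full sum gives the second.

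For part (c), a single rewrite suffices: $(i+1)\binom{i}{q} = (q+1)\binom{i+1}{q+1}$, so after re-indexing,
\begin{align*}
\sum_{i=q}^{p}(i+1)\binom{i}{q} \;=\; (q+1)\sum_{j=q+1}^{p+1}\binom{j}{q+1} \;=\; (q+1)\binom{p+2}{q+2}
\end{align*}
by part (a). I do not anticipate a substantive obstacle, as every step reduces to a routine application of standard binomial identities; the only point requiring care is tracking the cancellation in part (b), where using $(n+1)\binom{n}{k} = (k+1)\binom{n+1}{k+1}$ in both directions is essential to get the boundary terms into the stated form.
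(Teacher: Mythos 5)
Your proof is correct. Parts (a) and (c) coincide with the paper's argument in substance: the paper also obtains the hockey-stick identity (via \cref{lemma:generalized-hockeystick} with $r=0$ rather than by induction on Pascal's rule, but this is immaterial) and proves (c) by the same absorption identity $(i+1)\binom{i}{q}=(q+1)\binom{i+1}{q+1}$. The genuinely different step is the partial sum in (b). The paper never decomposes the summand; instead it applies \cref{lemma:generalized-hockeystick} with $r=1$ a second time with $p$ replaced by $p-s$, re-indexes via $i=s+j$ to get $\sum_{i=s+1}^{p-q}(i-s)\binom{p-i}{q}=\binom{p-s+1}{q+2}$, and then writes $j=s+(j-s)$ to evaluate the tail $\sum_{j=s+1}^{p-q}j\binom{p-j}{q}$ directly, obtaining the head by subtraction from the full sum. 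You go the other way: you evaluate the head $\sum_{j=1}^{s}$ first via the decomposition $j\binom{p-j}{q}=(p+1)\binom{p-j}{q}-(q+1)\binom{p-j+1}{q+1}$, reduce each piece to part (a), and collapse the boundary terms with the absorption identity used in both directions; the tail then follows by subtraction. Both computations are valid and of comparable length; the paper's route reuses the same lemma instance twice and avoids the bookkeeping of cancellations, while yours is self-contained modulo part (a) and makes the telescoping structure of the summand explicit. The range conditions you need for part (a) in the intermediate sums (namely $p-s\ge q$) are guaranteed by the hypothesis $s\le p-q$, so there is no gap.
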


\begin{proof}
(a) Setting $r=0$ in \cref{lemma:generalized-hockeystick}, we have $\sum_{j=0}^{p-q} \binom{p-j}{q} = \sum_{i=q}^p \binom{i}{q} = \binom{p+1}{q+1}$ where we make the index change $i=p-j$. Replacing $p$ with $s-1$ gives $\sum_{i=q}^{s-1} \binom{i}{q} = \binom{s}{q+1}$, with the understanding that when $s=q$, the summation is vacuous and therefore is $0$. Subtracting the two identities yields $\sum_{i=s}^p \binom{i}{q} = \binom{p+1}{q+1} - \binom{s}{q+1}$.

\vspace{.1in}
\noindent
(b) Setting $r=1$ in \cref{lemma:generalized-hockeystick}, we have $\sum_{j=1}^{p-q} j\binom{p-j}{q} = \binom{p+1}{q+2}$.
Replacing $p$ with $p-s$ gives $\sum_{j=1}^{p-q-s} j \binom{p-s-j}{q} = \sum_{i=s+1}^{p-q} (i-s) \binom{p-i}{q} = \binom{p-s+1}{q+2}$ (the case $s=p-q$ yields vacuous summation $0$), where we substitute $i=s+j$.
This implies $\sum_{j=s+1}^{p-q} j \binom{p-j}{q} = s \sum_{j=s+1}^{p-q} \binom{p-j}{q} + \sum_{j=s+1}^{p-q} (j-s) \binom{p-j}{q} = s \sum_{i=q}^{p-s-1} \binom{i}{q} + \sum_{i=s+1}^{p-q} (i-s) \binom{p-i}{q} = s \binom{p-s}{q+1} + \binom{p-s+1}{q+2}$.
The first identity follows by subtracting this from $\sum_{j=1}^{p-q} j\binom{p-j}{q} = \binom{p+1}{q+2}$.

\vspace{.1in}
\noindent
(c) Use $(i+1)\binom{i}{q} = (i+1) \frac{i!}{q!(i-q)!} =\frac{(i+1)!}{q!(i-q)!} = (q+1) \frac{(i+1)!}{(q+1)!(i-q)!} = (q+1) \binom{i+1}{q+1}$ and apply (a) with $p \leftarrow p+1$, $q \leftarrow q+1$.
\end{proof}

\section{Proof of \cref{theorem:lambda-characterization}}
\label{section:lambda-characterization-proof}

We first establish the following handy algebraic identities.

\begin{lemma}
\label{lemma:P_Q_identities}
For $k=1,\dots,N-1$ and $m,j=1,\dots,k$, the following identities hold:
\begin{gather*}
    P(k,m) = \sum_{j=1}^k Q(k,m,j) \\
    Q(k,m+1,j) = \sum_{\ell=j+1}^{k} \left( Q(k,m,\ell) \sum_{i=j}^{\ell-1} h_{i,j} \right) = \sum_{\ell=j+1}^{k-m+1} \left( Q(k,m,\ell) \sum_{i=j}^{\ell-1} h_{i,j} \right) .
\end{gather*}    
\end{lemma}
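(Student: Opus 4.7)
The plan is to prove both identities by direct combinatorial partitioning of the index sets defining $P$ and $Q$. Neither requires induction or any nontrivial algebraic manipulation; the identities are essentially restatements of the definitions once the index sets are decomposed appropriately.

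For the first identity $P(k,m) = \sum_{j=1}^k Q(k,m,j)$, I would partition the summation range of $P(k,m)$ according to the value of the smallest column index $j(1) \in \{1, 2, \ldots, k\}$. By the defining formula for $Q(k,m,j)$, the sub-sum over tuples with $j(1) = j$ is exactly $Q(k,m,j)$, so summing these over $j$ recovers $P(k,m)$. Terms with $j > k-m+1$ contribute zero under the convention $Q(k,m,j) = 0$, which is consistent since the corresponding index set is then empty.

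For the recursive identity on $Q(k,m+1,j)$, the idea is to peel off the leading factor $h_{i(1), j}$ from each product and re-index. A tuple $(j(1), i(1), j(2), i(2), \ldots, j(m+1), i(m+1))$ contributing to $Q(k,m+1,j)$ satisfies $j(1) = j$, and the truncated tuple $(j(2), i(2), \ldots, j(m+1), i(m+1))$ is a valid contribution to $Q(k, m, \ell)$ with $\ell := j(2)$. The constraints $i(1) < j(2) = \ell$ and $i(1) \ge j(1) = j$ force $i(1) \in \{j, j+1, \ldots, \ell-1\}$, while the admissible $\ell$ values are $j+1, \ldots, k$. Grouping by $\ell$ first and then by $i(1)$ gives
\begin{equation*}
Q(k,m+1,j) = \sum_{\ell = j+1}^{k} \left( \sum_{i=j}^{\ell-1} h_{i,j} \right) Q(k,m,\ell).
\end{equation*}
The second, truncated form follows by noting $Q(k,m,\ell) = 0$ whenever $\ell > k-m+1$ (there is not enough room for $m$ strictly increasing blocks of indices in $\{\ell, \ldots, k\}$), so the upper summation bound may be reduced from $k$ to $k-m+1$ without changing the sum.

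The task is really only careful bookkeeping of the constraints $j(r) \le i(r) < j(r+1)$, so I do not anticipate any genuine obstacle. The lemma's role, I expect, is to provide the basic recursive arithmetic for $Q$ used later to verify that the explicit $\lambda^\star(H)$ proposed in \cref{theorem:lambda-characterization} solves the linear system $s(H, \lambda) = 0$; the harder work is deferred to that verification, not to this lemma.
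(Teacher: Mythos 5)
Your proposal is correct and follows essentially the same route as the paper's proof: partitioning the index set of $P(k,m)$ by the smallest column index $j(1)$ for the first identity, and for the second, grouping the tuples defining $Q(k,m+1,j)$ by $\ell = j(2)$ and then summing out $i(1) \in \{j,\dots,\ell-1\}$ to factor off $\sum_{i=j}^{\ell-1} h_{i,j}$, with the truncated upper limit justified by the vanishing convention for $Q$.
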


\begin{proof}

Define the sets of sequences of indices $I(k,m)$ and $I(k,m,j)$ by
\begin{align*}
    I(k,m) & = \left\{ (i(1), j(1), \dots, i(m), j(m)) \,|\, j(r) \le i(r) < j(r+1) \le i(r+1) \,\text{ for }\, r=1,\dots,m-1, \,\, j(m) \le i(m) \le k \right\} \\
    & \subset \{1,\dots,k\}^{2m}
\end{align*}
and
\begin{align*}
    I(k,m,j) & = \left\{ (i(1), j(1), \dots, i(m), j(m)) \in I(k,m) \,|\, j(1) = j \right\} \subset \{1,\dots,k\}^{2m} .
\end{align*}
Note that $I(k,m,j)$ is possibly empty (if $m+j>k+1$).
By definition, $I(k,m) = \coprod_{j=1}^{k} I(k,m,j)$ where $\coprod$ denotes the disjoint union.
Additionally, by definition we can write
\begin{align*}
    P(k,m) & = \sum_{(i(1), j(1), \dots, i(m), j(m)) \in I(k,m)} \prod_{r=1}^m h_{i(r),j(r)} \\
    Q(k,m,j) & = \sum_{(i(1), j(1), \dots, i(m), j(m)) \in I(k,m,j)} \prod_{r=1}^m h_{i(r),j(r)}
\end{align*}
where any vacuous summation occurring in the case $I(k,m,j) = \emptyset$ is treated as zero.
The first identity
\begin{align*}
    P(k,m) = \sum_{j=1}^k Q(k,m,j) 
\end{align*}
follows directly from the fact that $I(k,m)$ is the disjoint union of $I(k,m,j)$ for $j=1,\dots,k$.

Next, observe that
\begin{align*}
    Q(k,m+1,j) & = \sum_{(i(1), j(1), \dots, i(m+1), j(m+1)) \in I(k,m+1,j)} \prod_{r=1}^{m+1} h_{i(r),j(r)} \\
    & = \sum_{\ell=j+1}^{k} \sum_{\substack{(i(1), j(1), \dots, i(m+1), j(m+1)) \in I(k,m+1,j)\\j(2)=\ell}} \prod_{r=1}^{m+1} h_{i(r),j(r)} \\
    & = \sum_{\ell=j+1}^{k} \sum_{i=j}^{\ell-1} \sum_{\substack{(i(1), j(1), \dots, i(m+1), j(m+1)) \in I(k,m+1,j)\\i(1)=i, \,\, j(2)=\ell}} \prod_{r=1}^{m+1} h_{i(r),j(r)} \\
    & = \sum_{\ell=j+1}^{k} \sum_{i=j}^{\ell-1} \sum_{\substack{(i,j, i(2), j(2), \dots, i(m+1), j(m+1)) \in I(k,m+1,j)\\j(2)=\ell}} h_{i,j} \prod_{r=2}^{m+1} h_{i(r),j(r)} \\
    & = \sum_{\ell=j+1}^{k} \left( \sum_{i=j}^{\ell-1} h_{i,j} \right) \sum_{\substack{(i(2), j(2), \dots, i(m+1), j(m+1)) \in I(k,m,\ell)}} \prod_{r=2}^{m+1} h_{i(r),j(r)}
\end{align*}
where the last line follows from the fact that the inner summation is independent of $(i(1),j(1)) = (i,j)$, and
\[
    (i,j, i(2), j(2), \dots, i(m+1), j(m+1)) \in I(k,m+1,j) \,\text{ with }\, j(2) = \ell
\]
holds if and only if 
\begin{align*}
    & \ell = j(2) \le i(2) < j(3) \le \cdots < j(m+1) \le i(m+1) \le k \\
    & \!\! \iff (i(2), j(2), \dots, i(m+1), j(m+1)) \in I(k,m,\ell) .
\end{align*}
Finally, by definition of $Q(k,m,\ell)$ we conclude that
\begin{align*}
    Q(k,m+1,j) = \sum_{\ell=j+1}^{k} \left( \sum_{i=j}^{\ell-1} h_{i,j} \right) Q(k,m,\ell) , 
\end{align*}
which proves the second identity.
\end{proof}

\begin{proof}[Proof of \cref{theorem:lambda-characterization}]

We first show that the proposed expressions for $\lambda^\star$ satisfy $s(\lambda^\star) = 0$ by plugging $\lambda^\star$ into each line of \cref{lemma:sNj-characterization} and verifying that it simplifies to zero.

\vspace{.2cm}
\noindent
\textbf{Part 1.}
$s_{N,N}(\lambda^\star) = 0$.
For this, observe that
\begin{align*}
    \sum_{j=1}^{N-1} \lambda_{N,j}^\star & = \frac{1}{D(N)} \sum_{j=1}^{N-1} \sum_{m=1}^{N-1} (-1)^{m-1} Q(N-1,m,j) = \frac{1}{D(N)} \sum_{m=1}^{N-1} (-1)^{m-1} \sum_{j=1}^{N-1} Q(N-1,m,j) \\
    & = \frac{1}{D(N)} \sum_{m=1}^{N-1} (-1)^{m-1} P(N-1,m) = \frac{1 - D(N)}{D(N)} = N - 1 .
\end{align*}
where we use the first identity from \cref{lemma:P_Q_identities} and \cref{lemma:DN-under-H-invariance} ($D(N) = \frac{1}{N}$).
This shows $s_{N,N}(\lambda^\star) = N - 1 - \sum_{j=1}^{N-1} \lambda_{N,j}^\star = 0$. 
\vspace{.2cm}

\noindent
\textbf{Part 2.}
$s_{N,j}(\lambda^\star) = 0$ for $j=1,\dots,N-1$.
For any fixed $j \in \{1,\dots,N-1\}$, we have
\begin{align*}
    \lambda_{N,j}^\star - \sum_{i=j}^{N-1} \sum_{k=1}^{i} h_{i,j} \lambda_{N,k}^\star & = \lambda_{N,j}^\star - \sum_{k=1}^{N-1} \sum_{i=\max\{k,j\}}^{N-1} h_{i,j} \lambda_{N,k}^\star \\
    & = \lambda_{N,j}^\star - \sum_{k=1}^{j} \sum_{i=j}^{N-1} h_{i,j} \lambda_{N,k}^\star - \sum_{k=j+1}^{N-1} \sum_{i=k}^{N-1} h_{i,j} \lambda_{N,k}^\star \\
    & = \lambda_{N,j}^\star - \sum_{k=1}^{j} \sum_{i=j}^{N-1} h_{i,j} \lambda_{N,k}^\star - \sum_{k=j+1}^{N-1} \sum_{i=j}^{N-1} h_{i,j} \lambda_{N,k}^\star - \sum_{k=j+1}^{N-1} \left( \sum_{i=k}^{N-1} h_{i,j} - \sum_{i=j}^{N-1} h_{i,j} \right) \lambda_{N,k}^\star \\
    & = \lambda_{N,j}^\star - \sum_{k=1}^{N-1} \sum_{i=j}^{N-1} h_{i,j} \lambda_{N,k}^\star + \sum_{k=j+1}^{N-1} \sum_{i=j}^{k-1} h_{i,j} \lambda_{N,k}^\star \\
    & = \lambda_{N,j}^\star - \sum_{k=1}^{N-1} \sum_{i=j}^{N-1} h_{i,j} \lambda_{N,k}^\star + \sum_{\ell=j+1}^{N-1} \sum_{i=j}^{\ell-1} h_{i,j} \lambda_{N,\ell}^\star \\
    & = \frac{1}{D(N)} \left[ \sum_{m=1}^{N-1} (-1)^{m-1} Q(N-1,m,j) - \left(\sum_{i=j}^{N-1} h_{i,j} \right) \sum_{k=1}^{N-1} \sum_{m=1}^{N-1} (-1)^{m-1} Q(N-1,m,k) \right. \\
    & \qquad\qquad + \left. \sum_{\ell=j+1}^{N-1} \sum_{i=j}^{\ell-1} h_{i,j} \sum_{m=1}^{N-1} (-1)^{m-1} Q(N-1,m,\ell) \right] \\
    & \stackrel{(*)}{=} \frac{1}{D(N)} \left[ \sum_{m=1}^{N-1} (-1)^{m-1} Q(N-1,m,j) - \left(\sum_{i=j}^{N-1} h_{i,j} \right) \sum_{m=1}^{N-1} (-1)^{m-1} P(N-1,m) \right. \\
    & \qquad\qquad + \left. \sum_{m=1}^{N-1} (-1)^{m-1} Q(N-1,m+1,j) \right] \\
    & = \frac{1}{D(N)} \left[ Q(N-1,1,j) + \left(\sum_{i=j}^{N-1} h_{i,j} \right) (D(N) - 1) \right] \\
    & = \sum_{i=j}^{N-1} h_{i,j} 
\end{align*}
where $(*)$ uses \cref{lemma:P_Q_identities}, and the last equality uses $Q(N-1,1,j) = \sum_{i=j}^{N-1} h_{i,j}$, which holds by definition.
This proves
\begin{align*}
    s_{N,j} (\lambda^\star) = 2 \left( \lambda_{N,j}^\star - \sum_{i=j}^{N-1} \sum_{k=1}^{i} h_{i,j} \lambda_{N,k}^\star - \sum_{i=j}^{N-1} h_{i,j} \right) = 0 .
\end{align*}

\vspace{.2cm}
\noindent
\textbf{Part 3.}
$s_{k,k}(\lambda^\star) = 0$ for $k=1,\dots,N-1$.
To show this, first observe that 
\begin{align}
    \sum_{j=1}^{k-1} \lambda_{k,j}^\star + \sum_{i=k+1}^{N} \lambda_{i,k}^\star & = \sum_{i=1}^{k-1} \lambda_{k,i}^\star + \sum_{i=k+1}^{N-1} \lambda_{i,k}^\star + \lambda_{N,k}^\star \nonumber \\
    & \begin{aligned}
        & = \frac{1}{D(N)} \sum_{i=1}^{k-1} \sum_{\ell=1}^{N-1} \sum_{m=1}^{N-1} (-1)^{\ell+m-1} \binom{\ell+m}{m} Q(N-1,\ell,i) Q(N-1,m,k) \\
        & \quad + \frac{1}{D(N)} \sum_{i=k+1}^{N-1} \sum_{m=1}^{N-1} \sum_{\ell=1}^{N-1} (-1)^{\ell+m-1} \binom{\ell+m}{\ell} Q(N-1,m,k) Q(N-1,\ell,i) \\
        & \quad + \frac{1}{D(N)} \sum_{m=1}^{N-1} (-1)^{m-1} Q(N-1,m,k) .
    \end{aligned}
    \label{eqn:lambda-theorem-skk-expansion}
\end{align}
Within the first and second summations of \eqref{eqn:lambda-theorem-skk-expansion}, the binomial coefficients $\binom{\ell+m}{m}$ and $\binom{\ell+m}{\ell}$ are the same, and $Q(N-1,m,k)$ appears as a common factor in the summand. Therefore, we can merge these summations to obtain
\begin{align}
    & \sum_{j=1}^{k-1} \lambda_{k,j}^\star + \sum_{i=k+1}^{N} \lambda_{i,k}^\star \nonumber \\
    & = \frac{1}{D(N)} \sum_{\ell=1}^{N-1} \sum_{m=1}^{N-1} (-1)^{\ell+m-1} \binom{\ell+m}{m} Q(N-1,m,k) \left( \sum_{i=1}^{k-1} Q(N-1,\ell,i) + \sum_{i=k+1}^{N-1} Q(N-1,\ell,i) \right) \nonumber \\
    & \quad + \frac{1}{D(N)} \sum_{m=1}^{N-1} (-1)^{m-1} Q(N-1,m,k) \nonumber \\
    & \begin{aligned}
        & = \frac{1}{D(N)} \sum_{\ell=1}^{N-1} \sum_{m=1}^{N-1} (-1)^{\ell+m-1} \binom{\ell+m}{m} Q(N-1,m,k) \left( P(N-1,\ell) - Q(N-1,\ell,k) \right) \\
        & \quad + \frac{1}{D(N)} \sum_{m=1}^{N-1} (-1)^{m-1} Q(N-1,m,k)
    \end{aligned}
    \label{eqn:lambda-theorem-skk-expansion-second}
\end{align}
where we use $P(N-1,\ell) = \sum_{i=1}^{N-1} Q(N-1,\ell,i)$ (from \cref{lemma:P_Q_identities}) for the last equality.
Now, we plug in the identities $P(N-1,\ell) = \frac{1}{N} \binom{N}{\ell+1}$ for $\ell=1,\dots,N-1$ to \eqref{eqn:lambda-theorem-skk-expansion-second}, which gives
\begin{align}
    & \sum_{j=1}^{k-1} \lambda_{k,j}^\star + \sum_{i=k+1}^{N} \lambda_{i,k}^\star \nonumber \\
    & = \frac{1}{D(N)} \sum_{\ell=1}^{N-1} \sum_{m=1}^{N-1} (-1)^{\ell+m-1} \frac{1}{N} \binom{\ell+m}{m} \binom{N}{\ell+1} Q(N-1,m,k) + \frac{1}{D(N)} \sum_{m=1}^{N-1} (-1)^{m-1} Q(N-1,m,k) \nonumber \\
    & \quad - \frac{1}{D(N)} \sum_{\ell=1}^{N-1} \sum_{m=1}^{N-1} (-1)^{\ell+m-1} \binom{\ell+m}{m} Q(N-1,m,k) Q(N-1,\ell,k) \nonumber \\
    & \stackrel{(**)}{=} \frac{1}{D(N)} \sum_{m=1}^{N-1} \frac{Q(N-1,m,k)}{N} \underbrace{\sum_{\ell=0}^{N-1} (-1)^{\ell+m-1} \binom{\ell+m}{m} \binom{N}{\ell+1}}_{=0} \nonumber \\
    & \quad + \frac{1}{D(N)} \sum_{\ell=1}^{N-1} \sum_{m=1}^{N-1} (-1)^{\ell+m} \binom{\ell+m}{m} Q(N-1,m,k) Q(N-1,\ell,k) \label{eqn:lambda-theorem-skk-lambda-sum-before-expansion} \\
    & = \frac{1}{D(N)} \left[ \sum_{\ell=1}^{N-1} \binom{2\ell}{\ell} Q(N-1,\ell,k)^2 + 2 \sum_{1\le\ell<m\le N-1} (-1)^{\ell+m} \binom{\ell+m}{\ell} Q(N-1,m,k) Q(N-1,\ell,k) \right] . \label{eqn:lambda-theorem-skk-lambda-sum}
\end{align}
For $(**)$ we merge the summation $\frac{1}{D(N)} \sum_{m=1}^{N-1} (-1)^{m-1} Q(N-1,m,k)$ into the double summation as the case $\ell=0$, and then $\sum_{\ell=0}^{N-1} (-1)^{\ell+m-1} \binom{\ell+m}{m} \binom{N}{\ell+1} = 0$ follows from plugging $a=-m-1$, $b=N$, $c=N-1$ into \cref{lemma:chu-vandermonde} and using $\binom{-m-1}{\ell} = (-1)^\ell \binom{\ell+m}{\ell} = (-1)^\ell \binom{\ell+m}{m}$ and $\binom{N-m-1}{N-1} = 0$ for $m=1,2,\dots$.
Next, we compute the following:
\begin{align}
    & \sum_{i=k+1}^N \left( \sum_{j=k}^{i-1} h_{j,k} \right) \lambda_{i,k}^\star \nonumber \\
    & = \sum_{i=k+1}^{N-1} \left( \sum_{j=k}^{i-1} h_{j,k} \right) \lambda_{i,k}^\star + \left( \sum_{j=k}^{N-1} h_{j,k} \right) \lambda_{N,k}^\star \nonumber \\
    & = \frac{1}{D(N)} \sum_{\ell=1}^{N-1} \sum_{m=1}^{N-1} (-1)^{\ell+m-1} \binom{\ell+m}{m} Q(N-1,\ell,k) \left[ \sum_{i=k+1}^{N-1} \left(\sum_{j=k}^{i-1} h_{j,k}\right) Q(N-1,m,i) \right] \nonumber \\
    & \quad + \frac{1}{D(N)} \left( \sum_{j=k}^{N-1} h_{j,k} \right) \sum_{\ell=1}^{N-1} (-1)^{\ell-1} Q(N-1,\ell,k) \nonumber \\
    & \begin{aligned}
        & = \frac{1}{D(N)} \sum_{\ell=1}^{N-1} \sum_{m=1}^{N-1} (-1)^{\ell+m-1} \binom{\ell+m}{m} Q(N-1,\ell,k) Q(N-1,m+1,k) \\
        & \quad + \frac{1}{D(N)} Q(N-1,1,k) \sum_{\ell=1}^{N-1} (-1)^{\ell-1} Q(N-1,\ell,k) .
    \end{aligned}
    \label{eqn:lambda-theorem-skk-expansion-third}
\end{align}
Here, for the last equality, we use the second identity of \cref{lemma:P_Q_identities} and $Q(N-1,1,k) = \sum_{j=k}^{N-1} h_{j,k}$.
Now note that the second summation in \eqref{eqn:lambda-theorem-skk-expansion-third} can be merged with the first summation as the case $m=0$, which gives
\begin{align}
    & \sum_{i=k+1}^N \left( \sum_{j=k}^{i-1} h_{j,k} \right) \lambda_{i,k}^\star \nonumber \\
    & = \frac{1}{D(N)} \sum_{\ell=1}^{N-1} \sum_{m=0}^{N-1} (-1)^{\ell+m-1} \binom{\ell+m}{m} Q(N-1,\ell,k) Q(N-1,m+1,k) \nonumber \\
    & = \frac{1}{D(N)} \sum_{\ell=1}^{N-1} \sum_{m=1}^{N-1} (-1)^{\ell+m} \binom{\ell+m-1}{m-1} Q(N-1,\ell,k) Q(N-1,m,k) \nonumber \\
    & = \frac{1}{D(N)} \left[ \sum_{\ell=1}^{N-1} \binom{2\ell - 1}{\ell} Q(N-1,\ell,k)^2 + \sum_{1\le\ell<m\le N-1} (-1)^{\ell+m} \left( \binom{\ell+m-1}{m-1} + \binom{\ell+m-1}{\ell-1} \right) Q(N-1,m,k) Q(N-1,\ell,k) \right] \nonumber \\
    & = \frac{1}{D(N)} \left[ \sum_{\ell=1}^{N-1} \binom{2\ell - 1}{\ell} Q(N-1,\ell,k)^2 + \sum_{1\le\ell<m\le N-1} (-1)^{\ell+m} \binom{\ell+m}{\ell} Q(N-1,m,k) Q(N-1,\ell,k) \right] . \label{eqn:lambda-theorem-skk-lambda-h-product-sum}
\end{align}
The third equality follows by shifting the index $m$ (replace $m$ by $m-1$) with the understanding that $Q(N-1,m+1,k) = 0$ for $m=N-1$.
The last equality uses the identity $\binom{\ell+m-1}{m-1} + \binom{\ell+m-1}{\ell-1} = \binom{\ell+m-1}{\ell} + \binom{\ell+m-1}{\ell-1} = \binom{\ell+m}{\ell}$.
Now noting that $\binom{2\ell}{\ell} = \frac{(2\ell)!}{\ell!\ell!} = 2\frac{(2\ell-1)!}{\ell!(\ell-1)!} = 2\binom{2\ell-1}{\ell}$ and comparing \eqref{eqn:lambda-theorem-skk-lambda-sum} with \eqref{eqn:lambda-theorem-skk-lambda-h-product-sum}, we obtain
\begin{align*}
    & \sum_{j=1}^{k-1} \lambda_{k,j}^\star + \sum_{i=k+1}^{N} \lambda_{i,k}^\star = 2\sum_{i=k+1}^N \left( \sum_{j=k}^{i-1} h_{j,k} \right) \lambda_{i,k}^\star \\
    & \iff s_{k,k}(\lambda^\star) = \sum_{i=k+1}^N \left( \sum_{j=k}^{i-1} 2 h_{j,k} - 1 \right) \lambda_{i,k}^\star - \sum_{j=1}^{k-1} \lambda_{k,j}^\star = 0
\end{align*}
as desired.

\vspace{.2cm}
\noindent
\textbf{Part 4.}
$s_{k,j}(\lambda^\star) = 0$ for $k=1,\dots,N-1$ and $j=1,\dots,k-1$. In full detail, this is equivalent to
\begin{align}
\label{eqn:skj-is-0}
    \lambda_{k,j}^\star - \sum_{n=j}^{k-1} h_{n,j} \sum_{i=1}^n \lambda_{k,i}^\star + \sum_{m=k+1}^N \sum_{n=k}^{m-1} \left( h_{n,j} \lambda_{m,k}^\star + h_{n,k} \lambda_{m,j}^\star \right) = 0 .
\end{align}
We first simplify the last summation $\sum_{m=k+1}^N \sum_{n=k}^{m-1} h_{n,k} \lambda_{m,j}^\star$ as follows:
\begin{align}
    & \sum_{m=k+1}^N \sum_{n=k}^{m-1} h_{n,k} \lambda_{m,j}^\star \nonumber \\
    & = \sum_{m=k+1}^{N-1} \sum_{n=k}^{m-1} h_{n,k} \lambda_{m,j}^\star + \sum_{n=k}^{N-1} h_{n,k} \lambda_{N,j}^\star \nonumber \\
    & = \frac{1}{D(N)} \sum_{m=k+1}^{N-1} \sum_{n=k}^{m-1} h_{n,k} \sum_{\ell=1}^{N-1} \sum_{i=1}^{N-1} (-1)^{\ell+i-1} \binom{\ell+i}{i} Q(N-1,\ell,j) Q(N-1,i,m) \nonumber \\
    & \quad + \frac{1}{D(N)} \sum_{n=k}^{N-1} h_{n,k} \sum_{\ell=1}^{N-1} (-1)^{\ell-1} Q(N-1,\ell,j) \nonumber \\
    & = \frac{1}{D(N)} \sum_{\ell=1}^{N-1} \sum_{i=1}^{N-1} (-1)^{\ell+i-1} \binom{\ell+i}{i} Q(N-1,\ell,j) \sum_{m=k+1}^{N-1} Q(N-1,i,m) \sum_{n=k}^{m-1} h_{n,k} \nonumber \\
    & \quad + \frac{1}{D(N)} Q(N-1,1,k) \sum_{\ell=1}^{N-1} (-1)^{\ell-1} Q(N-1,\ell,j) \nonumber \\
    & = \frac{1}{D(N)} \sum_{\ell=1}^{N-1} \sum_{i=1}^{N-1} (-1)^{\ell+i-1} \binom{\ell+i}{i} Q(N-1,\ell,j) Q(N-1,i+1,k) \nonumber \\
    & \quad + \frac{1}{D(N)} \sum_{\ell=1}^{N-1} (-1)^{\ell-1} Q(N-1,\ell,j) Q(N-1,1,k) \nonumber \\
    & = \frac{1}{D(N)} \sum_{\ell=1}^{N-1} \sum_{i=0}^{N-1} (-1)^{\ell+i-1} \binom{\ell+i}{i} Q(N-1,\ell,j) Q(N-1,i+1,k) \nonumber \\
    & = \frac{1}{D(N)} \sum_{\ell=1}^{N-1} \sum_{i=1}^{N-1} (-1)^{\ell+i} \binom{\ell+i-1}{i-1} Q(N-1,\ell,j) Q(N-1,i,k)
    \label{eqn:lambda-theorem-skj-last-term-expansion}
\end{align}
where the fourth equality uses \cref{lemma:P_Q_identities} and the last line shifts index.
Next, we handle 
\begin{align}
    -\sum_{n=j}^{k-1} h_{n,j} \sum_{i=1}^n \lambda_{k,i}^\star & = -\sum_{n=j}^{k-1} h_{n,j} \sum_{i=1}^{k-1} \lambda_{k,i}^\star + \sum_{n=j}^{k-1} h_{n,j} \sum_{i=n+1}^{k-1} \lambda_{k,i}^\star = \sum_{n=j}^{k-1} h_{n,j} \left( -\sum_{i=1}^{k-1} \lambda_{k,i}^\star \right) + \sum_{i=j+1}^{k-1} \lambda_{k,i}^\star \sum_{n=j}^{i-1} h_{n,j} 
    \label{eqn:lambda-theorem-skj-second-term-expansion}
\end{align}
where we switch the order in the last summation.
Now, we can rewrite
\begin{align*}
    -\sum_{i=1}^{k-1} \lambda_{k,i}^\star & = \sum_{i=k+1}^N \lambda_{i,k}^\star - \left( \sum_{i=1}^{k-1} \lambda_{k,i}^\star + \sum_{i=k+1}^N \lambda_{i,k}^\star \right) \\
    & = \sum_{m=k+1}^N \lambda_{m,k}^\star - \frac{1}{D(N)} \sum_{\ell=1}^{N-1} \sum_{m=1}^{N-1} (-1)^{\ell+m} \binom{\ell+m}{m} Q(N-1,m,k) Q(N-1,\ell,k)
\end{align*}
where we change the first summation index from $i$ to $m$ and use \eqref{eqn:lambda-theorem-skk-lambda-sum-before-expansion}.
Plugging this back into \eqref{eqn:lambda-theorem-skj-second-term-expansion} we obtain
\begin{align}
    & -\sum_{n=j}^{k-1} h_{n,j} \sum_{i=1}^n \lambda_{k,i}^\star \nonumber \\
    & \begin{aligned}
        & = \underbrace{\left( \sum_{n=j}^{k-1} h_{n,j} \right) \left( \sum_{m=k+1}^N \lambda_{m,k}^\star \right)}_{\mathrm{(I)}} - \underbrace{\frac{1}{D(N)} \sum_{\ell=1}^{N-1} \sum_{m=1}^{N-1} (-1)^{\ell+m} \binom{\ell+m}{m} Q(N-1,m,k) Q(N-1,\ell,k) \sum_{n=j}^{k-1} h_{n,j}}_{\mathrm{(II)}} \\
        & \quad + \underbrace{\frac{1}{D(N)} \sum_{\ell=1}^{N-1} \sum_{m=1}^{N-1} (-1)^{\ell+m-1} \binom{\ell+m}{m} Q(N-1,m,k) \sum_{i=j+1}^{k-1} Q(N-1,\ell,i) \sum_{n=j}^{i-1} h_{n,j}}_{\mathrm{(III)}}
    \end{aligned}
    \label{eqn:lambda-theorem-skj-second-term-expansion-final}
\end{align}
where $\mathrm{(III)}$ arises from plugging in the expression for $\lambda_{k,i}^\star$ into $\sum_{i=j+1}^{k-1} \lambda_{k,i}^\star \sum_{n=j}^{i-1} h_{n,j}$.
Now, we combine \eqref{eqn:lambda-theorem-skj-second-term-expansion-final} with another summation $\sum_{m=k+1}^N \sum_{n=k}^{m-1} h_{n,j} \lambda_{m,k}^\star$ that appears in $s_{k,j}(\lambda^\star)$ to obtain
\begin{align*}
    & -\sum_{n=j}^{k-1} h_{n,j} \sum_{i=1}^n \lambda_{k,i}^\star + \sum_{m=k+1}^N \sum_{n=k}^{m-1} h_{n,j} \lambda_{m,k}^\star \\
    & = \mathrm{(I) - (II) + (III)} + \sum_{m=k+1}^N \sum_{n=k}^{m-1} h_{n,j} \lambda_{m,k}^\star \\
    & = \sum_{m=k+1}^N \sum_{n=j}^{k-1} h_{n,j} \lambda_{m,k}^\star - \mathrm{(II) + (III)} + \sum_{m=k+1}^N \sum_{n=k}^{m-1} h_{n,j} \lambda_{m,k}^\star \\
    & = \mathrm{-(II) + (III)} + \sum_{m=k+1}^N \sum_{n=j}^{m-1} h_{n,j} \lambda_{m,k}^\star \\
    & = \frac{1}{D(N)} \sum_{\ell=1}^{N-1} \sum_{m=1}^{N-1} (-1)^{\ell+m-1} \binom{\ell+m}{m} Q(N-1,m,k) Q(N-1,\ell,k) \sum_{n=j}^{k-1} h_{n,j} \\
    & \quad + \frac{1}{D(N)} \sum_{\ell=1}^{N-1} \sum_{m=1}^{N-1} (-1)^{\ell+m-1} \binom{\ell+m}{m} Q(N-1,m,k) \sum_{i=j+1}^{k-1} Q(N-1,\ell,i) \sum_{n=j}^{i-1} h_{n,j} \\
    & \quad + \frac{1}{D(N)} \sum_{\ell=1}^{N-1} \sum_{i=1}^{N-1} (-1)^{\ell+i-1} \binom{\ell+i}{i} Q(N-1,i,k) \sum_{m=k+1}^{N-1} Q(N-1,\ell,m) \sum_{n=j}^{m-1} h_{n,j} + \sum_{n=j}^{N-1} h_{n,j} \lambda_{N,k}^\star
\end{align*}
where the very last line is the result of plugging in the expression for $\lambda_{m,k}^\star$ for $m=k+1,\dots,N-1$.
Now switching the roles of indices $i$ and $m$ in the last line, we can merge all double summations to obtain
\begin{align*}
    & -\sum_{n=j}^{k-1} h_{n,j} \sum_{i=1}^n \lambda_{k,i}^\star + \sum_{m=k+1}^N \sum_{n=k}^{m-1} h_{n,j} \lambda_{m,k}^\star \\
    & = \frac{1}{D(N)} \sum_{\ell=1}^{N-1} \sum_{m=1}^{N-1} (-1)^{\ell+m-1} \binom{\ell+m}{m} Q(N-1,m,k) \\
    & \qquad \qquad \qquad \qquad \cdot \left[ Q(N-1,\ell,k) \sum_{n=j}^{k-1} h_{n,j} + \sum_{i=j+1}^{k-1} Q(N-1,\ell,i) \sum_{n=j}^{i-1} h_{n,j} + \sum_{i=k+1}^{N-1} Q(N-1,\ell,i) \sum_{n=j}^{i-1} h_{n,j} \right] \\
    & \quad + \sum_{n=j}^{N-1} h_{n,j} \lambda_{N,k}^\star \\
    & = \frac{1}{D(N)} \sum_{\ell=1}^{N-1} \sum_{m=1}^{N-1} (-1)^{\ell+m-1} \binom{\ell+m}{m} Q(N-1,m,k) \sum_{i=j+1}^{N-1} Q(N-1,\ell,i) \sum_{n=j}^{i-1} h_{n,j} \\
    & \quad + \left(\sum_{n=j}^{N-1} h_{n,j}\right) \lambda_{N,k}^\star \\
    & = \frac{1}{D(N)} \sum_{\ell=1}^{N-1} \sum_{m=1}^{N-1} (-1)^{\ell+m-1} \binom{\ell+m}{m} Q(N-1,m,k) Q(N-1,\ell+1,j) \\
    & \quad + Q(N-1,1,j) \frac{1}{D(N)} \sum_{m=1}^{N-1} (-1)^{m-1} Q(N-1,m,k) \\
    & = \frac{1}{D(N)} \sum_{\ell=0}^{N-1} \sum_{m=1}^{N-1} (-1)^{\ell+m-1} \binom{\ell+m}{m} Q(N-1,m,k) Q(N-1,\ell+1,j) \\
    & = \frac{1}{D(N)} \sum_{\ell=1}^{N-1} \sum_{m=1}^{N-1} (-1)^{\ell+m} \binom{\ell+m-1}{m} Q(N-1,m,k) Q(N-1,\ell,j) 
\end{align*}
where we shift the index $\ell$ by $1$ for the last equality.
Combining the above with \eqref{eqn:lambda-theorem-skj-last-term-expansion}, we obtain
\begin{align*}
    & -\sum_{n=j}^{k-1} h_{n,j} \sum_{i=1}^n \lambda_{k,i}^\star + \sum_{m=k+1}^N \sum_{n=k}^{m-1} \left( h_{n,j} \lambda_{m,k}^\star + h_{n,k} \lambda_{m,j}^\star \right) \\
    & = \frac{1}{D(N)} \sum_{\ell=1}^{N-1} \sum_{m=1}^{N-1} (-1)^{\ell+m} \binom{\ell+m-1}{m} Q(N-1,m,k) Q(N-1,\ell,j) \\
    & \quad + \frac{1}{D(N)} \sum_{\ell=1}^{N-1} \sum_{i=1}^{N-1} (-1)^{\ell+i} \binom{\ell+i-1}{i-1} Q(N-1,\ell,j) Q(N-1,i,k) \\
    & = \frac{1}{D(N)} \sum_{\ell=1}^{N-1} \sum_{m=1}^{N-1} (-1)^{\ell+m} \binom{\ell+m-1}{m} Q(N-1,m,k) Q(N-1,\ell,j) \\
    & \quad + \frac{1}{D(N)} \sum_{\ell=1}^{N-1} \sum_{m=1}^{N-1} (-1)^{\ell+m} \binom{\ell+m-1}{m-1} Q(N-1,\ell,j) Q(N-1,m,k) \\
    & = \frac{1}{D(N)} \sum_{\ell=1}^{N-1} \sum_{m=1}^{N-1} (-1)^{\ell+m} \binom{\ell+m}{m} Q(N-1,m,k) Q(N-1,\ell,j) \\
    & = -\lambda_{k,j}^\star
\end{align*}
where the third equality changes the index $i$ to $m$ and the last equality uses $\binom{\ell+m-1}{m} + \binom{\ell+m-1}{m-1} = \binom{\ell+m}{m}$.
This proves~\eqref{eqn:skj-is-0}.

As mentioned in \ref{subsubsection:lambda-characterization}, we provide the uniqueness proof when we show \cref{theorem:negative-lambda-suboptimality}.

\end{proof}

\section{Proofs for \cref{subsection:necessity-of-nonnegative-H-certificates}}
\label{section:H-certificate-necessity-proof}

\begin{proof}[Proof of \cref{lemma:worst-case-gram-matrix-explicit-form}]
First, note that from the proof of \cref{theorem:H-invariance-necessity}, we have \eqref{eqn:necessary-condition-Gmp1y0-expression} (with $R=1$), which implies
\begin{align}
    Gy_k & = \sum_{m=0}^k (-1)^m P(k,m; 2H) G^{m+1} y_0 \nonumber \\
    & = \sum_{m=0}^k (-1)^m P(k,m; H) 2^m G^{m+1} y_0 \nonumber \\
    & = -\frac{1}{\sqrt{N}} \sum_{m=0}^k (-1)^m P(k,m; H) \sum_{j=1}^{m+1} (-1)^{j-1} \binom{m}{j-1} e_j \nonumber \\
    & = -\frac{1}{\sqrt{N}} \sum_{\ell=1}^{k+1} \left( \sum_{m=\ell-1}^k (-1)^{m+\ell-1} \binom{m}{\ell-1} P(k,m; H) \right) e_\ell 
    \label{eqn:gram-matrix-gyk-formula}
\end{align}
where the last line changes the order of summations and replace the index $j$ with $\ell$.

Now we show (a). Using \eqref{eqn:gram-matrix-gyk-formula}, for $i,j=1,\dots,N$, we compute:
\begin{align*}
    \left(\vG_0\right)_{i,j} & = \inprod{g_i}{g_j} \\
    & = \inprod{Gy_{i-1}}{Gy_{j-1}} \\
    & = \frac{1}{N} \sum_{\ell=1}^{\min\{i,j\}} \left( \sum_{m=\ell-1}^{i-1} (-1)^{m+\ell-1} \binom{m}{\ell-1} P(i-1,m) \right) \left( \sum_{n=\ell-1}^{j-1} (-1)^{n+\ell-1} \binom{n}{\ell-1} P(j-1,n) \right) \\
    & = \frac{1}{N} \sum_{\ell=1}^{\min\{i,j\}} \sum_{m=\ell-1}^{i-1} \sum_{n=\ell-1}^{j-1} (-1)^{m+n} \binom{m}{\ell-1} \binom{n}{\ell-1} P(i-1,m)  P(j-1,n) \\
    & = \frac{1}{N} \sum_{m=0}^{i-1} \sum_{n=0}^{j-1} \sum_{\ell=1}^{\min\{m+1,n+1\}} (-1)^{m+n} \binom{m}{\ell-1} \binom{n}{\ell-1} P(i-1,m) P(j-1,n) \\
    & = \frac{1}{N} \sum_{m=0}^{i-1} \sum_{n=0}^{j-1} (-1)^{m+n} P(i-1,m) P(j-1,n) \sum_{\ell=1}^{\min\{m+1,n+1\}}  \binom{m}{\ell-1} \binom{n}{\ell-1} \\
    & = \frac{1}{N} \sum_{m=0}^{i-1} \sum_{n=0}^{j-1} (-1)^{m+n} P(i-1,m) P(j-1,n) \sum_{\ell=0}^{\min\{m,n\}}  \binom{m}{\ell} \binom{n}{\ell} 
\end{align*}
where in the last line we shift the index $\ell$ by $1$.
By \cref{lemma:chu-vandermonde}, we have $\sum_{\ell=0}^{\min\{m,n\}} \binom{m}{\ell} \binom{n}{\ell} = \binom{m+n}{m}$, and using this to simplify the last expression we obtain
\begin{align*}
    \left(\vG_0\right)_{i,j} = \frac{1}{N} \sum_{m=0}^{i-1} \sum_{n=0}^{j-1} (-1)^{m+n} \binom{m+n}{m} P(i-1,m) P(j-1,n) .
\end{align*}
Next, we show (b). We have $y_0 - y_\star = y_0 = -\frac{1}{\sqrt{N}}(e_1 + \dots + e_N)$, so using \eqref{eqn:gram-matrix-gyk-formula} we have for $i=1,\dots,N$:
\begin{align*}
    \left( \vG_0 \right)_{i,N+1} & = \inprod{Gy_{i-1}}{y_0 - y_\star} \\
    & = \frac{1}{N} \sum_{\ell=1}^i \sum_{m=\ell-1}^{i-1} (-1)^{m+\ell-1} \binom{m}{\ell-1} P(i-1,m; H) \\
    & = \frac{1}{N} \sum_{m=0}^{i-1} \sum_{\ell=1}^{m+1} (-1)^{m+\ell-1} \binom{m}{\ell-1} P(i-1,m; H) \\
    & = \frac{1}{N} \sum_{m=0}^{i-1} (-1)^m P(i-1,m; H) \sum_{\ell=1}^{m+1} (-1)^{\ell-1} \binom{m}{\ell-1}
\end{align*}
where in the third line, we change the order of summations.
Now, the inner summation $\sum_{\ell=1}^{m+1} (-1)^{\ell-1} \binom{m}{\ell-1}$ in the last line is $0$ when $m>0$ and $1$ when $m=0$, and therefore, we conclude that
\begin{align*}
    \left( \vG_0 \right)_{i,N+1} = \frac{1}{N} P(i-1,0; H) = \frac{1}{N} .
\end{align*}
Finally, (c) is straightforward: $\left( \vG_0 \right)_{N+1,N+1} = \sqnorm{y_0 - y_\star} = \sqnorm{-\frac{1}{\sqrt{N}}(e_1 + \dots + e_N)} = 1$.
\end{proof}

\begin{proof}[Proof of \cref{lemma:worst-case-gram-matrix-properties}]

We first show (a). Observe that 
\begin{align*}
    \Tr(\vG_0 \vA_{i,j}) & = \inprod{x_i - x_j}{g_i - g_j} \\
    & = \inprod{y_{i-1} - y_{j-1} - (g_i - g_j)}{g_i - g_j} \\
    & = \inprod{y_{i-1} - y_{j-1}}{g_i - g_j} - \sqnorm{g_i - g_j} \\
    & = \inprod{y_{i-1} - y_{j-1}}{G(y_{i-1} - y_{j-1})} - \sqnorm{G(y_{i-1} - y_{j-1})} \\
    & = (y_{i-1} - y_{j-1})^\T G (y_{i-1} - y_{j-1}) - (y_{i-1} - y_{j-1})^\T G^\T G (y_{i-1} - y_{j-1}) .
\end{align*}
and similarly, 
\begin{align*}
    \Tr(\vG_0 \vB_i) & = \inprod{x_i - y_\star}{g_i} \\
    & = \inprod{y_{i-1} - g_i}{g_i} \\
    & = \inprod{y_{i-1}}{g_i} - \sqnorm{g_i} \\
    & = \inprod{y_{i-1}}{Gy_{i-1}} - \sqnorm{Gy_{i-1}} \\
    & = y_{i-1}^\T G y_{i-1} - y_{i-1}^\T G^\T G y_{i-1} .
\end{align*}
Now, by direct computation, we observe that
\begin{align*}
    G^\T G = \frac{1}{4} \begin{bmatrix}
        2 & -1 & 0 & \dots & 0 & 1 \\
        -1 & 2 & -1 & \dots & 0 & 0 \\
        0 & -1 & 2 & \dots & 0 & 0 \\
        \vdots & \vdots & \vdots & \ddots & \vdots & \vdots \\
        0 & 0 & 0 & \dots & 2 & -1 \\
        1 & 0 & 0 & \dots & -1 & 2
    \end{bmatrix}
    = \frac{1}{2} (G + G^\T) .
\end{align*}
Therefore, for any $y \in \reals^N$, we have
\begin{align*}
    y^\T G y = y^\T \frac{1}{2} (G + G^\T) y = y^\T G^\T G y ,
\end{align*}
which, together with the above expressions, implies that $\Tr(\vG_0 \vA_{i,j}) = 0 = \Tr(\vG_0 \vB_i)$.

For (b), we see that when $P(N-1,m) = \frac{1}{N} \binom{N}{m+1}$, the proof of \cref{theorem:H-invariance-necessity} shows that \eqref{eqn:necessary-condition-projection-identity} (with $R=1$) holds, i.e.,
\begin{align*}
    g_N = Gy_{N-1} = -\frac{1}{N\sqrt{N}}(e_1 + \dots + e_N) = \frac{1}{N} (y_0 - y_\star) .
\end{align*}
Therefore,
\begin{align*}
    \left(\vG_0\right)_{i,N} = \left(\vG_0\right)_{N,i} = \inprod{g_N}{g_i} = \frac{1}{N}\inprod{y_0 - y_\star}{g_i} = \frac{1}{N} \left(\vG_0\right)_{i,N+1} = \frac{1}{N^2}
\end{align*}
where we use \cref{lemma:worst-case-gram-matrix-explicit-form}(b) for the last equality.
In particular, this shows $\det\vG_0 = 0$ because 
\begin{align*}
    \left(\vG_0 \right)_{N,:} = \begin{bmatrix} \frac{1}{N^2} & \dots & \frac{1}{N^2} & \frac{1}{N} \end{bmatrix} = \frac{1}{N} \begin{bmatrix} \frac{1}{N} & \dots & \frac{1}{N} & 1 \end{bmatrix} = \frac{1}{N} \left(\vG_0 \right)_{N+1,:} 
\end{align*}
where $\left(\vG_0 \right)_{N,:}$ and $\left(\vG_0 \right)_{N+1,:}$ respectively denote the $N$-th and $(N+1)$-th rows of $\vG_0$.

Finally, we show (c).
Note that $\left(\vG_0\right)_{1,1} = \frac{1}{N} > 0$. Next, for $k\ge 2$, because $\left(\vG_0\right)_{1:k,1:k}$ is the Gram matrix for the set of vectors $g_1,\dots,g_k$, we have
\begin{align}
    \det \left(\vG_0\right)_{1:k,1:k} = \sqnorm{g_1 \wedge g_2 \wedge \dots \wedge g_k}
    \label{eqn:gram-determinant-using-wedge-product}
\end{align}
where $\wedge$ is the exterior product.
Now we show by induction that
\begin{align}
\label{eqn:gk-wedge-product-formula}
    g_1 \wedge \dots \wedge g_k = (-1)^{k} \frac{1}{N^{k/2}} P(1,1) \cdots P(k-1,k-1) (e_1 \wedge \dots \wedge e_k)
\end{align}
for $k=2,\dots,N$.
For the base case $k=2$, the formula~\eqref{eqn:gram-matrix-gyk-formula} gives $g_1 = Gy_0 = -\frac{1}{\sqrt{N}}e_1$ and
\[
    g_2 = Gy_1 = -\frac{1}{\sqrt{N}}\left( (1-P(1,1))e_1 + P(1,1)e_2 \right) .
\]
Therefore, using $e_1 \wedge e_1 = 0$ we have
\begin{align*}
    g_1 \wedge g_2 = \frac{1}{N} P(1,1) (e_1 \wedge e_2)
\end{align*}
Now assume that \eqref{eqn:gk-wedge-product-formula} holds for some $2\le k\le N-1$ and consider $g_1 \wedge \dots \wedge g_{k+1}$.
By \eqref{eqn:gram-matrix-gyk-formula}, we have
\begin{align*}
    g_{k+1} = Gy_k \in \spann\{e_1,\dots,e_k\} - \frac{1}{\sqrt{N}} P(k,k) e_{k+1}
\end{align*}
and combining this with the induction hypothesis, we obtain 
\begin{align*}
    g_1 \wedge \dots \wedge g_{k+1} & = \left( (-1)^{k} \frac{1}{N^{k/2}} P(1,1) \cdots P(k-1,k-1) (e_1 \wedge \dots \wedge e_k) \right) \wedge \left( -\frac{1}{\sqrt{N}} P(k,k) e_{k+1} \right) \\
    & = (-1)^{k+1} \frac{1}{N^{(k+1)/2}} P(1,1) \cdots P(k,k) (e_1 \wedge \dots \wedge e_{k+1}) 
\end{align*}
as desired, completing the induction.
Then, using \eqref{eqn:gram-determinant-using-wedge-product}, we have
\begin{align*}
    \det \left(\vG_0\right)_{1:k,1:k} & = \sqnorm{(-1)^{k} \frac{1}{N^{k/2}} P(1,1) \cdots P(k-1,k-1) (e_1 \wedge \dots \wedge e_k)} \\
    & = \frac{1}{N^k} P(1,1)^2 \cdots P(k-1,k-1)^2 \\
    & = \frac{1}{N^k} h_{1,1}^{2k-2} h_{2,2}^{2k-4} \cdots h_{k-1,k-1}^2 .
\end{align*}
As we assume $P(N-1,N-1) = h_{1,1} h_{2,2} \cdots h_{N-1,N-1} = \frac{1}{N} \binom{N}{N} = \frac{1}{N}$, none of the diagonal elements of the H-matrix are $0$, so all these leading principal minors are positive.
\end{proof}

\begin{proof}[Proof of \cref{lemma:determinant}]
By Jacobi's formula, $\frac{d(\det (\vG_0 + t\vdelta))}{dt} \big|_{t=0} = \mathrm{Tr}\left( \vdelta \cdot \adj \vG_0 \right)$, where $\mathrm{adj}$ denotes the adjugate (transpose of the cofactor matrix).
As we are assuming $(\vdelta)_{N+1,N+1} = 0$, the desired statement follows if we show that
\begin{align*}
    \adj \vG_0 = 
    \begin{bmatrix}
        0 & \cdots & 0 & 0 & 0 \\
        \vdots & \ddots & \vdots & \vdots & \vdots \\
        0 & \cdots & 0 & 0 & 0 \\
        0 & \cdots & 0 & \frac{P(1,1)^2 \cdots P(N-1,N-1)^2}{N^{N-2}} & -\frac{P(1,1)^2 \cdots P(N-1,N-1)^2}{N^{N-1}} \\
        0 & \cdots & 0 & -\frac{P(1,1)^2 \cdots P(N-1,N-1)^2}{N^{N-1}} & *
    \end{bmatrix} .
\end{align*}
First, note that $\adj \vG_0$ is symmetric because $\vG_0$ is.
Now for $i=1,\dots,N+1$, we immediately see that $\left(\adj \vG_0\right)_{i,j} = 0$ for any $j \le N-1$ because eliminating the $j$-th row and $i$-th column from $\vG_0$ preserves the last two rows, except that their $i$-th column entries are eliminated.
That is, the minor matrix corresponding to the $(j,i)$-cofactor has two linearly dependent rows in the end (see \cref{lemma:worst-case-gram-matrix-explicit-form}(b, c) and \cref{lemma:worst-case-gram-matrix-properties}(b)), and therefore its determinant is $0$.
By symmetry of $\adj \vG_0$, we also have $\left(\adj \vG_0\right)_{i,j} = 0$ if $i\le N-1$.

Now it only remains to check what $\left(\adj \vG_0\right)_{N,N}$ and $\left(\adj \vG_0\right)_{N,N+1} = \left(\adj \vG_0\right)_{N+1,N}$ are.
For this, we use the definition of cofactor and Lemmas~\ref{lemma:worst-case-gram-matrix-explicit-form}(b, c) and \ref{lemma:worst-case-gram-matrix-properties}(b) to see that
\begin{align*}
    \left(\adj \vG_0\right)_{N,N} & = (-1)^{2N} \det \begin{bmatrix}
        \begin{array}{c|c}
             \left(\vG_0\right)_{1:N-1,1:N-1} & \begin{array}{c} \frac{1}{N} \\ \vdots \\ \frac{1}{N} \end{array} \\
            \hline 
            \begin{array}{ccc} \frac{1}{N} & \cdots & \frac{1}{N} \end{array} & 1
        \end{array}
    \end{bmatrix}
    = N \det \begin{bmatrix}
        \begin{array}{c|c}
             \left(\vG_0\right)_{1:N-1,1:N-1} & \begin{array}{c} \frac{1}{N} \\ \vdots \\ \frac{1}{N} \end{array} \\
            \hline 
            \begin{array}{ccc} \frac{1}{N^2} & \cdots & \frac{1}{N^2} \end{array} & \frac{1}{N}
        \end{array}
    \end{bmatrix} \\
    & = N^2 \det \begin{bmatrix}
        \begin{array}{c|c}
             \left(\vG_0\right)_{1:N-1,1:N-1} & \begin{array}{c} \frac{1}{N^2} \\ \vdots \\ \frac{1}{N^2} \end{array} \\
            \hline 
            \begin{array}{ccc} \frac{1}{N^2} & \cdots & \frac{1}{N^2} \end{array} & \frac{1}{N^2}
        \end{array}
    \end{bmatrix} = N^2 \det \left(\vG_0\right)_{1:N,1:N} = \frac{P(1,1)^2 \cdots P(N-1,N-1)^2}{N^{N-2}}
\end{align*}
and similarly,
\begin{align*}
    \left(\adj \vG_0\right)_{N+1,N} & = (-1)^{2N+1} \det \begin{bmatrix}
        \begin{array}{c}
             \left(\vG_0\right)_{1:N-1,1:N} \\
            \hline 
            \begin{array}{ccc} \frac{1}{N} & \cdots & \frac{1}{N} \end{array} 
        \end{array}
    \end{bmatrix}
    = -N \det \begin{bmatrix}
        \begin{array}{c}
             \left(\vG_0\right)_{1:N-1,1:N} \\
            \hline 
            \begin{array}{ccc} \frac{1}{N^2} & \cdots & \frac{1}{N^2} \end{array} 
        \end{array}
    \end{bmatrix} \\
    & = -N \det \left(\vG_0\right)_{1:N,1:N} = -\frac{P(1,1)^2 \cdots P(N-1,N-1)^2}{N^{N-1}} ,
\end{align*}
as desired.
\end{proof}

\begin{proof}[Proof of \cref{lemma:linear-algebra-trick}]
First, we show that $a_j < 0$ for some $j=1,\dots,p$ is necessary for the existence of $w \in W$ satisfying both (a) and (b).
Suppose that $a_j \ge 0$ for all $j=1,\dots,p$, and suppose that there is $w \in W$ such that $\inprod{w}{v_1}, \inprod{w}{v_2} > 0$ and $\inprod{w}{u_i} \ge 0$ for $i=1,\dots,p$.
By assumption, we have
\begin{align*}
    a_1 u_1 + \dots + a_p u_p + b_1 v_1 + b_2 v_2 = 0
\end{align*}
and taking the inner product of the both sides with $w$ gives
\begin{align*}
    0 = a_1 \inprod{w}{u_1} + \dots + a_p \inprod{w}{u_p} + b_1 \inprod{w}{v_1} + b_2 \inprod{w}{v_2} > 0 
\end{align*}
because $a_j \ge 0$ and $b_1, b_2 > 0$, a contradiction.
This shows that (a) and (b) cannot both hold unless $a_j < 0$ for some $j=1,\dots,p$.

Next, suppose that at least one of $a_1,\dots,a_p$ is negative, and fix an index $j \in \{1,\dots,p\}$ for which $a_j < 0$.
Consider the subspaces
\begin{align*}
    V_1 & = \spann \{ u_1, \dots, u_{j-1}, u_{j+1}, \dots, u_p , v_1 \} \\
    V_2 & = \spann \{ u_1, \dots, u_{j-1}, u_{j+1}, \dots, u_p , v_2 \} .
\end{align*}
We show that $v_1 \notin V_2$. 
Suppose to the contrary that $v_1 \in V_2$. Then there are $\alpha_1, \dots, \alpha_{j-1}, \alpha_{j+1}, \dots, \alpha_p, \beta_2 \in \reals$ such that
\begin{align*}
    \alpha_1 u_1 + \dots + \alpha_{j-1} u_{j-1} + \alpha_{j+1} u_{j+1} + \dots + \alpha_p u_p + v_1 + \beta_2 v_2 = 0 . 
\end{align*}
This implies
\begin{align*}
    \vx = (\alpha_1, \dots, \alpha_{j-1}, 0, \alpha_{j+1}, \dots, \alpha_p, 1, \beta_2) \in \ker L = \spann \{(a_1, \dots, a_{j-1}, a_j, a_{j+1}, \dots, a_p, b_1, b_2)\}
\end{align*}
and because $a_j \ne 0$, the only way $\vx$ can have $0$ in the $j$-th entry is $\vx = 0$, which is a contradiction because its $(p+1)$-th entry is $1$.
This proves that $v_1 \notin V_2$. Similarly, we have $v_2 \notin V_1$.

Now let $w = \proj_{V_2^\perp} (v_1) + \proj_{V_1^\perp} (v_2)$.
We show $w$ satisfies $\inprod{w}{v_1} > 0, \inprod{w}{v_2} > 0$, $\inprod{w}{u_i} = 0$ for $i=1,\dots,j-1,j+1,\dots,p$, and $\inprod{w}{u_j} > 0$, which in particular implies (a) and (b).
Indeed,
\begin{align*}
    \inprod{w}{v_1} = \inprod{\proj_{V_2^\perp} (v_1)}{v_1} + \inprod{\proj_{V_1^\perp} (v_2)}{v_1} = \inprod{\proj_{V_2^\perp} (v_1)}{v_1} = \inprod{\proj_{V_2^\perp} (v_1)}{\proj_{V_2} (v_1) + \proj_{V_2^\perp} (v_1)} = \sqnorm{\proj_{V_2^\perp} (v_1)} > 0 ,
\end{align*}
where we use $v_1 \in V_1 \implies V_1^\perp \perp v_1$ and $\proj_{V_2^\perp} (v_1) \ne 0$ because $v_1 \notin V_2$.
Symmetrically, we also have $\inprod{w}{v_2} > 0$.
Additionally, for $i = 1, \dots, j-1, j+1, \dots, p$, both $V_1^\perp$ and $V_2^\perp$ are orthogonal to $u_i$, and therefore $\inprod{w}{u_i} = 0$.
Then we have
\begin{align*}
    0 = a_1 \inprod{w}{u_1} + \dots + a_p \inprod{w}{u_p} + b_1 \inprod{w}{v_1} + b_2 \inprod{w}{v_2} = a_j \inprod{w}{u_j} + b_1 \inprod{w}{v_1} + b_2 \inprod{w}{v_2} 
\end{align*}
which implies $\inprod{w}{u_j} = -\frac{1}{a_j} \left( b_1 \inprod{w}{v_1} + b_2 \inprod{w}{v_2} \right) > 0$.
\end{proof}

\begin{proof}[Proof of \cref{lemma:lambda-are-unique-solutions}]

By recursively using $\vy_{i+1} = \vy_i - \sum_{j=0}^i 2h_{i+1,j+1} \vg_{j+1}$ we have for $i=0,\dots,N-1$,
\begin{align*}
    \vy_i = \vy_0 - \sum_{k=1}^{i} \sum_{j=0}^{k-1} 2h_{k,j+1} \vg_{j+1} = \ve_{N+1} - \sum_{k=1}^{i} \sum_{j=1}^k 2h_{k,j} \ve_j = \ve_{N+1} - \sum_{j=1}^i \left(\sum_{k=j}^i 2h_{k,j}\right) \ve_j
\end{align*}
and 
\begin{align*}
    \vx_{i+1} = \vy_i - \vg_{i+1} = \ve_{N+1} - \sum_{j=1}^i \left(\sum_{k=j}^i 2h_{k,j}\right) \ve_j - \ve_{i+1} .
\end{align*}
This shows that for any $i=2,\dots,N$ and $j=1,\dots,i-1$, 
\begin{align*}
    \vx_i - \vx_j \in \spann\{\ve_1, \dots, \ve_i\}
\end{align*}
and hence, 
\begin{align*}
    \vA_{i,j} & = \frac{1}{2} \left( (\vx_i - \vx_j) (\vg_i - \vg_j)^\T + (\vg_i - \vg_j) (\vx_i - \vx_j)^\T \right) \\
    & = \frac{1}{2} \left( (\vx_i - \vx_j) (\ve_i - \ve_j)^\T + (\ve_i - \ve_j) (\vx_i - \vx_j)^\T \right) \\
    & = \begin{bmatrix}
        \begin{array}{c|c}
             * & 0_{i \times (N-i+1)} \\
            \hline 
            0_{(N-i+1)\times i} & 0_{(N-i+1) \times (N-i+1)}
        \end{array}
   \end{bmatrix} 
\end{align*}
and
\begin{align*}
    \vB_i & = \frac{1}{2} \left( (\vx_i - \vy_\star) (\vg_i - \vg_\star)^\T + (\vg_i - \vg_\star) (\vx_i - \vy_\star)^\T \right) = \frac{1}{2} \left( \vx_i \ve_i^\T + \ve_i \vx_i^\T \right) = \begin{bmatrix}
        \begin{array}{c|c}
             \left(\vB_i\right)_{1:N,1:N} & \frac{1}{2} \ve_i \\
            \hline 
            \frac{1}{2} \ve_i^\T & 0
        \end{array}
   \end{bmatrix} .
\end{align*}
Assuming
\begin{align}
    \sum_{i=2}^N \sum_{j=1}^{i-1} a_{i,j} \vA_{i,j} + \sum_{i=1}^N b_i \vB_i + c_N \vC_N + d_N \vD_N + e_N \vE_N = 0   
    \label{eqn:matrix-summation-zero}
\end{align}
and considering the $(N+1,i)$-entry of the left hand side for $i=1,\dots,N-1$, we observe that only the matrix $b_i \vB_i$ contributes to it, and therefore we deduce $b_i = 0$ ($i=1,\dots,N-1$).
We also have $c_N = 0$ because $\vC_N$ is the only matrix with nonzero $(N+1,N+1)$-entry. 
Next, only $\vB_N$ and $\vD_N$ contribute to the $(N+1,N)$-entry, so we have
\begin{align}
\label{eqn:bN-dN-relation}
    \frac{1}{2} b_N - \frac{1}{N} d_N = 0 \iff b_N = \frac{2d_N}{N} .
\end{align}
Now we compare the remaining entries in the $N$-th row of both sides of \eqref{eqn:matrix-summation-zero}.
Because $\vx_i \in \ve_{N+1} + \spann\{\ve_1,\dots,\ve_i\}$, among all $\vA_{i,j}$'s and $\vB_i$'s, we see that only $\vA_{N,1}, \dots, \vA_{N,N-1}, \vB_N$ contribute to the $N$-th row.
Extracting only the $N$-th row of $\vA_{N,i}$ ($i=1,\dots,N-1$) gives:
\begin{align*}
    \left(\vA_{N,i}\right)_{N,1:N} & = \left[\frac{1}{2} \left((\vx_N - \vx_i)(\ve_N - \ve_i)^\T + (\ve_N - \ve_i)(\vx_N - \vx_i)^\T \right)\right]_{N,1:N} \\
    & = \left[ \frac{1}{2} \left( -\ve_N (\ve_N - \ve_i)^\T + \ve_N \left( -\sum_{j=1}^{i-1} \left( \sum_{k=i}^{N-1} 2h_{k,j} \right) \ve_j - \sum_{j=i}^{N-1} \left( \sum_{k=j}^{N-1} 2h_{k,j} \right) \ve_j + \ve_i - \ve_N \right)^\T \right) \right]_{N,1:N} \\
    & = \ve_i^\T - \ve_N^\T - \sum_{j=1}^{N-1} \left(\sum_{k=\max\{i,j\}}^{N-1} h_{k,j} \right) \ve_j^\T
\end{align*}
where in the second line, we omit the terms within the expression for $\vA_{N,i}$ that do not contribute to the $N$-th row.
Similarly, we have
\begin{align*}
    \left(\vB_N\right)_{N,1:N} & = \left[ \frac{1}{2} \left( \vx_N \ve_N^\T + \ve_N \vx_N^\T \right) \right]_{N,1:N} \\
    & = \left[ \frac{1}{2} \left( -\ve_N \ve_N^\T + \ve_N \left( -\sum_{j=1}^{N-1} \left( \sum_{k=j}^{N-1} 2h_{k,j} \right) \ve_j - \ve_N \right)^\T \right) \right]_{N,1:N} \\
    & = -\ve_N^\T - \sum_{j=1}^{N-1} \left( \sum_{k=j}^{N-1} h_{k,j} \right) \ve_j^\T .
\end{align*}
The influence from $\vC_N, \vD_N, \vE_N$, combined, is simply given by
\begin{align*}
    \left( c_N\vC_N + d_N\vD_N + e_N\vE_N\right)_{N,1:N} = (d_N + e_N) \ve_N^\T .
\end{align*}
Therefore, we have
\begin{align*}
    0 & = \left[ \sum_{i=2}^N \sum_{j=1}^{i-1} a_{i,j} \vA_{i,j} + \sum_{i=1}^N b_i \vB_i + c_N \vC_N + d_N \vD_N + e_N \vE_N \right]_{N,1:N} \\
    & = \sum_{i=1}^{N-1} a_{N,i} \left(\vA_{N,i}\right)_{N,1:N} + b_N \left(\vB_N\right)_{N,1:N} + (d_N + e_N) \ve_N^\T \\
    & = \left(d_N + e_N - b_N - \sum_{i=1}^{N-1} a_{N,i}\right) \ve_N^\T - \sum_{i=1}^{N-1} a_{N,i} \left(\sum_{j=1}^{N-1} \left(\sum_{k=\max\{i,j\}}^{N-1} h_{k,j}\right) \ve_j^\T - \ve_i^\T \right) - b_N \sum_{j=1}^{N-1} \left( \sum_{k=j}^{N-1} h_{k,j} \right) \ve_j^\T \\
    & = \left(d_N + e_N - b_N - \sum_{i=1}^{N-1} a_{N,i}\right) \ve_N^\T + \sum_{i=1}^{N-1} a_{N,i} \ve_i^\T - b_N \sum_{j=1}^{N-1} \left( \sum_{k=j}^{N-1} h_{k,j} \right) \ve_j^\T \\
    & \quad - \sum_{i=1}^{N-1} a_{N,i} \sum_{j=1}^{N-1} \left(\sum_{k=\max\{i,j\}}^{N-1} h_{k,j}\right) \ve_j^\T \\
    & = \left(d_N + e_N - b_N - \sum_{j=1}^{N-1} a_{N,j}\right) \ve_N^\T + \sum_{j=1}^{N-1} a_{N,j} \ve_j^\T - b_N \sum_{j=1}^{N-1} \left( \sum_{k=j}^{N-1} h_{k,j} \right) \ve_j^\T - \sum_{j=1}^{N-1} \sum_{k=j}^{N-1} \sum_{i=1}^{k} a_{N,i} h_{k,j} \ve_j^\T \\
    & = \left(d_N + e_N - b_N - \sum_{j=1}^{N-1} a_{N,j}\right) \ve_N^\T + \sum_{j=1}^{N-1} a_{N,j} \ve_j^\T - b_N \sum_{j=1}^{N-1} \left( \sum_{i=j}^{N-1} h_{i,j} \right) \ve_j^\T - \sum_{j=1}^{N-1} \sum_{i=j}^{N-1} \sum_{k=1}^{i} h_{i,j} a_{N,k} \ve_j^\T \\
    & = \left(d_N + e_N - b_N - \sum_{j=1}^{N-1} a_{N,j}\right) \ve_N^\T + \sum_{j=1}^{N-1} \left( a_{N,j} - \sum_{i=j}^{N-1} \sum_{k=1}^{i} h_{i,j} a_{N,k} - b_N \sum_{i=j}^{N-1} h_{i,j} \right) \ve_j^\T 
\end{align*}
where in the second-to-last equality, we change all occurrences of $i$ to $k$ and vice versa.
This implies that 
\begin{gather}  
    \sum_{j=1}^{N-1} a_{N,j} = d_N + e_N - b_N \label{eqn:aNj-sum} \\
    a_{N,j} - \sum_{i=j}^{N-1} \sum_{k=1}^{i} h_{i,j} a_{N,k} - b_N \sum_{i=j}^{N-1} h_{i,j} = 0 \quad (j=1,\dots,N-1) . \label{eqn:aNj-linear-system}
\end{gather}
Given $b_N, d_N, e_N$, this is an overdetermined linear system in terms of the variables $a_{N,1},\dots,a_{N,N-1}$ (where there are $N$ equations and $N-1$ variables).
The (square) matrix of the linear system consisting only of the $N-1$ equations in \eqref{eqn:aNj-linear-system} is
\begin{align}
\label{eqn:aNj-system-coefficient-matrix}
    \vM_{N} = \begin{bmatrix}
        1 - h_{1,1} - \dots - h_{N-1,1} & -h_{2,1} - \dots - h_{N-1,1} & \cdots & -h_{N-2,1} - h_{N-1,1} & -h_{N-1,1} \\
        -h_{2,2} - \dots - h_{N-1,2} & 1 - h_{2,2} - \dots - h_{N-1,2} & \cdots & -h_{N-2,2} - h_{N-1,2} & -h_{N-1,2} \\
        \vdots & \vdots & \ddots & \vdots & \vdots \\
        -h_{N-2,N-2} - h_{N-1,N-2} & -h_{N-2,N-2} - h_{N-1,N-2} & \cdots & 1 - h_{N-2,N-2} - h_{N-1,N-2} & -h_{N-1,N-2} \\
        -h_{N-1,N-1} & -h_{N-1,N-1} & \cdots & -h_{N-1,N-1} & 1 - h_{N-1,N-1}
    \end{bmatrix} .
\end{align}
But we can compute the determinant of this matrix as
\begin{align*}
    \det\vM_N = 1 - P(N-1, 1) + P(N-1, 2) - \dots + (-1)^{N-1} P(N-1, N-1) = D(N) = \frac{1}{N} \ne 0
\end{align*}
(we show $\det\vM_N = D(N)$ below as \cref{lemma:coefficient-matrix-determinant-is-DN}, and $D(N) = \frac{1}{N}$ by \cref{lemma:DN-under-H-invariance}).
Therefore, given $b_N$, the values of $a_{N,j}$ ($j=1,\dots,N-1$) satisfying \eqref{eqn:aNj-linear-system} are determined uniquely.
But note that equations \eqref{eqn:aNj-linear-system} are, up to the constant factor $b_N$, same as the linear system $s_{N,j}(\lambda) = 0$ ($j=1,\dots,N-1$) where only $\lambda_{N,1},\dots,\lambda_{N,N-1}$ are involved (we replace $\lambda_{N,j}$ with $\frac{a_{N,j}}{b_N}$).
We have shown that $\lambda_{N,j}^\star(H)$ in \cref{theorem:lambda-characterization} are solutions to that system, so we deduce that 
\begin{align*}
    a_{N,j} = b_N \lambda_{N,j}^\star (H)
\end{align*}
are the unique solutions to \eqref{eqn:aNj-linear-system}.
Now, for \eqref{eqn:aNj-sum} to hold as well, we must have
\begin{align*}
    d_N + e_N - b_N = \sum_{j=1}^{N-1} a_{N,j} = \sum_{j=1}^{N-1} b_N \lambda_{N,j}^\star(H) = b_N (N-1) .
\end{align*}
Because we have shown \eqref{eqn:bN-dN-relation}, equivalent to $d_N = \frac{Nb_N}{2}$, this implies that for the system combining \eqref{eqn:aNj-sum} and \eqref{eqn:aNj-linear-system} to be consistent we must have $e_N = \frac{Nb_N}{2} = d_N$, and when this is the case, $a_{N,j} = b_N \lambda_{N,j}^\star(H)$.

Finally, the remaining $a_{k,j}$'s with $1\le j < k \le N-1$ are determined using the first $N-1$ rows and columns of \eqref{eqn:matrix-summation-zero}.
But so far, we have derived $c_N = 0$, $d_N = e_N = \frac{Nb_N}{2}$ and $b_i = 0$ for $i=1,\dots,N-1$, and hence \eqref{eqn:matrix-summation-zero} is now equivalent to \eqref{eqn:matrix-identity-in-plain-equation}.
Therefore, $a_{k,j}$ are determined via linear equations involving the exact same forms as \eqref{eqn:skk-expression} and \eqref{eqn:skj-expression}:
\begin{align}
    \sum_{i=k+1}^N \left( \sum_{j=k}^{i-1} 2h_{j,k} - 1 \right) a_{i,k} - \sum_{j=1}^{k-1} a_{k,j} & = 0, \quad k = 1,\dots,N-1 \label{eqn:kk-entry-zero} \\
    a_{k,j} - \sum_{n=j}^{k-1} h_{n,j} \sum_{i=1}^n a_{k,i} + \underbrace{\sum_{m=k+1}^N \sum_{n=k}^{m-1} \left( h_{n,j} a_{m,k} + h_{n,k} a_{m,j} \right)}_{r_{k,j}} & = 0, \quad j=1,\dots,k-1 . \label{eqn:kj-entry-zero}
\end{align}
It is immediate from \cref{theorem:lambda-characterization} that taking $a_{k,j} = b_N \lambda_{k,j}^\star(H)$ for all $1\le j < k \le N-1$ solves \eqref{eqn:kk-entry-zero} and \eqref{eqn:kj-entry-zero}.
Now if these are \textit{unique} solutions given $a_{N,j} = b_N \lambda_{N,j}^\star(H)$ ($j=1,\dots,N-1$), then we conclude that \eqref{eqn:matrix-summation-zero} is equivalent to
\begin{align*}
    \left( \{a_{i,j}\}_{\substack{i=2,\dots,N \\ j=1,\dots,i-1}} , b_1, \dots, b_{N-1}, b_N , c_N , d_N , e_N \right) = b_N \cdot \left( \{\lambda_{i,j}^\star\}_{\substack{i=2,\dots,N \\ j=1,\dots,i-1}} , 0, \dots, 0, 1 , 0 , \frac{N}{2} , \frac{N}{2} \right)
\end{align*}
(with some abuse of set notations), completing the proof.
Therefore, it only remains to show that there are no other values of $a_{k,j}$ satisfying \eqref{eqn:kk-entry-zero} and \eqref{eqn:kj-entry-zero}.

Note that the expressions $r_{k,j}$ depend only on $a_{m,\cdot}$'s with $m>k$.
It suffices to prove that for each $k=1,\dots,N-1$, if all $a_{m,\cdot}$'s with $m>k$ are already determined (so that $r_{k,\cdot}$ are constants), there is at most one possible set of values of $a_{k,1}, \dots, a_{k,k-1}$ satisfying \eqref{eqn:kk-entry-zero} and \eqref{eqn:kj-entry-zero}.
To see this, observe that viewing only $a_{k,1}, \dots, a_{k,k-1}$ as variables and everything else as constants, \eqref{eqn:kk-entry-zero} and \eqref{eqn:kj-entry-zero} is an overdetermined linear system (with $k$ equations).
Now consider its partial system with the equation in \eqref{eqn:kj-entry-zero} corresponding to $j=1$ removed.
Its coefficient matrix is the $(k-1) \times (k-1)$ square matrix

\begin{align*}
    \begin{bmatrix}
        1 & 1 & \cdots & 1 & 1 \\
        - h_{2,2} - \dots - h_{k-1,2} & 1 - h_{2,2} - \dots - h_{k-1,2} & \cdots & - h_{k-2,2} - h_{k-1,2} & -h_{k-1,2} \\
        \vdots & \vdots & \ddots & \vdots & \vdots \\
        -h_{k-2,k-2} - h_{k-1,k-2} & -h_{k-2,k-2} - h_{k-1,k-2} & \cdots & 1 - h_{k-2,k-2} - h_{k-1,k-2} & -h_{k-1,k-2} \\
        -h_{k-1,k-1} & -h_{k-1,k-1} & \cdots & -h_{k-1,k-1} & 1 - h_{k-1,k-1}
    \end{bmatrix} 
\end{align*}
where the first row corresponds to \eqref{eqn:kk-entry-zero}, which is equivalent to
\begin{align*}
    a_{k,1} + \dots + a_{k,k-1} = \sum_{i=k+1}^{N} \left( \sum_{j=k}^{i-1} 2h_{j,k} - 1 \right) a_{i,k} ,
\end{align*}
the second row corresponds to the $j=2$ case of \eqref{eqn:kj-entry-zero}, which is equivalent to
\begin{align*}
    a_{k,2} - \sum_{n=2}^{k-1} h_{n,2} \sum_{i=1}^n a_{k,i} = a_{k,2} - \sum_{i=1}^{k-1} \left( \sum_{n=\max\{i,2\}}^{k-1} h_{n,2} \right) a_{k,i} = -r_{k,2}
\end{align*}
and so on, until the last row, which corresponds to the $j=k-1$ case of \eqref{eqn:kj-entry-zero}, which is equivalent to
\begin{align*}
    a_{k,k-1} - \sum_{n=k-1}^{k-1} h_{n,k-1} \sum_{i=1}^n a_{k,i} = a_{k,k-1} - \sum_{i=1}^{k-1} h_{k-1,k-1} a_{k,i} = -r_{k,k-1} .
\end{align*}
By performing the elementary row operation of adding $(h_{j,j} + \dots + h_{k-1,j})$ times the first row to the $j$-th row for $j=2,\dots,k-1$, we obtain
\begin{align*}
    \begin{bmatrix}
        1 & 1 & 1 & \cdots & 1 & 1 \\
        0 & 1 & h_{2,2} & \cdots & h_{2,2} + \dots + h_{k-3,2} & h_{2,2} + \cdots + h_{k-2,2} \\
        0 & 0 & 1 & \cdots & h_{3,3} + \dots + h_{k-3,3} & h_{3,3} + \dots + h_{k-2,3} \\ 
        \vdots & \vdots & \vdots & \ddots & \vdots & \vdots \\
        0 & 0 & 0 & \cdots & 1 & h_{k-2,k-2} \\
        0 & 0 & 0 & \cdots & 0 & 1
    \end{bmatrix}
\end{align*}
and the determinant of this matrix is $1$.
This implies that if $a_{k,1}, \dots, a_{k,k-1}$ satisfying \eqref{eqn:kk-entry-zero} and \eqref{eqn:kj-entry-zero} exist, then it is unique (so $a_{k,1}, \dots, a_{k,k-1}$ are uniquely determined as functions of $a_{m,\cdot}$ with $m > k$).
\end{proof}

Finally, it remains to show \cref{lemma:coefficient-matrix-determinant-is-DN}.
We first show a handy recursive formula for $D(k)$, and then prove \cref{lemma:coefficient-matrix-determinant-is-DN}.

\begin{lemma}
\label{lemma:Dk_recursion}
The following holds for $k=1,\dots,N-1$:
\begin{align*}
    D(k+1) = D(k) - \sum_{j=1}^{k} h_{k,j} D(j) = (1 - h_{k,k}) D(k) - \sum_{j=1}^{k-1} h_{k,j} D(j) .
\end{align*}
\end{lemma}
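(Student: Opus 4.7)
The plan is to derive the recursion for $D(k+1)$ directly from its defining formula $D(k+1) = \sum_{m=0}^{k} (-1)^m P(k,m)$ by first establishing a one-step recursion for the polynomials $P(k,m)$ themselves, and then taking an alternating sum.

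First I would prove the following recursion: for $k \ge 1$ and $m \ge 1$,
\begin{align*}
    P(k,m) = P(k-1,m) + \sum_{j=1}^{k} h_{k,j}\, P(j-1,m-1),
\end{align*}
with the convention $P(0,0) = 1$ and $P(k-1,k) = 0$. This follows from the definition of $P(k,m)$ as a sum over increasing index sequences $j(1)\le i(1) < j(2) \le i(2) < \cdots < j(m) \le i(m) \le k$: split into the two disjoint cases $i(m) < k$ (which contributes exactly $P(k-1,m)$) and $i(m) = k$. In the second case, grouping further by the value $j(m)=j \in \{1,\dots,k\}$, the factor $h_{k,j}$ comes out, and the remaining prefix $(i(1),j(1),\dots,i(m-1),j(m-1))$ is constrained only by $i(m-1) < j$, which is precisely the index set defining $P(j-1,m-1)$.

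Next I would plug this into $D(k+1)$. The $m=0$ term gives $P(k,0)=1$; the rest splits as
\begin{align*}
    D(k+1) = 1 + \sum_{m=1}^{k}(-1)^m P(k-1,m) + \sum_{j=1}^{k} h_{k,j} \sum_{m=1}^{k}(-1)^m P(j-1,m-1).
\end{align*}
Since $P(k-1,k)=0$ (there is no room for $k$ increasing pairs in $\{1,\dots,k-1\}$), the first alternating sum equals $D(k)-1$. For the inner sum in the second term, reindex with $n=m-1$ and note that $P(j-1,n)=0$ whenever $n\ge j$, so $\sum_{m=1}^{k}(-1)^m P(j-1,m-1) = -\sum_{n=0}^{j-1}(-1)^n P(j-1,n) = -D(j)$. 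Combining yields $D(k+1) = D(k) - \sum_{j=1}^{k} h_{k,j} D(j)$, and isolating the $j=k$ term gives the second stated form.

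There is really no hard step here; the whole argument is a clean combinatorial bookkeeping. The only thing to be careful about is tracking the vacuous/boundary cases ($P(k-1,k)=0$ and $P(j-1,n)=0$ for $n\ge j$) so that the alternating sums truncate cleanly to $D(k)-1$ and $-D(j)$ respectively. Once the one-step recursion for $P(k,m)$ is in hand, the rest is a short algebraic manipulation.
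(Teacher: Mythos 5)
Your proof is correct and uses essentially the same idea as the paper: partition the index sequences defining $P(k,m)$ (equivalently, the monomials of $D(k+1)$) according to whether $i(m)=k$, and group the $i(m)=k$ terms by $j(m)=j$ to extract the factor $h_{k,j}$ times $P(j-1,m-1)$. The only cosmetic difference is that you package this as an intermediate one-step recursion for $P(k,m)$ before taking the alternating sum, while the paper performs the same decomposition directly on $D(k+1)$; your handling of the vacuous boundary cases is also correct.
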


\begin{proof}
A monomial appearing in $D(k+1) = 1 + \sum_{m=1}^k (-1)^m P(k,m; H)$ is of the form $(-1)^m \prod_{r=1}^m h_{i(r),j(r)}$ for some $m \in \{1,\dots,k\}$ and $j(r) \le i(r) < j(r+1) \le i(r+1) \le k$ for $r=1,\dots,m-1$.
We have two cases: either $i(m) \le k-1$ or $i(m) = k$.
In the former case, $(i(1),j(1),\dots,i(m),j(m)) \in I(k-1,m)$, so $(-1)^m \prod_{r=1}^m h_{i(r),j(r)}$ is a part of $D(k)$ (using the same definition of the index set $I(\cdot,\cdot)$ as in the proof of \cref{lemma:P_Q_identities}).
In the latter case, if $m\ge 2$, $i(m-1) < j(m)$, so
\begin{align*}
    (i(1),j(1),\dots,i(m-1),j(m-1)) \in I(j(m)-1,m-1) .
\end{align*}
Thus, we can write
\begin{align*}
    D(k+1) & = D(k) - \sum_{j(1)=1}^k h_{k,j(1)} - \sum_{m=2}^{k} (-1)^{m-1} \sum_{j(m)=1}^{k} h_{k,j(m)} \sum_{(i(1),j(1),\dots,i(m-1),j(m-1)) \in I(j(m)-1,m-1)} \prod_{r=1}^{m-1} h_{i(r),j(r)} \\
    & = D(k) - \sum_{j=1}^k h_{k,j} \bigg[ 1 + \sum_{m=2}^{k} (-1)^{m-1} \underbrace{\sum_{(i(1),j(1),\dots,i(m-1),j(m-1)) \in I(j-1,m-1)} \prod_{r=1}^{m-1} h_{i(r),j(r)}}_{=P(j-1,m-1)} \bigg] \\
    & = D(k) - \sum_{j=1}^k h_{k,j} \sum_{\ell=0}^{j-1} (-1)^\ell P(j-1,\ell) = D(k) - \sum_{j=1}^{k} h_{k,j} D(j) 
\end{align*}
where the third equality uses $P(j-1,0) = 1$, $P(j-1,m-1) = 0$ for $m > j$, and makes the index change $\ell = m-1$.
\end{proof}

\begin{lemma}
\label{lemma:coefficient-matrix-determinant-is-DN}
Let $\vM_N$ be as in \eqref{eqn:aNj-system-coefficient-matrix}. Then $\det \vM_N = D(N)$.
\end{lemma}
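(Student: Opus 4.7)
The plan is to reduce $\vM_N$ to an upper Hessenberg matrix via column operations, and then induct on the size using the recursion for $D(\cdot)$ established in \cref{lemma:Dk_recursion}.

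First, I would notice that $(\vM_N)_{j,k} = \delta_{j,k} - \sum_{i=\max(j,k)}^{N-1} h_{i,j}$ depends on $k$ only through the lower bound of the summation and the Kronecker delta. Consequently, for every row $j$, all entries in columns $k < j$ are equal. This means the determinant-preserving column operations $C_k \leftarrow C_k - C_{k+1}$, applied for $k=1,\dots,N-2$, collapse everything below the subdiagonal: a direct computation shows the resulting matrix $\vM_N'$ has entries $1 - h_{k,k}$ on the diagonal, $-1$ on the subdiagonal, $0$ below it, and $-h_{n,k}$ in row $k$ at column $n$ for $n > k$. In particular, $\vM_N'$ is upper Hessenberg and its leading $k \times k$ principal submatrix depends only on the $h_{i,j}$ with $i \leq k$, mirroring the structure of $\vM_{k+1}$ built from the same H-matrix entries.

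The main claim is then that $\Delta_k := \det \vM_N'[1{:}k,1{:}k]$ equals $D(k+1)$, which I would prove by induction on $k$. The base case $k=1$ gives $\Delta_1 = 1 - h_{1,1} = 1 - P(1,1;H) = D(2)$. For the inductive step, I would expand $\Delta_k$ along its last column. The diagonal term gives $(1 - h_{k,k}) \Delta_{k-1}$, and for each $j < k$ the entry $-h_{k,j}$ produces a cofactor $(-1)^{j+k} M_{j,k}$. The technical heart of the argument is to show $(-1)^{j+k} M_{j,k} = D(j)$, so that combining all contributions yields $\Delta_k = (1-h_{k,k}) D(k) - \sum_{j=1}^{k-1} h_{k,j} D(j)$, which matches the recursion $D(k+1) = (1-h_{k,k})D(k) - \sum_{j=1}^{k-1} h_{k,j} D(j)$ from \cref{lemma:Dk_recursion}.

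To prove this cofactor identity, I would analyze the block structure of the minor obtained by deleting row $j$ and column $k$. The lower-left block (rows $>j$, columns $<j$) vanishes because row $i > j$ has its leftmost nonzero entry at column $i-1 \geq j$, so columns $<j$ contribute only zeros. The lower-right block (rows $j+1,\dots,k$, columns $j,\dots,k-1$) turns out to be upper triangular with $-1$ on each diagonal position—the subdiagonal $-1$'s of $\vM_N'$ become the new diagonal after the column shift—giving a determinant of $(-1)^{k-j}$. The upper-left block is precisely $\vM_N'[1{:}j-1,1{:}j-1]$, whose determinant is $\Delta_{j-1} = D(j)$ by the inductive hypothesis. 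Multiplying these contributions with the sign $(-1)^{j+k}(-1)^{k-j} = (-1)^{2k} = 1$ yields $D(j)$, as claimed.

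Taking $k = N-1$ then gives $\det \vM_N = \Delta_{N-1} = D(N)$. The main obstacle I anticipate is the bookkeeping behind the block structure of $M_{j,k}$: tracking how the subdiagonal $-1$'s, the diagonal entries $1-h_{i,i}$, and the upper triangular entries $-h_{n,i}$ shift when row $j$ and column $k$ are deleted, and confirming that the lower-right block becomes upper triangular rather than merely Hessenberg. Everything else is a routine induction once the recursion from \cref{lemma:Dk_recursion} is in hand.
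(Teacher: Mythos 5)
Your proposal is correct and follows essentially the same route as the paper: the same column operations (yours in a single pass $C_k \leftarrow C_k - C_{k+1}$, the paper's in several sweeps, but producing the identical upper Hessenberg matrix $\vL_N$), followed by an induction that reduces to the recursion of \cref{lemma:Dk_recursion}. The only cosmetic difference is that you expand each leading principal minor along its last column and evaluate the cofactors via the block-triangular structure, whereas the paper expands along the last row and peels off terms recursively; both yield $\Delta_k = (1-h_{k,k})D(k) - \sum_{j<k} h_{k,j}D(j)$.
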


\begin{proof}
Performing the elementary column operation of subtracting the last column from all remaining columns of $\vM_N$, we obtain
\begin{align}
\label{eqn:MN-column-operation-first}
    \begin{bmatrix}
        1 - h_{1,1} - \dots - h_{N-2,1} & -h_{2,1} - \dots - h_{N-2,1} & \cdots & -h_{N-2,1} & -h_{N-1,1} \\
        -h_{2,2} - \dots - h_{N-2,2} & 1 - h_{2,2} - \dots - h_{N-2,2} & \cdots & -h_{N-2,2} & -h_{N-1,2} \\
        \vdots & \vdots & \ddots & \vdots & \vdots \\
        -h_{N-2,N-2} & -h_{N-2,N-2} & \cdots & 1 - h_{N-2,N-2} & -h_{N-1,N-2} \\
        -1 & -1 & \cdots & -1 & 1 - h_{N-1,N-1}
    \end{bmatrix} .
\end{align}
Now subtracting the second-to-last column of \eqref{eqn:MN-column-operation-first} from all columns up to the $(N-3)$-th column, we obtain
\begin{align*}
    \begin{bmatrix}
        1 - h_{1,1} - \dots - h_{N-3,1} & -h_{2,1} - \dots - h_{N-3,1} & \cdots & -h_{N-3,1} & -h_{N-2,1} & -h_{N-1,1} \\
        -h_{2,2} - \dots - h_{N-3,2} & 1 - h_{2,2} - \dots - h_{N-3,2} & \cdots & -h_{N-3,2} & -h_{N-2,2} & -h_{N-1,2} \\
        \vdots & \vdots & \ddots & \vdots & \vdots & \vdots \\
        -h_{N-3,N-3} & -h_{N-3,N-3} & \cdots & 1 - h_{N-3,N-3} & -h_{N-2,N-3} & -h_{N-1,N-3} \\
        -1 & -1 & \cdots & -1 & 1 - h_{N-2,N-2} & -h_{N-1,N-2} \\
        0 & 0 & \cdots & 0 & -1 & 1 - h_{N-1,N-1}
    \end{bmatrix} .
\end{align*}
Continuing until the end, we obtain the matrix
\begin{align*}
    \vL_N = \begin{bmatrix}
        1 - h_{1,1}& -h_{2,1} & \cdots & -h_{N-2,1} & -h_{N-1,1} \\
        -1 & 1 - h_{2,2} & \cdots & -h_{N-2,2} & -h_{N-1,2} \\
        & \ddots & \ddots & \vdots & \vdots \\
        & & \ddots & 1 - h_{N-2,N-2} & -h_{N-1,N-2} \\
        & & & -1 & 1 - h_{N-1,N-1}
    \end{bmatrix} .
\end{align*}
Now we use induction to show that $\det\vM_N = \det\vL_N = D(N)$.
The base case is $N=2$, when we have $\vL_2 = \begin{bmatrix} 1 - h_{1,1} \end{bmatrix}$ and thus, trivially, $\det \vL_2 = 1 - h_{1,1} = 1 - P(1,1) = D(2)$.
Now assuming that $\det \vL_k = D(k)$ for all $k=2,\dots,N-1$ and taking the cofactor expansion of $\vL_N$ along the last row gives
\begin{align*}
    \det\vL_N & = (1 - h_{N-1,N-1}) \det \begin{bmatrix}
        1 - h_{1,1}& -h_{2,1} & \cdots & -h_{N-2,1}\\
        -1 & 1 - h_{2,2} & \cdots & -h_{N-2,2}\\
        & \ddots & \ddots & \vdots & \\
        & & -1 & 1 - h_{N-2,N-2}
    \end{bmatrix} \\
    & \quad + \det \underbrace{\begin{bmatrix}
        1 - h_{1,1}& -h_{2,1} & \cdots & -h_{N-3,1} & -h_{N-1,1} \\
        -1 & 1 - h_{2,2} & \cdots & -h_{N-3,2} & -h_{N-1,2} \\
        & \ddots & \ddots & \vdots & \vdots \\
        & & \ddots & 1 - h_{N-3,N-3} & -h_{N-1,N-3} \\
        & & & -1 & -h_{N-1,N-2}
    \end{bmatrix}}_{\Tilde{\vL}_{N,N-1}} \\
    & = (1 - h_{N-1,N-1}) \det \vL_{N-1} + \det \Tilde{\vL}_{N,N-1} .
\end{align*}
Cofactor-expanding $\Tilde{\vL}_{N,N-1}$ along the last row again, we obtain
\begin{align*}
    \det \Tilde{\vL}_{N,N-1} = -h_{N-1,N-2} \det \vL_{N-2} + \det \underbrace{\begin{bmatrix}
        1 - h_{1,1}& -h_{2,1} & \cdots & -h_{N-4,1} & -h_{N-1,1} \\
        -1 & 1 - h_{2,2} & \cdots & -h_{N-4,2} & -h_{N-1,2} \\
        & \ddots & \ddots & \vdots & \vdots \\
        & & \ddots & 1 - h_{N-4,N-4} & -h_{N-1,N-4} \\
        & & & -1 & -h_{N-1,N-3}
    \end{bmatrix}}_{\Tilde{\vL}_{N,N-2}} .
\end{align*}
Repeating the process, we have
\begin{align*}
    \det \vL_N & = (1 - h_{N-1,N-1}) \det \vL_{N-1} - h_{N-1,N-2} \det \vL_{N-2} - \dots - h_{N-1,2} \det \vL_2 - h_{N-1,1} \\
    & = (1 - h_{N-1,N-1}) D(N-1) - \sum_{k=1}^{N-2} h_{N-1,k} D(k) \\
    & = D(N)
\end{align*}
where the second line uses the induction hypothesis and the last line follows from \cref{lemma:Dk_recursion}.
\end{proof}

\section{Proof of \cref{lemma:P-recursion}}
\label{section:proof-simple-P-recursion-lemma}

\begin{proof}
By definition of $I(k,m)$ defined in the proof of \cref{lemma:P_Q_identities}, we have
\begin{align}
\label{eqn:P_kp1_mp1_definition}
    P(k+1,m) = \sum_{(i(1), j(1), \dots, i(m), j(m)) \in I(k+1,m)} \prod_{r=1}^{m} h_{i(r),j(r)} .
\end{align}
Now for each $(i(1), j(1), \dots, i(m), j(m)) \in I(k+1,m)$ we have $j(m) \ge m$, and for each $n=m,\dots,k+1$,
\begin{align*}
    & (i(1), j(1), \dots, i(m-1), j(m-1), i(m), n) \in I(k+1,m) \\
    & \iff (i(1), j(1), \dots, i(m-1), j(m-1)) \in I(n-1, m-1) \,\,\, \text{and} \,\,\, n \le i(m) \le k+1 .
\end{align*}
Therefore, by partitioning $I(k+1,m)$ based on the value of $j(m)$, we can rewrite \eqref{eqn:P_kp1_mp1_definition} as
\begin{align*}
    P(k+1,m) & = \sum_{n=m}^{k+1} \sum_{i(m)=n}^{k+1} h_{i(m),n} \left( \sum_{(i(1), j(1), \dots, i(m-1), j(m-1)) \in I(n-1, m-1)} \prod_{r=1}^{m-1} h_{i(r),j(r)} \right) \\
    & = \sum_{n=m}^{k+1} \sum_{i=n}^{k+1} h_{i,n} P(n-1,m-1)
\end{align*}
where we replace the index $i(m)$ with $i$ and use the definition of $P(n-1,m-1)$.
The desired result then follows by applying the index change $j = n-1$.
\end{proof}

\section{Proofs for \cref{section:new-boundary-algorithms}}
\label{section:new-algorithm-proofs}

\subsection{Proof of \cref{proposition:OHM-linear-equation}}

We claim that $\lambda^\star_{j+2,j} = \cdots = \lambda^\star_{N,j} = 0$ is equivalent to the linear system
\begin{align}
\label{eqn:OHM-Q-linear-system}
    \underbrace{\begin{bmatrix}
        1 & -1 & \cdots & (-1)^{N-j-1} \\
        -\binom{2}{1} & \binom{3}{1} & \cdots & (-1)^{N-j} \binom{N-j+1}{1} \\
        \vdots & \vdots & & \vdots \\
        (-1)^{N-j-2} \binom{N-j-1}{N-j-2} & (-1)^{N-j-1} \binom{N-j}{N-j-2} & \cdots & -\binom{2(N-j-1)}{N-j-2}
    \end{bmatrix}}_{\in \reals^{(N-j-1) \times (N-j)}}
    \begin{bmatrix}
        Q(N-1,1,j) \\ Q(N-1,2,j) \\ \vdots \\ Q(N-1,N-j,j)
    \end{bmatrix} = 0 
\end{align}
having $Q(N-1,1,j), Q(N-1,2,j), \dots, Q(N-1,N-j,j)$ as variables.
First, observe that
\begin{align*}
    \lambda_{N,j}^\star = 0 \iff \sum_{\ell=1}^{N-j} (-1)^{\ell-1} Q(N-1,\ell,j) = 0 
\end{align*}
is already a linear equation in $Q(N-1,\ell,j)$ for $\ell=1,\dots,N-j$, and agrees with the first row of \eqref{eqn:OHM-Q-linear-system}.
When $j=N-2$, this is the only equation that should be satisfied and our claim is proved. Also, in this case, that single equation is equivalent to $Q(N-1,1,N-2) = Q(N-1,2,N-2)$, which is consistent with the formula~\eqref{eqn:Q-relationship-formula-OHM}.

When $j<N-2$, we have
\begin{align}
\label{eqn:lambda-Nm1j-zero-to-linear-equation}
\begin{aligned}
    & \lambda_{N-1,j}^\star = NQ(N-1,1,N-1) \sum_{\ell=1}^{N-j} (-1)^{\ell} \binom{\ell+1}{1} Q(N-1,\ell,j) = 0 \\
    & \iff \sum_{\ell=1}^{N-j} (-1)^{\ell-1} \binom{\ell+1}{\ell} Q(N-1,\ell,j) = 0
\end{aligned}
\end{align}
because $Q(N-1,1,N-1) = h_{N-1,N-1} \ne 0$ (diagonal entries are nonzero under H-invariance).
This agrees with the second row of~\eqref{eqn:OHM-Q-linear-system}, and the claim is proved for the case $j=N-3$.

For $k=N-2,N-3,\dots,j+2$, assume that we have handled the equations $\lambda_{k+1,j}^\star = \cdots = \lambda_{N,j}^\star$ and obtained linear equations agreeing with \eqref{eqn:OHM-Q-linear-system} up to the $(N-k)$-th row.
Then, from the equation $\lambda_{k,j}^\star = 0$ we observe that
\begin{align}
\label{qn:lambda-kj-zero-to-linear-equation}
\begin{aligned}
    0 = \lambda_{k,j}^\star & = N \sum_{m=1}^{N-k} (-1)^m Q(N-1,m,k) \sum_{\ell=1}^{N-j} (-1)^{\ell-1} \binom{\ell+m}{m} Q(N-1,\ell,j) \\
    & = N (-1)^{N-k} Q(N-1,N-k,k) \sum_{\ell=1}^{N-j} (-1)^{\ell-1} \binom{\ell+N-k}{N-k} Q(N-1,\ell,j) \\
    & \quad + N \sum_{m=1}^{N-k-1} (-1)^m Q(N-1,m,k) \underbrace{\sum_{\ell=1}^{N-j} (-1)^{\ell-1} \binom{\ell+m}{m} Q(N-1,\ell,j)}_{=0} 
\end{aligned}
\end{align}
and $Q(N-1,N-k,k) = h_{k,k} h_{k+1,k+1} \cdots h_{N-1,N-1} \ne 0$. This implies
\begin{align*}
    \sum_{\ell=1}^{N-j} (-1)^{\ell-1} \binom{\ell+N-k}{N-k} Q(N-1,\ell,j) = 0 ,
\end{align*}
which agrees with the $(N-k+1)$-th row. 
This completes the induction and proves that our quadratic system can be reduced to \eqref{eqn:OHM-Q-linear-system}.
Conversely, if \eqref{eqn:OHM-Q-linear-system} holds, then every summation of the form $\sum_{\ell=1}^{N-j} (-1)^{\ell-1} \binom{\ell+m}{m} Q(N-1,\ell,j)$ vanishes for $m=1,\dots,N-j-2$, which immediately implies $\lambda^\star_{j+2,j} = \cdots = \lambda^\star_{N,j} = 0$.

Now, to characterize the solution, we perform Gauss-Jordan elimination on \eqref{eqn:OHM-Q-linear-system}.
By repeating the sequence of elementary row operations $R_{m+1} \to R_{m+1} + R_m$ multiple times and applying the identity $\binom{m+1}{\ell+1} - \binom{m}{\ell} = \binom{m}{\ell+1}$, until the coefficient matrix becomes upper trapezoidal, \eqref{eqn:OHM-Q-linear-system} simplifies to
\begin{align}
\label{eqn:OHM-Q-linear-system-Gaussian-elimination}
    \begin{bmatrix}
        1 & -1 & 1 & \cdots & (-1)^{N-j-3} & (-1)^{N-j-2} & (-1)^{N-j-1} \\
        0 & \binom{1}{1} & -\binom{2}{1} & \cdots & (-1)^{N-j-4} \binom{N-j-3}{1} & (-1)^{N-j-3} \binom{N-j-2}{1} & (-1)^{N-j-2} \binom{N-j-1}{1} \\
        \vdots & \vdots & \vdots &  & \vdots & \vdots & \vdots \\
        0 & 0 & 0 & \cdots & \binom{N-j-3}{N-j-3} = 1 & -\binom{N-j-2}{N-j-3} & \binom{N-j-1}{N-j-3} \\
        0 & 0 & 0 & \cdots & 0 & \binom{N-j-2}{N-j-2} = 1 & -\binom{N-j-1}{N-j-2}
    \end{bmatrix} .
\end{align}
As the matrix \eqref{eqn:OHM-Q-linear-system-Gaussian-elimination} has the full rank of $N-j-1$, its null space is one-dimensional. Now plugging $a=N-j-1$, $b=-k$, $c=N-j-k$ into \cref{lemma:chu-vandermonde}, using $\binom{N-j-1}{i} = \binom{N-j-1}{N-j-i-1}$, $\binom{-k}{N-j-k-i} = (-1)^{N-j-k-i} \binom{N-j-i-1}{N-j-i-k} = (-1)^{N-j-k-i} \binom{N-j-i-1}{k-1}$ and making the change of index $\ell = N-j-i-1$, we obtain the identity
\begin{align*}
    \sum_{\ell=k-1}^{N-j-1} (-1)^{\ell-k+1} \binom{N-j-1}{\ell} \binom{\ell}{k-1} = 0 .
\end{align*}
Comparing this with \eqref{eqn:OHM-Q-linear-system-Gaussian-elimination}, we conclude that the one-dimensional solution of \eqref{eqn:OHM-Q-linear-system} is characterized by
\begin{align*}
    Q(N-1,k,j) = \binom{N-j-1}{k-1} Q(N-1,N-j,j)
\end{align*}
for $k=1,\dots,N-j-1$.

\subsection{Proof of Propositions~\ref{proposition:Dual-OHM-linear-equation} and \ref{proposition:Dual-OHM-is-that-algorithm-with-Qs}}

We proceed similarly as in the proof of \cref{proposition:OHM-linear-equation}, but with the difference that $\lambda_{N,j}^\star = N \sum_{\ell=1}^{N-j} (-1)^{\ell-1} Q(N-1,\ell,j) \ne 0$ and $\lambda_{j+1,j}^\star = 0$ instead. We claim that \eqref{eqn:sparsity-pattern-Dual-OHM} is equivalent to
\begin{align}
\label{eqn:Dual-OHM-Q-linear-system}
    \underbrace{\begin{bmatrix}
        -\binom{2}{1} & \binom{3}{1} & \cdots & (-1)^{N-j} \binom{N-j+1}{1} \\
        \vdots & \vdots & & \vdots \\
        (-1)^{N-j-2} \binom{N-j-1}{N-j-2} & (-1)^{N-j-1} \binom{N-j}{N-j-2} & \cdots & -\binom{2(N-j-1)}{N-j-2} \\
        (-1)^{N-j-1} \binom{N-j}{N-j-1} & (-1)^{N-j} \binom{N-j+1}{N-j-1} & \cdots & \binom{2N-2j-1}{N-j-1}
    \end{bmatrix}}_{\in \reals^{(N-j-1) \times (N-j)}}
    \begin{bmatrix}
        Q(N-1,1,j) \\ Q(N-1,2,j) \\ \vdots \\ Q(N-1,N-j,j)
    \end{bmatrix} = 0 .
\end{align}
The first equation is derived in the exact the same way as in~\eqref{eqn:lambda-Nm1j-zero-to-linear-equation}, and the subsequent equations can be derived inductively as in~\eqref{qn:lambda-kj-zero-to-linear-equation}, with the only difference being that we also include the case $k=j+1$.
It is clear that \eqref{eqn:Dual-OHM-Q-linear-system} conversely implies $\lambda_{j+1,j}^\star = \cdots = \lambda_{N-1,j}^\star = 0$.

Again, to characterize the solution to \eqref{eqn:Dual-OHM-Q-linear-system}, we perform Gauss-Jordan elimination by repeating the sequential elementary row operations $R_{m+1} \to R_{m+1} + R_m$ and applying $\binom{m+1}{\ell+1} - \binom{m}{\ell} = \binom{m}{\ell+1}$.
However, unlike in the case of \cref{proposition:OHM-linear-equation}, our first row is not $\begin{bmatrix} 1 & -1 & \cdots & (-1)^{N-j-1} \end{bmatrix}$ and we cannot fully reduce \eqref{eqn:Dual-OHM-Q-linear-system} into an upper trapezoidal matrix with this process.
Instead, the result we obtain is the following matrix with nonzero subdiagonal entries (with some sign flipping):
\begin{align}
\label{eqn:Dual-OHM-Q-linear-system-Gaussian-elimination}
    \begin{bmatrix}
        \binom{2}{1} & -\binom{3}{1} & \binom{4}{1} & \cdots & (-1)^{N-j-1} \binom{N-j-1}{1} & (-1)^{N-j} \binom{N-j}{1} & (-1)^{N-j+1} \binom{N-j+1}{1} \\
        \binom{2}{2} & -\binom{3}{2} & \binom{4}{2} & \cdots & (-1)^{N-j-1} \binom{N-j-1}{2} & (-1)^{N-j} \binom{N-j}{2} & (-1)^{N-j+1} \binom{N-j+1}{2} \\
        \vdots & \vdots & \vdots &  & \vdots & \vdots & \vdots \\
        0 & 0 & 0 & \cdots & (-1)^{N-j-1} \binom{N-j-1}{N-j-2} & (-1)^{N-j} \binom{N-j}{N-j-2} & (-1)^{N-j+1} \binom{N-j+1}{N-j-2} \\
        0 & 0 & 0 & \cdots & (-1)^{N-j-1} \binom{N-j-1}{N-j-1} & (-1)^{N-j} \binom{N-j}{N-j-1} & (-1)^{N-j+1} \binom{N-j+1}{N-j-1}
    \end{bmatrix} .
\end{align}
Note that the $m$-th row entries of \eqref{eqn:Dual-OHM-Q-linear-system-Gaussian-elimination} can be written as $(-1)^{k+1} \binom{k+1}{m}$ where $k=1,\dots,N-j+1$ are column indices.
For the first row in particular, this coincides with $(-1)^{k+1} \frac{2}{k} \binom{k+1}{2}$.
Using the formula
\begin{align*}
    \binom{k+1}{m+1} - \frac{m}{m+1} \left[ \frac{m+1}{k} \binom{k+1}{m+1} \right] = \frac{k-m}{k} \binom{k+1}{m+1} = \frac{m+2}{k} \binom{k+1}{m+2} ,
\end{align*}
we inductively see that when we sequentially apply the row operations $R_{m+1} \to R_{m+1} - \frac{m}{m+1} R_m$ (in the order of $m=1,\dots,N-j-2$), the first nonzero entry of each $R_{m+1}$ gets eliminated, and the $(m,k)$-entry of the resulting matrix is $(-1)^{k+1} \frac{m+1}{k} \binom{k+1}{m+1}$:

\begin{align}
    \label{eqn:Dual-OHM-Q-linear-system-Gaussian-elimination-2}
    \begin{bmatrix}
        \frac{2}{1}\binom{2}{2} & -\frac{2}{2}\binom{3}{2} & \frac{2}{3}\binom{4}{2} & \cdots & (-1)^{N-j-1} \frac{2}{N-j-2} \binom{N-j-1}{2} & (-1)^{N-j} \frac{2}{N-j-1} \binom{N-j}{2} & (-1)^{N-j+1} \frac{2}{N-j} \binom{N-j+1}{2} \\
        0 & -\frac{3}{2}\binom{3}{3} & \frac{3}{3}\binom{4}{3} & \cdots & (-1)^{N-j-1} \frac{3}{N-j-2}\binom{N-j-1}{3} & (-1)^{N-j} \frac{3}{N-j-1}\binom{N-j}{3} & (-1)^{N-j+1} \frac{3}{N-j}\binom{N-j+1}{3} \\
        \vdots & \vdots & \vdots &  & \vdots & \vdots & \vdots \\
        0 & 0 & 0 & \cdots & (-1)^{N-j-1} \frac{N-j-1}{N-j-2} \binom{N-j-1}{N-j-1} & (-1)^{N-j} \frac{N-j-1}{N-j-1}\binom{N-j}{N-j-1} & (-1)^{N-j+1} \frac{N-j-1}{N-j} \binom{N-j+1}{N-j-1} \\
        0 & 0 & 0 & \cdots & 0 & (-1)^{N-j} \frac{N-j}{N-j-1} \binom{N-j}{N-j} & (-1)^{N-j+1} \frac{N-j}{N-j} \binom{N-j+1}{N-j}
    \end{bmatrix} .
\end{align}
This matrix has the full rank of $N-j-1$, so its null space is one-dimensional.
Finally, plugging $a=N-j$, $b=-\ell-1$, $c=N-j-\ell$ into \cref{lemma:chu-vandermonde}, making the change of index $k=N-j-i$ and using $\binom{-\ell-1}{k-\ell} = (-1)^{k-\ell} \binom{k}{k-\ell} = (-1)^{k-\ell} \binom{k}{\ell}$ we obtain the identity
\begin{align*}
    \sum_{k=\ell}^{N-j} (-1)^{k+1} \binom{N-j}{k} \binom{k}{\ell} = 0 .
\end{align*}
Comparing this with \eqref{eqn:Dual-OHM-Q-linear-system-Gaussian-elimination-2}, we conclude that the relations 
\begin{align*}
    Q(N-1,k,j) = \frac{N-j+1}{k+1}\binom{N-j-1}{k-1} Q(N-1,N-j,j) 
\end{align*} 
characterize the one-dimensional solution to~\eqref{eqn:Dual-OHM-Q-linear-system}. 

To verify the formula $Q(N-1,k,j) = \frac{1}{k+1} \binom{N-j-1}{k-1}$ of \cref{proposition:Dual-OHM-is-that-algorithm-with-Qs}, as it is immediate that they satisfy $Q(N-1,k,j) = \frac{N-j+1}{k+1}\binom{N-j-1}{k-1} Q(N-1,N-j,j) $, we only need to check the H-invariance~\eqref{eqn:H-invariance-Q-sum}. This is an immediate application of \cref{lemma:generalized-hockeystick} with $p=N-1$, $q=m-1$ and $r=0$:
\begin{align*}
    \sum_{j=1}^{N-m} Q(N-1,m,j) = \frac{1}{m+1} \sum_{j=1}^{N-m} \binom{N-j-1}{m-1} = \frac{1}{m+1} \left[ \binom{N}{m} - \binom{N-1}{m-1} \right] = \frac{1}{m+1} \binom{N-1}{m} = \frac{1}{N} \binom{N}{m+1} .
\end{align*}
Finally, the fact that Dual-OHM provides \textit{an} exact optimal $H$ with the sparsity pattern of \cref{proposition:Dual-OHM-linear-equation} \citep[Theorem~3.3]{YoonKimSuhRyu2024_optimal}, together with the discussion above showing that the sparsity pattern uniquely determines the Q-profile (and hence $H$ via the procedure of Section~\ref{subsubsection:re-deriving-OHM}), proves that Dual-OHM must be that corresponding algorithm.

\subsection{Proof of~\cref{proposition:self-dual-family-Q-functions}}

Observe that our proposed Q-profile
\begin{align*}
    Q(N-1,k,j) = \begin{cases}
        \frac{j}{N}\binom{N-j-1}{k-1} & \text{for } j=1,\dots,N'-1 \\
        \frac{N'(N-N'+1)}{N(k+1)}\binom{N-N'-1}{k-1} & \text{for } j=N' \\
        \frac{1}{k+1}\binom{N-j-1}{k-1} & \text{for } j=N'+1,\dots,N-1 .
    \end{cases}
\end{align*}
satisfies \eqref{eqn:new-algorithm-search-ckj} with
\begin{align*}
    c_{k,j} = \begin{cases}
        \binom{N-j-1}{k-1} & \text{if } j=1,\dots,N'-1 \\
        \frac{N-j+1}{k+1} \binom{N-j-1}{k-1} & \text{if } j=N',\dots,N-1 ,
    \end{cases}
\end{align*}
which implies that \eqref{eqn:sparsity-pattern-OHM} holds for $j=1,\dots,N'-1$ (by \cref{proposition:OHM-linear-equation}) and \eqref{eqn:sparsity-pattern-Dual-OHM} holds for $j=N',\dots,N-1$ (by \cref{proposition:Dual-OHM-linear-equation}).
To see that these values of $Q(N-1,\cdot,\cdot)$ indeed represent a valid optimal algorithm, it remains to check: \textbf{(i)} H-invariance condition \eqref{eqn:H-invariance-Q-sum} and \textbf{(ii)} positivity of the remaining H-certificates.
The uniqueness of this Q-profile follows from the fact that they are determined by the same procedure as described in Section~\ref{subsubsection:re-deriving-OHM}.

\begin{enumerate}
    \item[\textbf{(i)}] \textbf{H-invariance.} For $m\ge N-N'$, all values of $Q(N-1,m,j)$ ($j=1,\dots,N-m$) agree with those from OHM, and for them, \eqref{eqn:H-invariance-Q-sum} is already proved.
    For $m\le N-N'-1$, we have:
    \begin{align*}
        \sum_{j=1}^{N-m} Q(N-1,m,j) = \sum_{j=1}^{N'-1} \frac{j}{N} \binom{N-j-1}{m-1} + \frac{N'(N-N'+1)}{N(m+1)}\binom{N-N'-1}{m-1} + \sum_{j=N'+1}^{N-m} \frac{1}{m+1} \binom{N-j-1}{m-1} .
    \end{align*}
    Using \cref{lemma:simple-combinatorial-summations}, we can simplify the summations as
    \begin{align*}
        \sum_{j=1}^{N'-1} \frac{j}{N} \binom{N-j-1}{m-1} & = \frac{1}{N} \left( \binom{N}{m+1} - (N'-1)\binom{N-N'}{m} - \binom{N-N'+1}{m+1} \right) \\
        \sum_{j=N'+1}^{N-m} \frac{1}{m+1} \binom{N-j-1}{m-1} & = \frac{1}{m+1} \binom{N-N'-1}{m} 
    \end{align*}
    where the first line uses part (b) with $p=N-1$, $q=m-1$ and $s=N'-1$, and the second line uses part (a) with $p=N-N'-2$, $q=m-1$ and the change of index $j=N-i-1$.
    Plugging these back in and simplifying, we obtain $\sum_{j=1}^{N-m} Q(N-1,m,j) = \frac{1}{N}\binom{N}{m+1}$ as desired.

    \item[\textbf{(ii)}] \textbf{Positivity of remaining $\lambda^\star$.} Note that the values of $Q(N-1,k,j)$ agree with those from OHM for $j=1,\dots,N'-1$, which implies that the values of $\lambda^\star_{j+1,j}$ also agree with those from OHM for $j=1,\dots,N'-2$, and in particular, they are positive.
    Similarly, for $j=N'+1,\dots,N-1$, the values of $Q(N-1,k,j)$ agree with those from Dual-OHM, and thus $\lambda^\star_{N,j}$ are the same as those from Dual-OHM, hence positive.
    It only remains to check that $\lambda^\star_{N',N'-1}$ and $\lambda^\star_{N,N'}$ are positive.
    This can be computed directly as
    \begin{align*}
        \lambda^\star_{N,N'} & = N \sum_{m=1}^{N-N'} (-1)^{m-1} Q(N-1,m,N') = N \sum_{m=1}^{N-N'} (-1)^{m-1} \frac{N'(N-N'+1)}{N(m+1)} \binom{N-N'-1}{m-1} \\
        & = \frac{N'}{N-N'} \sum_{m=1}^{N-N'} (-1)^{m-1} m\binom{N-N'+1}{m+1} = \frac{N'}{N-N'} \left[ x \cdot \frac{d}{dx} (1-x)^{N-N'+1} - (1-x)^{N-N'+1} + 1 \right]_{x=1} \\
        & = \frac{N'}{N-N'} 
    \end{align*}
    and
    \begin{align*}
        \lambda^\star_{N',N'-1} & = N \sum_{m=1}^{N-N'} Q(N-1,m,N') \sum_{\ell=1}^{N-N'+1} (-1)^{\ell+m-1} \binom{\ell+m}{m}  Q(N-1,\ell,N'-1) \\
        & = N \sum_{m=1}^{N-N'} (-1)^{m-1} Q(N-1,m,N') \cdot \frac{N'-1}{N} \underbrace{\sum_{\ell=1}^{N-N'+1} (-1)^{\ell} \binom{\ell+m}{m} \binom{N-N'}{\ell-1}}_{=\begin{cases}0 & \text{if } m < N-N' \\ (-1)^{N-N'+1} & \text{if } m = N-N' \end{cases}} \\
        & = N \cdot Q(N-1,N-N',N') \cdot \frac{N'-1}{N} = N \cdot \frac{N'}{N} \cdot \frac{N'-1}{N} = \frac{N'(N'-1)}{N}
    \end{align*}
    where the simplification of the second line follows by plugging $a=N-N'$, $b=-m-1$, $c=N-N'+1$ into \cref{lemma:chu-vandermonde}, making the change of index $\ell = N-N'-i+1$ and using $\binom{-m-1}{\ell} = (-1)^\ell \binom{\ell+m}{\ell}$; the summation becomes $\binom{N-N'-m-1}{N-N'+1}$, which is $0$ for $m=1,\dots,N-N'-1$ and $\binom{-1}{N-N'+1} = (-1)^{N-N'+1}$ when $m=N-N'$.
    
\end{enumerate}

\subsection{Proof of~\cref{proposition:self-dual-family-H-matrix}}

As there is a one-to-one correspondence between the values of $Q(N-1,\cdot,\cdot)$ and $H$, it suffices to check that \eqref{eqn:H-matrix-self-dual-family} has the $Q$-values specified in \eqref{eqn:Q-formula-self-dual-family} by direct computation.
For $j = N'+1, \dots, N-1$, the entries $h_{k,j}$ of $H$ in \eqref{eqn:H-matrix-self-dual-family} agree with those of $H_{\text{Dual-OHM}}(N-1)$.
Because $Q(N-1,k,j)$'s with $j\ge N'+1$ only involve the entries within those last $N-N'-1$ columns, their values agree with those from Dual-OHM, which are precisely $\frac{1}{k+1}\binom{N-j-1}{k-1}$ as in \eqref{eqn:Q-formula-self-dual-family}.

Next, for $j=1,\dots,N'-1$ and $\ell=j+1,\dots,N$, we have
\begin{align}
\label{eqn:h-vertical-sum-self-dual-family}
    \sum_{i=j}^{\ell-1} h_{i,j} = \begin{cases}
        \frac{j}{\ell} & \text{if } \ell \le N' \\
        \frac{j}{N}    & \text{if } \ell \ge N'+1 .
    \end{cases}
\end{align}
In particular, $Q(N-1,1,j) = \sum_{i=j}^{N-1} h_{i,j} = \frac{j}{N}$ for $j=1,\dots,N'-1$, which agrees with \eqref{eqn:Q-formula-self-dual-family}.
Also we have
\begin{align*}
    Q(N-1,1,N') = h_{N',N'} + \sum_{k=N'+1}^{N-1} h_{k,N'} = \frac{N'(N-N')}{N} + \left(\frac{1}{N} - \frac{1}{N-N'}\right) \sum_{k=N'+1}^{N-1} (N-k) = \frac{N'(N-N'+1)}{2N} ,
\end{align*}
again consistent with \eqref{eqn:Q-formula-self-dual-family}.

Now we use induction on $m$ to show that the values of $Q(N-1,m,j)$ agree with those from \eqref{eqn:Q-formula-self-dual-family} for $j=1,\dots,N-m$.
The base case $m=1$ is proved by the argument above.
Now assume that this holds for some $1\le m \le N-2$.
We use the second recursion from \cref{lemma:P_Q_identities} to compute $Q(N-1,m+1,j)$ for $j \le N'-1$:
\begin{align*}
    Q(N-1,m+1,j) & = \sum_{\ell=j+1}^{N-m} Q(N-1,m,\ell) \sum_{i=j}^{\ell-1} h_{i,j} \\
    & = \sum_{\ell=j+1}^{N'} \frac{j}{\ell} Q(N-1,m,\ell)  + \sum_{\ell=N'+1}^{N-m} \frac{j}{N} Q(N-1,m,\ell) 
\end{align*}
where the second line uses \eqref{eqn:h-vertical-sum-self-dual-family}. Then using the induction hypothesis, we obtain
\begin{align*}
    Q(N-1,m+1,j) & = \sum_{\ell=j+1}^{N'-1} \frac{j}{\ell} \frac{\ell}{N} \binom{N-\ell-1}{m-1} + \frac{j}{N'} \frac{N'(N-N'+1)}{N(m+1)} \binom{N-N'-1}{m-1} + \frac{j}{N} \sum_{\ell=N'+1}^{N-m} \frac{1}{m+1} \binom{N-\ell-1}{m-1} \\
    & = \frac{j}{N} \sum_{\ell=j+1}^{N'-1} \binom{N-\ell-1}{m-1} + \frac{j(N-N'+1)}{N(m+1)} \binom{N-N'-1}{m-1} + \frac{j}{N} \frac{1}{m+1} \sum_{\ell=N'+1}^{N-m} \binom{N-\ell-1}{m-1} \\
    & = \frac{j}{N} \left[ \binom{N-j-1}{m} - \binom{N-N'}{m} \right] + \frac{j(N-N'+1)}{N(m+1)} \binom{N-N'-1}{m-1} + \frac{j}{N} \frac{1}{m+1} \binom{N-N'-1}{m} \\
    & = \frac{j}{N} \binom{N-j-1}{m} - \frac{j}{N} \underbrace{\left[ \binom{N-N'}{m} - \frac{N-N'+1}{m+1} \binom{N-N'-1}{m-1} - \frac{1}{m+1} \binom{N-N'-1}{m} \right]}_{=0} \\
    & = \frac{j}{N} \binom{N-j-1}{m} ,
\end{align*}
where the third line uses \cref{lemma:simple-combinatorial-summations}(a) and the second term in the fourth line becomes $0$ when we expand all binomial coefficients into explicit forms and simplify.

As we already know that $Q(N-1,m+1,j)$ agrees with \eqref{eqn:Q-formula-self-dual-family} for $j>N'$, the final step needed to complete the induction step is to show that \eqref{eqn:Q-formula-self-dual-family} holds true for $Q(N-1,m+1,N')$.
For this, we again use \cref{lemma:P_Q_identities}, but switch the order of summations:
\begin{align*}
    Q(N-1,m+1,N') & = \sum_{\ell=N'+1}^{N-m} Q(N-1,m,\ell) \sum_{i=N'}^{\ell-1} h_{i,N'} \\
    & = \sum_{i=N'}^{N-m-1} h_{i,N'} \sum_{\ell=i+1}^{N-m} Q(N-1,m,\ell) \\
    & = h_{N',N'} \sum_{\ell=N'+1}^{N-m} Q(N-1,m,\ell) - \sum_{i=N'+1}^{N-m-1} (N-i) \left( \frac{1}{N-N'} - \frac{1}{N} \right) \sum_{\ell=i+1}^{N-m} Q(N-1,m,\ell) \\
    & = \frac{N'(N-N')}{N} \sum_{\ell=N'+1}^{N-m} \frac{1}{m+1} \binom{N-\ell-1}{m-1} - \sum_{i=N'+1}^{N-m-1} (N-i) \left( \frac{1}{N-N'} - \frac{1}{N} \right) \sum_{\ell=i+1}^{N-m} \frac{1}{m+1} \binom{N-\ell-1}{m-1} \\
    & = \frac{N'(N-N')}{N(m+1)} \binom{N-N'-1}{m} - \frac{N'}{N(N-N')(m+1)} \sum_{i=N'+1}^{N-m-1} (N-i) \binom{N-i-1}{m}
\end{align*}
where in the last line, we use \cref{lemma:simple-combinatorial-summations}(a) to simplify the summations over $\ell$.
Then we note that $\sum_{i=N'+1}^{N-m-1} (N-i) \binom{N-i-1}{m} = \sum_{i'=m}^{N-N'-2} (i'+1) \binom{i'}{m}$ (by index change $i'=N-i-1$), which can be simplified to $(m+1)\binom{N-N'}{m+2}$ using \cref{lemma:simple-combinatorial-summations}(c).
Plugging this back into the above expression we obtain
\begin{align*}
    Q(N-1,m+1,N') = \frac{N'(N-N')}{N(m+1)} \binom{N-N'-1}{m} - \frac{N'}{N(N-N')} \binom{N-N'}{m+2} = \frac{N'(N-N'+1)}{N(m+2)} \binom{N-N'-1}{m}
\end{align*}
where the last identity can be verified by expanding the binomial coefficients.
This completes the induction, and concludes the proof that \eqref{eqn:H-matrix-self-dual-family} is the correct $H$ with the desired sparsity pattern.

\subsection{Proof of \cref{proposition:second-mixture-family-Q-functions}}

Our proposed Q-function profile
\begin{align*}
     Q(N-1,k,j) = \begin{cases}
        \frac{1}{k+1} \binom{N-j-1}{k-1} & \text{for } j=1,\dots,N'-1 \\
        \frac{j-N'+1}{N-N'+1} \binom{N-j-1}{k-1} & \text{for } j=N',\dots,N-1 
    \end{cases}
\end{align*}
clearly satisfies \eqref{eqn:new-algorithm-search-ckj} with
\begin{align*}
    c_{k,j} = \begin{cases}
        \frac{N-j+1}{k+1} \binom{N-j-1}{k-1} & \text{for } j=1,\dots,N'-1 \\
         \binom{N-j-1}{k-1} & \text{for } j=N',\dots,N-1 .
    \end{cases}
\end{align*}
This implies that \eqref{eqn:sparsity-pattern-Dual-OHM} holds for $j=1,\dots,N'-1$ (by \cref{proposition:Dual-OHM-linear-equation}) and \eqref{eqn:sparsity-pattern-OHM} holds for $j=N',\dots,N-1$ (by \cref{proposition:OHM-linear-equation}).
Therefore, as before (with the procedure of Section~\ref{subsubsection:re-deriving-OHM} determining Q-profile), it only remains to check: \textbf{(i)} H-invariance condition \eqref{eqn:H-invariance-Q-sum} and \textbf{(ii)} positivity of the remaining H-certificates.

\begin{enumerate}
    \item[\textbf{(i)}] \textbf{H-invariance.} For each $m=1,\dots,N-1$,
    \begin{align*}
        \sum_{j=1}^{N-m} Q(N-1,m,j) & = \sum_{j=1}^{N'-1} \frac{1}{m+1} \binom{N-j-1}{m-1} + \sum_{j=N'}^{N-m} \frac{j-N'+1}{N-N'+1} \binom{N-j-1}{m-1} \\
        & = \frac{1}{m+1} \sum_{i=N-N'}^{N-2} \binom{i}{m-1} + \frac{1}{N-N'+1} \sum_{i=1}^{N-N'-m+1} i \binom{N-N'-i}{m-1} \\
        & = \frac{1}{m+1} \left[ \binom{N-1}{m} - \binom{N-N'}{m} \right] + \frac{1}{N-N'+1} \binom{N-N'+1}{m+1}
    \end{align*}
    where for the last equality, the first summation is simplified using \cref{lemma:simple-combinatorial-summations}(a) and the second summation uses \cref{lemma:simple-combinatorial-summations}(b).
    Finally, because $\frac{1}{m+1} \binom{N-1}{m} = \frac{1}{N} \binom{N}{m+1}$ and $\frac{1}{m+1} \binom{N-N'}{m} = \frac{1}{N-N'+1} \binom{N-N'+1}{m+1}$ we conclude that $\sum_{j=1}^{N-m} Q(N-1,m,j) = \frac{1}{N} \binom{N}{m+1}$ as desired.

    \item[\textbf{(ii)}] \textbf{Positivity of remaining $\lambda^\star$.} Because the values of $Q(N-1,k,j)$ for $j=1,\dots,N'-1$ agree with those of Dual-OHM, the values of $\lambda^\star_{N,j} = N \sum_{\ell=1}^{N-j} (-1)^{\ell-1} Q(N-1,\ell,j)$ also agree with those of Dual-OHM, and hence are positive.
    Next, $\lambda^\star_{N,N-1} = N\cdot Q(N-1,1,N-1) > 0$. 
    Finally, for $j=N',\dots,N-2$, we have
    \begin{align*}
        \lambda^\star_{j+1,j} & = N \sum_{m=1}^{N-j-1} (-1)^{m-1} Q(N-1,m,j+1) \sum_{\ell=1}^{N-j} (-1)^{\ell} \binom{\ell+m}{m}  Q(N-1,\ell,j) \\
        & = N \sum_{m=1}^{N-j-1} (-1)^{m-1} Q(N-1,m,j+1) \frac{j-N'+1}{N-N'+1} \underbrace{\sum_{\ell=1}^{N-j} (-1)^{\ell} \binom{\ell+m}{m} \binom{N-j-1}{\ell-1}}_{=\begin{cases}0 & \text{if } m < N-j-1 \\ (-1)^{N-j} & \text{if } m = N-j-1 \end{cases}} \\
        & = N \cdot Q(N-1,N-j-1,j+1) \cdot \frac{j-N'+1}{N-N'+1} = \frac{N(j-N'+2)(j-N'+1)}{(N-N'+1)^2} \\
        & > 0 
    \end{align*}
    where the simplification of the second line follows by plugging $a=N-j-1$, $b=-m-1$, $c=N-j$ into \cref{lemma:chu-vandermonde}, making the change of index $\ell = N-j-i$ and using $\binom{-m-1}{\ell} = (-1)^\ell \binom{\ell+m}{\ell}$; the summation becomes $\binom{N-j-m-2}{N-j}$, which is $0$ for $m=1,\dots,N-j-2$ and $\binom{-1}{N-j} = (-1)^{N-j}$ when $m=N-j-1$.
\end{enumerate}

\subsection{Proof of \cref{proposition:second-mixture-family-H-matrix}}

For $j = N',\dots,N-1$, $Q(N-1,k,j)$ depends only on the lower right block of $H$, which equals $H_{\text{OHM}}(N-N')$. 
The rule determining the terms within $Q(N-1,k,j)$ for \eqref{eqn:H-matrix-second-mixture-family} is the same as determining the terms within $Q(N-N',k,j-N'+1)$ for $H_{\text{OHM}}(N-N')$ (note the index shift in the last argument).
Therefore, we have
\begin{align*}
    Q(N-1,k,j; H) & = Q(N-N',k,j-N'+1; H_{\text{OHM}}(N-N')) \\
    & = \frac{j-N'+1}{N-N'+1} \binom{(N-N'+1)-(j-N'+1)-1}{k-1} \\
    & = \frac{j-N'+1}{N-N'+1} \binom{N-j-1}{k-1} 
\end{align*}
which agrees with \eqref{eqn:Q-formula-second-mixture-family} for $j=N',\dots,N-1$.

For $j=1,\dots,N'-1$, we first see that
\begin{align*}
    Q(N-1,1,j) & = h_{j,j} + \sum_{k=j+1}^{N'-1} h_{k,j} + \sum_{k=N'}^{N-1} h_{k,j} \\
    & = \frac{N-j}{N-j+1} - \sum_{k=j+1}^{N'-1} \frac{N-k}{(N-j)(N-j+1)} - \sum_{k=N'}^{N-1} \frac{N-N'+1}{2(N-j)(N-j+1)} \\
    & = \frac{N-j}{N-j+1} - \frac{1}{(N-j)(N-j+1)} \frac{(N'-j-1)(2N-N'-j)}{2} - \frac{(N-N'+1)(N-N')}{2(N-j)(N-j+1)} \\
    & = \frac{1}{2(N-j)(N-j+1)} \underbrace{\left[ 2(N-j)^2 - (N'-j-1)(2N-N'-j) - (N-N'+1)(N-N') \right]}_{=(N-j)(N-j+1)} \\
    & = \frac{1}{2} .
\end{align*}
Next, we use induction on $m=1,\dots,N-1$ to show that the values of $Q(N-1,m,j)$ agree with those from \eqref{eqn:Q-formula-second-mixture-family} for any $j=1,\dots,N-m$. 
(In fact, we only need to prove it for $j\le N'-1$.)
The case $m=1$ is proved by the above argument.
Suppose the statement holds for some $m=1,\dots,N-2$ and fix any $j\le \min\{N'-1, N-m-1\}$.
We use the recursion of $Q$-functions in \cref{lemma:P_Q_identities} with the order of summations switched:
\begin{align}
\label{eqn:second-mixture-family-Q-identity-order-switched}
    Q(N-1,m+1,j) & = \sum_{\ell=j+1}^{N-m} Q(N-1,m,\ell) \sum_{i=j}^{\ell-1} h_{i,j} = \sum_{i=j}^{N-m-1} h_{i,j} \sum_{\ell=i+1}^{N-m} Q(N-1,m,\ell) .
\end{align}
Given the induction hypothesis, when $i\ge N'-1$, the inner summation can be simplified as
\begin{align}
    \sum_{\ell=i+1}^{N-m} Q(N-1,m,\ell) & = \sum_{\ell=i+1}^{N-m} \frac{\ell-N'+1}{N-N'+1} \binom{N-\ell-1}{m-1} \nonumber \\
    & = \frac{1}{N-N'+1} \sum_{j=i-N'+2}^{N-N'-m+1} j\binom{N-N'-j}{m-1} \nonumber \\
    & = \frac{1}{N-N'+1} \left( (i-N'+1)\binom{N-i-1}{m} + \binom{N-i}{m+1} \right) 
    \label{eqn:second-mixture-family-Q-sum-large-indices}
\end{align}
where the second line follows from the change of index $j=\ell-N'+1$ and the third line uses \cref{lemma:simple-combinatorial-summations}(b) with $p=N-N'$, $q=m-1$ and $s=i-N'+1$.
When $i < N'-1$, we have
\begin{align}
    \sum_{\ell=i+1}^{N-m} Q(N-1,m,\ell) & = \sum_{\ell=i+1}^{N'-1} \frac{1}{m+1} \binom{N-\ell-1}{m-1} + \sum_{\ell=N'}^{N-m} \frac{\ell-N'+1}{N-N'+1} \binom{N-\ell-1}{m-1} \nonumber \\
    & = \frac{1}{m+1} \sum_{j=N-N'}^{N-i-2} \binom{j}{m-1} + \frac{1}{N-N'+1} \sum_{j=1}^{N-N'-m+1} j\binom{N-N'-j}{m-1} \nonumber \\
    & = \frac{1}{m+1} \left[ \binom{N-i-1}{m} - \binom{N-N'}{m} \right] + \frac{1}{N-N'+1} \binom{N-N'+1}{m+1} \label{eqn:second-mixture-family-Q-sum-small-indices} \\
    & = \frac{1}{m+1} \binom{N-i-1}{m} \nonumber
\end{align}
where the last identity holds because $\frac{1}{m+1} \binom{N-N'}{m} = \frac{1}{N-N'+1} \binom{N-N'+1}{m+1}$.
Note that \eqref{eqn:second-mixture-family-Q-sum-small-indices} yields the same result as \eqref{eqn:second-mixture-family-Q-sum-large-indices} for $i=N'-1$ so we can use \eqref{eqn:second-mixture-family-Q-sum-small-indices} even for $i=N'-1$.
Now plugging these identities into \eqref{eqn:second-mixture-family-Q-identity-order-switched} and using $h_{j,j} = \frac{N-j}{N-j+1}, h_{i,j} = -\frac{N-i}{(N-j)(N-j+1)}$ for $i=j+1,\dots,N'-1$ and $h_{i,j} = -\frac{N-N'+1}{2(N-j)(N-j+1)}$ for $i=N',\dots,N-1$, we obtain
\begin{align}
    Q(N-1,m+1,j) & = \frac{N-j}{N-j+1} \frac{1}{m+1} \binom{N-j-1}{m} - \sum_{i=j+1}^{N'-1} \frac{N-i}{(N-j)(N-j+1)} \frac{1}{m+1} \binom{N-i-1}{m} \nonumber \\
    & \quad - \sum_{i=N'}^{N-m-1} \frac{N-N'+1}{2(N-j)(N-j+1)} \frac{1}{N-N'+1} \left( (i-N'+1)\binom{N-i-1}{m} + \binom{N-i}{m+1} \right) \nonumber \\
    & \begin{aligned}
        & = \frac{N-j}{(N-j+1)(m+1)} \binom{N-j-1}{m} - \frac{1}{(N-j)(N-j+1)(m+1)} \underbrace{\sum_{i=j+1}^{N'-1} (N-i) \binom{N-i-1}{m}}_{(*)} \\
        & \quad - \frac{1}{2(N-j)(N-j+1)} \bigg( \underbrace{\sum_{i=N'}^{N-m-1} (i-N'+1)\binom{N-i-1}{m}}_{=\binom{N-N'+1}{m+2}} +  \underbrace{\sum_{i=N'}^{N-m-1} \binom{N-i}{m+1}}_{=\binom{N-N'+1}{m+2}} \bigg) 
    \end{aligned}
    \label{eqn:second-mixture-family-induction-key-step}
\end{align}
where the simplification of the last two terms respectively uses \cref{lemma:simple-combinatorial-summations}(b) and (a).
The summation $(*)$ can be simplified as follows, using the identity $(N-i)\binom{N-i-1}{m} = (m+1) \binom{N-i}{m+1}$:
\begin{align*}
   (*) = \sum_{i=j+1}^{N'-1} (m+1) \binom{N-i}{m+1} = (m+1) \sum_{\ell=N-N'+1}^{N-j-1} \binom{\ell}{m+1} = (m+1) \left[\binom{N-j}{m+2} - \binom{N-N'+1}{m+2} \right] 
\end{align*}
where we make the change of index $\ell=N-i$ and use \cref{lemma:simple-combinatorial-summations}(a).
Plugging this into \eqref{eqn:second-mixture-family-induction-key-step} and canceling out the terms, we obtain
\begin{align*}
    Q(N-1,m+1,j) & = \frac{N-j}{(N-j+1)(m+1)} \binom{N-j-1}{m} - \frac{1}{(N-j)(N-j+1)} \binom{N-j}{m+2} \\
    & = \frac{N-j}{(N-j+1)(m+1)} \binom{N-j-1}{m} - \frac{N-j-m-1}{(N-j+1)(m+1)(m+2)} \binom{N-j-1}{m} \\
    & = \frac{(m+2)(N-j) - (N-j-m-1)}{(N-j+1)(m+1)(m+2)} \binom{N-j-1}{m}  \\
    & = \frac{1}{m+2} \binom{N-j-1}{m} 
\end{align*}
which completes the induction, and thereby proves the correctness of \eqref{eqn:H-matrix-second-mixture-family}.

\end{document}